\newtheorem{theorem}{Theorem}[section]
\newtheorem{lemma}[theorem]{Lemma}
\newtheorem{proposition}[theorem]{Proposition}
\newtheorem{corollary}[theorem]{Corollary}
\theoremstyle{definition}
\newtheorem{definition}[theorem]{Definition}
\theoremstyle{remark}
\newtheorem{remark}[theorem]{Remark}
\numberwithin{equation}{section}
\newcommand{\R}{{\mathbb R}}
\newcommand{\N}{{\mathbb N}}
\newcommand{\Z}{{\mathbb Z}}
\newcommand{\ep}{\varepsilon}
\newcommand{\acknowledgments}{Acknowledgment}
\begin{document}

\title[Variational problem associated with the minimal wave speed]
{A variational problem associated with the minimal speed of traveling waves 
for spatially periodic KPP type equations}

\author{Ryunosuke Mori}
\address{Graduate School of Mathematical Sciences, 
University of Tokyo, Tokyo 153-8914, Japan }
\email{moriryu@ms.u-tokyo.ac.jp}

\author{Dongyuan Xiao}
\address{Graduate School of Mathematical Sciences, 
University of Tokyo, Tokyo 153-8914, Japan }
\email{xdyong@ms.u-tokyo.ac.jp}


\keywords{Variational problem, traveling wave, monostable, Nadin's formula}

\begin{abstract}
We consider a variational problem associated with the minimal speed of
pulsating traveling waves of the equation $u_t=u_{xx}+b(x)(1-u)u$, $x\in\R,\ t>0$,
where the coefficient $b(x)$ is nonnegative and periodic in $x\in\R$ with a period $L>0$.
It is known that there exists a quantity $c^*(b)>0$ such that a pulsating traveling wave with the average speed $c>0$ exists if and only if $c\geq c^*(b)$. The quantity $c^*(b)$ is the so-called minimal speed of
pulsating traveling waves.

In this paper, we study the problem of maximizing $c^*(b)$ by varying the coefficient $b(x)$
under some constraints. We prove the existence of the maximizer under a certain assumption of
the constraint and derive the Euler--Lagrange equation which the maximizer satisfies
under $L^2$ constraint $\int_0^L b(x)^2dx=\beta$.
The limit problems of the solution of this Euler--Lagrange equation as $L\rightarrow0$ and
as $\beta\rightarrow0$ are also considered. Moreover, we also consider the variational problem in a certain class of step functions under $L^p$ constraint $\int_0^L b(x)^pdx=\beta$ when $L$ or $\beta$
tends to infinity.
\end{abstract}

\maketitle

\tableofcontents

\label{intro}

We consider the following variational problem:
\begin{equation}\label{eq:maximize}
\begin{split}
\underset{b\in A_f}{\rm Maximize}\ c^*(b),
\end{split}
\end{equation}
where $c^*(b)$ is the minimal speed of pulsating traveling waves for the equation
\[
u_t=u_{xx}+b(x)(1-u)u\ \ (x\in\R,\ t>0)
\]
and
\[
A_f=A_{f,L,\beta}:=\Big\{b\in L^1_{\rm loc}(\R)\,\Big|\,
b(\cdot+L)=b(\cdot),\ b\geq0,\ \frac{1}{L}\int_0^L f(b)dx=\beta\Big\}.
\]

The Reaction-diffusion equation
\begin{equation}\label{eq}
u_t=u_{xx}+g(x,u)
\end{equation}
appears in vast fields of natural sciences including combustion physics, chemical kinetics, biology and so on.

Early in 1937, Fisher \cite{F} and Kolmogorov, Petrovsky and Piskunov \cite{KPP} firstly introduced this model in study the propagation of dominant gene in homogeneous environment. In this paper, we focus on the ecological modeling, which has been first introduced by Skellam \cite{S} in 1951. From the ecological point of view, pulsating traveling waves have been widely used to describe the spatial spread of the territory of a certain species.
In particular, it describes the invasion of the alien species in a given habitat, for example: plants, algae, epidemic agents and so on.
In 1986, Shigesada, Kawasaki and Teramoto \cite{SKT} extended the model (SKT model) naturally for ecological invasions in spatially periodic environments.
More precisely, they considered the heterogeneous environment consisting of two kinds of patches, favorable and less favorable, which are alternately arranged in an infinitely long habitat. In their framework, the reaction term $g(x,u)$ is considered as $b(x)u(1-u)$, where $u$ represents a population density of species, while $b(x)$ respectively correspond to the intrinsic growth rate.
However, from the point of  mathematics, their analysis was partly unrigorous as it relied on formal asymptotic behaviors of the traveling waves near the leading edge.

The first rigorous mathematical result in multi-dimensional homogeneous environment has been given by Aronson and Weinberger \cite{AW}. For the spatially periodic equations, the spreading properties of the solutions has been studied in the work of G$\ddot{\rm a}$rtner and Freidlin \cite{GF}. In particular, if $g$ is positive between $0$ and $1$, $g(x,1)=0$ for all $x$ and $g(x,s)\le g_u(x,0)s$ for all $(x,s)$, there exists a spreading speed $w^*\ge 0$, such that, if the given initial data is non-trivial, nonnegative and compactly supported. For the observers who move slower than $w^*$ will see the population density approaches to a positive state, however, for those who move faster than $w^*$ will see the population density approaches to $0$.

The work of \cite{SKT} has been extended by Berestycki, Hamel and Roques \cite{BHR2} by dealing with a more general equation $u_t=\triangledown\cdot((A(x)\triangledown u))+g(x,u)$ in $\mathbb{R}^n$ with spatially periodic inhomogeneity.
They proved that, under certain assumptions on the coefficients, there exists a constant $c^*>0$, the so-called minimal speed, such that the equation admits pulsating traveling waves with speed $c$ if and only if $c\ge c^*$.
Moreover,
they showed that the minimal speed $c^*$ can be characterized as follows:
\begin{equation}\label{eq:characterization of c^*}
c^*=\min\{c>0\ |\ \exists\lambda>0 \ such\ that\ k(\lambda,b)=-\lambda c\}=\min\limits_{\lambda>0}\frac{-k(\lambda,b)}{\lambda},
\end{equation}
where $b(x)=g_u(x,0)$ and the quantity $k(\lambda,b)$ is the principal eigenvalue $k(\lambda,b)$ of an operator $-L_{\lambda,b}$ defined as follows:
\begin{equation}\label{eq:elliptic operator}
-L_{\lambda,b}\psi=-\frac{d^2}{dx^2}\psi-2\lambda\frac{d}{dx}\psi-(b(x)+\lambda^2)\psi\ \ (x\in\mathbb{R}/L\mathbb{Z}).
\end{equation}

However, when we consider an invading species, it is not only interesting to know whether the species can survive, but also how it survives.
In fact, for a large class of KPP nonlinearities, the minimal speed $c^*$ coincides with $w^*$. Weinberger \cite{W} has studied this property in general multi-dimensional case within the framework of discrete dynamical systems.

In the SKT model \cite{SKT}, the intrinsic growth rate $b(x)$ has been considered as a variant depending on the environmental parameter at the location $x$.
As well-known, the predator's growth rate relates to the density of preys, the levels of available nitrogen and
phosphorus are the most important factors in limiting water plant growth and so on.
In fact, the growth rate of some species are almost nonlinearly depending on the environmental parameter, such as the algae.
A. Dauta et.al \cite{experiment} performed some experiments to study the effect of temperature on the growth rate of the freshwater algae.
They measured the growth rates under different temperature condition($10$, $15$, $20$, $25$, $30^{\circ}C$). From the data of the experiment, it is not
difficult to find that the relationship between growth rate and temperature is nonlinear.

Liang, Lin and Matano \cite{LLM} extended the works of \cite{SKT} by dealing with the equation
\[
u_t=u_{xx}+\bar{b}(x)u(1-u),
\]
where $\bar{b}(x)$ is a nonnegative $L\mbox{-periodic}$ measure on $\mathbb{R}$ with $\bar{b}\not\equiv 0$.
They proved that the existence of the minimal speed still and also proved that the minimal speed coincides with  the spreading speed in this case.
Furthermore, they considered the variational problem of maximizing the minimal speed $c^*(\bar{b})$ under the constraint of the form $\int_0^L\bar{b}=\alpha L$,
where $\alpha>0$ is an given arbitrary constant.
The constraint they dealt with roughly means the average value of the environmental parameter is a given constant
when the growth rate is linearly depending on the environmental parameter.
They found out that the periodically arrayed Dirac's delta functions gives the fastest spreading speed.
Before long, Liang and Matano \cite{LM} extended the above-mentioned work to the two-dimensional stratified media.

Nadin \cite{N} studied the same variational problem by dealing with the effect of the Schwarz rearrangement on the minimal speed.
He proved that $c^*(b^*)\ge c^*(b)$, where $b^*$ is the Schwarz rearrangement of $b$. The definition of Schwartz rearrangement will be given in the section $3$.
Among other things, he found out a new characterization of the principal eigenvalue $k(\lambda,b)$ of the nonsymmetric operator \eqref{eq:elliptic operator} with the periodic boundary condition as follows:
\begin{equation}\label{Nf}
k(\lambda,b)=\min\limits_{\phi\in E_L}\Big\{\int_0^L\phi'^2-\int_0^Lb\phi^2-\frac{\lambda^2L^2}{\int_0^L\phi^{-2}}\Big\},
\end{equation}
where $E_L=\{\phi\ |\ \phi\in H^1_{\rm per},\int_0^L\phi^2=1\}$.
Nadin's formula was extended to the equations when $b(x)$ is a nonnegative measure by Liang and Matano \cite{LM}.
We can conclude from both \cite{LLM} and \cite{N} that the most concentrated $b(x)$ maximizes the minimal speed under the $L_1$ norm constraints.

In this paper, we study the case that the intrinsic growth rate $b(x)$ nonlinearly
depends on an environmental parameter.
The rest of the paper is organized as follows. In Section~2, we state our main results.
In Section~3 of the present paper, we compute the maximizer under the $L^{\infty}$ constraint $A_1$ which is defined as
\[
A_1:=\Big\{b\in L^1_{\rm loc}(\R)\,\Big|\,
b(\cdot+L)=b(\cdot),\ 0\leq b\leq h,\ \frac{1}{L}\int_0^L b dx=\beta\Big\}.
\]
We show that periodically arrayed step functions attains the maximum under this constraint.
In Section~4, we deal with more general constrain condition which is defined as
\[
A_f:=\Big\{b\in L^1_{\rm loc}(\R)\,\Big|\,
b(\cdot+L)=b(\cdot),\ b\geq0,\ \frac{1}{L}\int_0^L f(b)dx=\beta\Big\}.
\]
We prove that the maximizer exists when $f$ is an increasing function and
$\underset{u\rightarrow\infty}{\lim}\,f(b)/b=\infty$.
In Section~5, we derive the Euler-Lagrange equation which the maximizer satisfies when $f(b)=b^2$.
In Section~6, we study the local maximality of the constant $b$ by computing the second variation.
We note that the constant $b_0=\alpha$ is the minimizer of the minimal speed under $L^1$ constraint $\frac{1}{L}\int_0^L b dx=\alpha$.
In Section~7, we study the asymptotic analysis of the maximizers when the period $L$ or the mass $\beta$ is very small and we obtain that the maximizer converges to a constant as $L\rightarrow0$ or as $\beta\rightarrow0$.
We also study the asymptotic analysis of maximizers in the set which consists of periodically arrayed step functions $b$ under the constraint $\frac{1}{L}\int_0^L b^p dx=\beta$ when $L$ or $\beta$ is very large. We also obtain that the maximizer is concentrated as $L\rightarrow\infty$ or as $\beta\rightarrow\infty$.

\section{Main theorems}

\subsection{Existence of maximizer under general constraints}
For $L>0$, $\beta>0$ and a function $f$, we define the nonlinear constraint $A_f$ as follow:
\[
A_f=A_{f,L,\beta}:=\Big\{b\in L^1_{\rm loc}(\R)\,\Big|\,
b(\cdot+L)=b(\cdot),\ b\geq0,\ \frac{1}{L}\int_0^L f(b)dx=\beta\Big\},
\]
In order to state the existence result, we need to impose an assumption
We assume that the function $f\in C([0,\infty))$
is an increasing function such that
\begin{equation}\label{cd:superlinear}
\lim_{u\rightarrow\infty}\frac{f(b)}{b}=\infty.
\end{equation}
It is not difficult to check that $f(b)=b^p$ satisfies this assumption when $p>1$.
Based on this assumption, we get the following theorem about the existence of the maximizer.

\begin{theorem}
[Existence of the maximizer]
\label{th:existence}
The the functional $b\mapsto c^*(b)$ attains its maximum in $A_f$ provided that
$f$ is an increasing continuous function satisfying \eqref{cd:superlinear}.
\end{theorem}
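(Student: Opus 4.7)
\emph{Plan.} We use the direct method of the calculus of variations. Set $M := \sup_{b \in A_f} c^*(b)$ and pick a maximizing sequence $\{b_n\} \subset A_f$. By Nadin's rearrangement inequality $c^*(b) \leq c^*(b^*)$ (where $b^*$ is the Schwarz rearrangement; see the introduction and \cite{N}), we may replace each $b_n$ by $b_n^*$: since $f$ is increasing, $\int_0^L f(b_n^*) = \int_0^L f(b_n) = L\beta$, so $b_n^* \in A_f$, and $c^*(b_n^*) \geq c^*(b_n) \to M$, so $\{b_n^*\}$ is also maximizing. Without loss of generality each $b_n$ is therefore symmetric decreasing on $[-L/2, L/2]$ (periodically extended).

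\emph{Compactness and membership in $A_f$.} Combining Nadin's formula \eqref{Nf} with the one-dimensional Sobolev embedding we obtain $c^*(b) \leq C(L, \|b\|_{L^p})$ for some $p > 1$; the superlinearity \eqref{cd:superlinear} and the constraint then bound $\|b\|_{L^p}$ uniformly on $A_f$, so $M < \infty$. Helly's selection theorem applied to the symmetric-decreasing functions $b_n$ yields a subsequence converging pointwise a.e. to some symmetric-decreasing $b_\infty \geq 0$. The de la Vall\'ee Poussin criterion applied with the superlinear $f$ gives equi-integrability of $\{b_n\}$ in $L^1(0, L)$, and Vitali's convergence theorem then upgrades a.e. convergence to strong convergence $b_n \to b_\infty$ in $L^1(0, L)$. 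By continuity of $f$ and Fatou's lemma, $\int_0^L f(b_\infty) \leq L\beta$; if strict, replace $b_\infty$ by $b_\infty + \delta$ for a suitable $\delta > 0$ restoring equality, which does not decrease $c^*$ by the monotonicity $b_1 \leq b_2 \Rightarrow c^*(b_1) \leq c^*(b_2)$ (inherited from the monotonicity of $k(\lambda, b)$ in $b$). So without loss $b_\infty \in A_f$.

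\emph{Continuity of $k(\lambda,\cdot)$ and conclusion.} For each fixed $\lambda > 0$, strong $L^1$-convergence $b_n \to b_\infty$ implies $k(\lambda, b_n) \to k(\lambda, b_\infty)$; this follows from classical perturbation theory for principal eigenvalues, or directly via \eqref{Nf} together with uniform $H^1$-bounds on the principal eigenfunctions derived from the identity $\|\psi_n'\|_{L^2}^2 = k(\lambda, b_n) + \int_0^L b_n \psi_n^2 + \lambda^2$ (testing the eigenvalue equation against $\psi_n$), the upper bound $k(\lambda, b_n) \leq -\lambda^2$ from \eqref{Nf} with $\phi \equiv L^{-1/2}$, the Gagliardo--Nirenberg inequality $\|\psi_n\|_{L^\infty}^2 \leq C(\|\psi_n'\|_{L^2}+1)$, and the uniform $L^1$-bound on $b_n$. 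Consequently, for every $\lambda > 0$,
\[
\frac{-k(\lambda, b_\infty)}{\lambda} = \lim_{n \to \infty} \frac{-k(\lambda, b_n)}{\lambda} \geq \lim_{n \to \infty} c^*(b_n) = M,
\]
and taking the infimum over $\lambda$ yields $c^*(b_\infty) \geq M$. Since $b_\infty \in A_f$ the reverse inequality is trivial, so $c^*(b_\infty) = M$ as desired.

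\emph{Main obstacle.} The crucial technical step is the Schwarz rearrangement reduction: it replaces an (a priori only weakly compact) maximizing sequence by one of symmetric-decreasing functions, whose pointwise compactness from Helly's theorem, combined with equi-integrability from \eqref{cd:superlinear}, upgrades to strong $L^1$-convergence via Vitali. This sidesteps the difficulty that $A_f$ is not generally weakly closed when $f$ is non-convex, and reduces the continuity of $c^*$ in $b$ to the classical fact that $k(\lambda, b)$ depends continuously on $b$ in the $L^1$-norm topology.
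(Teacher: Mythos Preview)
Your proof follows essentially the same route as the paper's: Schwarz-rearrange a maximizing sequence, use the superlinearity hypothesis \eqref{cd:superlinear} for uniform integrability (the paper writes out the estimate directly rather than naming de la Vall\'ee Poussin), extract an $L^1$-convergent subsequence from the monotone structure, pass to the limit in $c^*$, and repair the constraint if Fatou's inequality is strict. The differences are cosmetic. For the continuity of $k(\lambda,\cdot)$ under $L^1$ convergence the paper simply invokes the weak$^*$-continuity result of \cite{LLM} (Proposition~\ref{pr:w^*-continuity}), while you sketch it via $H^1$-compactness of the principal eigenfunctions; your sketch is correct once one notes that the Gagliardo--Nirenberg bound gives a quadratic inequality $\|\psi_n'\|_2^2 \le C\|b_n\|_1(\|\psi_n'\|_2+1)$, closed by the uniform $L^1$ bound on $b_n$. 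For the constraint repair the paper uses $b_\epsilon := \max\{b_*,\epsilon\}$ together with the strict positivity of $\partial c^*(b)[\cdot]$ from Proposition~\ref{pr:derivative of c(b)} to force $\int_0^L f(b_*)=\beta L$ by contradiction, whereas your additive shift $b_\infty\mapsto b_\infty+\delta$ with the weak monotonicity $c^*(b_\infty+\delta)\ge c^*(b_\infty)$ produces \emph{a} maximizer without showing it coincides with the $L^1$-limit.

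One organisational slip: you fix the constraint \emph{before} the continuity step and then write ``without loss $b_\infty\in A_f$''. Read literally this is circular, since the continuity argument requires $b_n\to b_\infty$ in $L^1$, which is false for the shifted function $b_\infty+\delta$. The fix is trivial---first prove $c^*(b_\infty)\ge M$ via continuity, \emph{then} shift and invoke monotonicity to conclude $c^*(b_\infty+\delta)=M$ with $b_\infty+\delta\in A_f$---but the steps should be reordered.
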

\subsection{Maximizer under $L^1$ and $L^{\infty}$ constraints}
\begin{theorem}\label{th:existence L^1 L^{infty}}
Let $\alpha$ be any given constants such that $0<\alpha< h$
and set
\[
\begin{split}
A_1&:=\Big\{
b\in L^1_{L\mbox{\tiny-per}}\,\Big|\,
0\leq b\leq h,\ \int_0^L b dx=\alpha L
\Big\},\\
b_{1}(x)
&:=\sum_{k\in{\mathbb Z}}h\chi_{I_{k}}(x),\
I_{k}
:=\Big(k+\frac{1}{2}\Big)L
+\Big[-\frac{\alpha L}{2h},\frac{\alpha L}{2h}\Big].
\end{split}
\]
Then
\[
c^{*}(b_{1})=\max_{b\in A_{1}}c^{*}(b).
\]
Moreover, the maximizer of
$c^{*}(b)$
in
$A_{1}$
is unique up to translation in
$x$.
\end{theorem}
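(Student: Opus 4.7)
The plan is to combine the variational characterization \eqref{eq:characterization of c^*} with Nadin's formula \eqref{Nf}, the Schwarz rearrangement result of Nadin, and the bathtub principle. Fix any $b\in A_1$. It suffices to show $k(\lambda,b_1)\le k(\lambda,b)$ for every $\lambda>0$, since then pointwise comparison of $-k(\lambda,\cdot)/\lambda$ and a minimum over $\lambda$ yield $c^*(b_1)\ge c^*(b)$.

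First I would reduce to the symmetric decreasing case. Let $b^*$ denote the Schwarz rearrangement of $b$ about $L/2$. Then $b^*\in A_1$ (both the pointwise bound $0\le b^*\le h$ and the mean $\alpha$ are preserved by rearrangement), and by Nadin's rearrangement theorem $c^*(b^*)\ge c^*(b)$, so we may replace $b$ by $b^*$. Let $\phi\in E_L$ be the positive minimizer of \eqref{Nf} for $b^*$. I claim $\phi$ is itself symmetric and nonincreasing away from $L/2$: letting $\phi^\sharp$ denote its Schwarz rearrangement about $L/2$, the P\'olya--Szeg\H{o} inequality gives $\int_0^L(\phi^\sharp)'^2\le \int_0^L\phi'^2$, equimeasurability gives $\int_0^L(\phi^\sharp)^{-2}=\int_0^L\phi^{-2}$, and the Hardy--Littlewood inequality together with the fact that $b^*$ is already symmetric decreasing gives $\int_0^L b^*(\phi^\sharp)^2\ge \int_0^L b^*\phi^2$. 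Hence $\phi^\sharp$ is also a minimizer of \eqref{Nf} for $b^*$; simplicity of the principal eigenfunction then forces $\phi=\phi^\sharp$.

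Now I would apply the bathtub principle. Since $\phi^2$ is symmetric decreasing about $L/2$ and $b_1$ is precisely the member of $A_1$ that saturates the bound $h$ on the symmetric interval of length $\alpha L/h$ centered at $L/2$ (i.e.\ on the largest super-level sets of $\phi^2$), the bathtub principle yields $\int_0^L b^*\phi^2\le \int_0^L b_1\phi^2$. Using $\phi$ as a test function for $b_1$ in \eqref{Nf} then gives
\begin{align*}
k(\lambda,b_1) &\le \int_0^L \phi'^2-\int_0^L b_1\phi^2-\frac{\lambda^2 L^2}{\int_0^L \phi^{-2}} \\
&\le \int_0^L \phi'^2-\int_0^L b^*\phi^2-\frac{\lambda^2 L^2}{\int_0^L \phi^{-2}}=k(\lambda,b^*),
\end{align*}
completing the proof of $c^*(b_1)\ge c^*(b)$.

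For the uniqueness assertion, both the Schwarz step and the bathtub step must be upgraded to strict inequalities: if $c^*(b)=c^*(b_1)$, then $b$ must be a translate of $b^*$ (strict Nadin rearrangement) and $b^*$ must equal $b_1$ (strict bathtub). The hardest point is the strict bathtub step, which requires that the critical level set $\{\phi^2=t\}$ has Lebesgue measure zero; I would extract this from the Euler--Lagrange ODE for $\phi$ associated to \eqref{Nf}, a linear second-order ODE whose nontrivial solutions cannot be constant on any nondegenerate interval without being identically constant, contradicting the normalization $\int_0^L\phi^2=1$ together with the nontriviality of $b^*$.
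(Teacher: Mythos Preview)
Your argument follows the same path as the paper's: reduce to symmetric $b$ and $\phi$ via the Polya and Hardy--Littlewood inequalities (the paper rearranges $b$ and $\phi$ simultaneously in one step, you do it sequentially by first invoking Nadin's rearrangement theorem and then symmetrizing the minimizer $\phi$), and then apply what you call the bathtub principle, which is exactly what the paper establishes directly as \eqref{eq:maximality b_{1}}. For uniqueness, both proofs chase strict inequalities through the rearrangement step and the bathtub step.

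Two small corrections on your final paragraph. First, the Euler--Lagrange equation for $\phi$ coming from \eqref{Nf} is $-\phi''-C\phi^{-3}-(b^*+k)\phi=0$ (this is derived in the paper's proof of Theorem~\ref{th:Euler--Lagrange}); it is not linear. Second, with coefficients that are merely $L^\infty$, ODE uniqueness does not prevent $\phi$ from being constant on a proper subinterval: constancy of $\phi$ on $[a,a']$ only forces $b^*$ to equal the fixed value $-k-C\,c^{-4}$ a.e.\ on $[a,a']$, which is compatible with $b^*$ changing elsewhere. Also, the contradiction you cite with the normalization $\int_0^L\phi^2=1$ does not work, since $\phi\equiv 1/\sqrt L$ is normalized. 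A cleaner closure is: if $\phi\equiv 1/\sqrt L$, then $k(\lambda,b_2)=H(1/\sqrt L,\lambda,b_2)=-\lambda^2-\alpha$ for every $\lambda$, so $c^*(b_2)=2\sqrt\alpha$, which is the \emph{minimum} of $c^*$ over $A_1$ by Proposition~\ref{pr:minimizer in L^1} and is strictly below $c^*(b_1)$, contradicting maximality. This handles the globally-constant case; ruling out $\phi$ flat only near $l_*$ still needs a short extra argument using the monotonicity of $b^*$ on $[0,L/2]$. The paper itself glosses over this strict-bathtub issue entirely (it simply asserts the strict inequality ``by using similar argument''), so your instinct to flag it is well placed.
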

\begin{remark}
From Theorem \ref{th:existence L^1 L^{infty}}, we can see that the most concentrated function in $A_1$ attains the maximum of
$c^*$. This property corresponds to the results of \cite{LLM} and \cite{N}.
\end{remark}
\subsection{$L^2$ constraints and the Euler--Lagrange equation}
In general, how the maximizer can not be computed explicitly. Therefore, we need to derive the Euler--Lagrange equation to study to the properties of the maximizer.
\begin{theorem}
[Euler--Lagrange equation]
\label{th:Euler--Lagrange}
Let $b(x)$ be a maximizer of $c^*$ in $A_2$. Then $b$ satisfies
the Euler--Lagrange equation:
\begin{equation}\label{eq:EL equation}
\left\{
\begin{split}
&b''+4k b+3 b^2= C\ \ (x\in\R/L\Z),\\
&C=3\beta+\frac{4 k}{L}\int_0^Lb dx,\ k=k(\lambda(b),b).
\end{split}
\right.
\end{equation}
\end{theorem}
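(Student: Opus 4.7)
The plan is to compute the first variation of $c^*$ on $A_2$ from the variational characterizations of $c^*$ and $k$, obtain a pointwise Lagrange identity between the Nadin minimizer $\phi^*$ and the maximizer $b$, and then combine that identity with the Euler--Lagrange equation of Nadin's formula to derive an ODE in $b$ alone.

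First, I would write $c^*(b)=-k(\lambda^*,b)/\lambda^*$, where $\lambda^*=\lambda(b)$ realizes the minimum in \eqref{eq:characterization of c^*} and $\phi^*>0$ with $\int_0^L\phi^{*2}=1$ realizes the minimum in \eqref{Nf}. Since $\lambda^*$ is an interior critical point of $\lambda\mapsto -k(\lambda,b)/\lambda$ and $\phi^*$ is an interior critical point of the Nadin functional on $E_L$, applying the envelope theorem in both variables yields
\[
\delta_b c^*(b)[h]=\frac{1}{\lambda^*}\int_0^L\phi^*(x)^2\,h(x)\,dx.
\]
The Lagrange multiplier condition for the constraint $\frac{1}{L}\int_0^L b^2\,dx=\beta$ then gives a pointwise identity
$\phi^*(x)^2=\gamma\,b(x)$
on $\mathbb{R}/L\mathbb{Z}$ for some constant $\gamma>0$, under the assumption (to be justified) that the maximizer $b$ is strictly positive so that no KKT inequality intervenes.

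Next, the Euler--Lagrange equation of \eqref{Nf}, imposed via a multiplier against $\int\phi^2=1$ that evaluates to $k$, reads
\[
\phi^{*\prime\prime}+(b+k)\phi^*+A\phi^{*-3}=0,\qquad A:=\frac{\lambda^{*2}L^2}{\Lambda^2},\quad \Lambda:=\int_0^L\phi^{*-2}\,dx.
\]
Multiplying by $2\phi^{*\prime}$ and using $(b\phi^{*2})'=b'\phi^{*2}+b(\phi^{*2})'$ rewrites this as
\[
\frac{d}{dx}\!\left(\phi^{*\prime 2}+(b+k)\phi^{*2}-A\phi^{*-2}\right)=b'\phi^{*2}.
\]
Substituting $\phi^{*2}=\gamma b$ turns the right-hand side into $\gamma bb'=\tfrac{\gamma}{2}(b^2)'$, yielding a first integral; using $\phi^{*\prime 2}=\gamma(b')^2/(4b)$ (from differentiating $\phi^{*2}=\gamma b$) and multiplying through by $4b/\gamma$ gives
\[
(b')^2+2b^3+4kb^2-\frac{4A}{\gamma^2}=\frac{4E}{\gamma}\,b
\]
for some integration constant $E$. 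Differentiating once in $x$ and dividing by $2b'$ produces $b''+3b^2+4kb=C$ with $C=2E/\gamma$. Finally, integrating over $[0,L]$, using $\int_0^L b''\,dx=0$ by periodicity and the constraint $\tfrac{1}{L}\int_0^L b^2\,dx=\beta$, identifies $C=3\beta+\frac{4k}{L}\int_0^L b\,dx$, exactly as stated.

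The main obstacle is the analytic justification of the above envelope and Lagrange steps: one needs that $\lambda^*$ is a unique, nondegenerate interior minimizer depending smoothly on $b$; that the Nadin minimizer $\phi^*$ is unique up to sign, strictly positive and bounded away from $0$ so that $\Lambda<\infty$ and the differentiation in $\phi$ is valid; and, crucially, that the maximizer $b$ is strictly positive on $\mathbb{R}/L\mathbb{Z}$, so that the Lagrange relation $\phi^{*2}=\gamma b$ holds everywhere rather than only on $\{b>0\}$ with an inequality on the vanishing set. The ODE derivation itself is essentially algebraic once these regularity facts are secured.
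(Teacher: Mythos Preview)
Your proposal is correct and follows essentially the same route as the paper. The paper likewise obtains the pointwise relation $\phi^2=\gamma b$ (its Lemma~5.1 with $f(b)=b^2$), derives the second-order equation $-\phi''-C_2^{2}\phi^{-3}-(b+k)\phi=0$, substitutes $\phi=\sqrt{2b/C_3}$, passes to a first integral, and differentiates back to the $b$-ODE; the constant $C$ is identified by integrating over one period exactly as you do. The only differences are in the justification layer: the paper computes $\partial c^*(b)[v]=\lambda(b)^{-1}\langle\psi_b\widetilde\psi_b,v\rangle$ via resolvent perturbation of the eigenvalue problem and obtains the $\phi$-equation by writing $\psi=e^{\varphi}\phi$, $\widetilde\psi=e^{-\varphi}\phi$ and combining the two eigenvalue equations, whereas you get both directly from the Nadin functional via the envelope theorem; since $\phi_{\lambda,b}=\sqrt{\psi_{\lambda,b}\widetilde\psi_{\lambda,b}}$ these are the same object and the derivations coincide.

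On the KKT obstacle you flag: the paper's device is to extend $f$ evenly to $\R$ by $f(|b|)$ and note that, because $c^*(b)\le c^*(|b|)$, a maximizer in $A_f$ is also a maximizer in $\overline{A}_f=\{b:\int_0^L f(b)\,dx=\beta L,\ \int_0^L b\,dx>0\}$, where the sign constraint is dropped. The Lagrange identity is then an equality everywhere with no complementarity condition, and strict positivity of $b$ follows \emph{a posteriori} from $\phi^2=\gamma b$ and $\phi>0$. Adopting this trick removes the main gap you identify.
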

\begin{remark}From the general theory of
second order ordinary differential equations,
any nontrivial solution of
the above equation has a minimum period
$L/N$
for some
$N\in{\mathbb N}$ and
is symmetric with respect to its extreme point.
Hence, as a consequence of a corollary of Theorem 1.2 of \cite{N},
the equation \eqref{eq:EL equation} implies that
any non-trivial maximizer $b\in A_2$ satisfies
\[
b'(x)>0\ \ (x\in(a-L/2,a)),\ \
b(\cdot)=b(2a-\cdot)\ \ {\rm for\ some}\ \ a\in[0,L).
\]
\end{remark}
\subsection{Constant coefficients as local maximizers}
A direct result from Theorem \label{th:Euler--Lagrange} that the Euler--Lagrange equation obtains constant solution motivates us to check whether it is the local maximizer or not.
\begin{theorem}
[Local maximality of the constant intrinsic growth rate under $L^{\infty}$-perturbation]
\label{th:local maximality L^{infty}}
Let $f\in C^2((0,\infty))$ be a strictly increasing function with $f''(f^{-1}(\beta))>0$.
If
\[
D(\beta):=2f''(f^{-1}(\beta))(\pi^2/L^2+f^{-1}(\beta))-f'(f^{-1}(\beta))>0,
\]
then the constant function $b_0:=f^{-1}(\beta)$
is a local maximizer of $c^*$ in $A_{f,L,\beta}$. Namely,
there is an $\eta>0$ such that
\[
c^*(b_0)>c^*(b)\ \ {\rm for}\ \ b\in A_{f,L,\beta}\ \
{\rm with}\ \ 0<\|b_0-b\|_{L^{\infty}}\leq \eta.
\]
If $D(\beta)<0$,
then the constant function $b_0:=f^{-1}(\beta)$
is neither a local maximizer nor a local minimizer of $c^*$ in $A_{f,L,\beta}$.
\end{theorem}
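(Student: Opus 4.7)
The plan is to prove the theorem by a direct second-variation computation along admissible paths through $b_0$, combining Rayleigh--Schr\"odinger perturbation theory for the principal eigenvalue $k(\lambda, b)$ with the envelope theorem applied to the minimum over $\lambda$ in the characterization $c^*(b) = \min_{\lambda > 0} -k(\lambda, b)/\lambda$.

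First, I would parametrize admissible perturbations. For any $v_1 \in L^{\infty}(\R/L\Z)$ with $\int_0^L v_1\,dx = 0$, I construct a smooth curve $b_\varepsilon = b_0 + \varepsilon v_1 + \varepsilon^2 v_2 + O(\varepsilon^3)$ lying in $A_{f,L,\beta}$; expanding $\int_0^L f(b_\varepsilon)\,dx = \beta L$ to second order forces $f'(b_0)\int_0^L v_2\,dx + \tfrac{1}{2} f''(b_0)\int_0^L v_1^2\,dx = 0$, and the implicit function theorem (using $f \in C^2$ and $f'(b_0)>0$) supplies the required higher-order correction. Conversely, for any $b \in A_{f,L,\beta}$ with $\|b-b_0\|_{L^\infty} = \delta$ small, the rescaled direction $\tilde v := (b-b_0)/\delta$ has $\int_0^L \tilde v\,dx = O(\delta)$, so the analysis of $c^*(b) - c^*(b_0)$ reduces to that along such curves.

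Next, since $b_0$ is constant, the operator $-L_{\lambda, b_0}$ and its $L^2$-adjoint both admit $\psi_0 \equiv 1$ as principal eigenfunction with $k(\lambda, b_0) = -(\lambda^2 + b_0)$, so $c^*(b_0) = 2\sqrt{b_0}$ attained at $\lambda_0 = \sqrt{b_0}$. Standard Rayleigh--Schr\"odinger theory yields $k_1(\lambda) = -\tfrac{1}{L}\int_0^L v_1\,dx = 0$ independent of $\lambda$, so the minimizer $\lambda^*(\varepsilon)$ of $-k(\lambda, b_\varepsilon)/\lambda$ satisfies $\lambda^{*\prime}(0) = 0$ and the envelope theorem gives $\tfrac{d^2}{d\varepsilon^2} c^*(b_\varepsilon)\big|_0 = -2 k_2(\lambda_0)/\sqrt{b_0}$. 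Here $k_2(\lambda_0) = -\tfrac{1}{L}\int_0^L v_2\,dx - \tfrac{1}{L}\int_0^L v_1 \psi_1\,dx$, with $\psi_1$ the first-order eigenfunction correction solving $-\psi_1'' - 2\lambda_0 \psi_1' = v_1$, $\int_0^L \psi_1\,dx = 0$. Writing $v_1 = \sum_{n \ne 0} c_n e^{i\mu_n x}$ with $\mu_n = 2\pi n/L$, solving for $\psi_1$ mode by mode, applying Parseval, and substituting the second-order constraint yields
\[
k_2(\lambda_0) = \sum_{n > 0} |c_n|^2 \left(\frac{f''(b_0)}{f'(b_0)} - \frac{2}{\mu_n^2 + 4 b_0}\right).
\]

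Finally, I read off the sign. The bracket is monotonically increasing in $n$, so minimal at $n = 1$ where it equals $f''(b_0)/f'(b_0) - 1/\bigl(2(\pi^2/L^2 + b_0)\bigr)$, which has the same sign as $D(\beta)$. If $D(\beta) > 0$, every term in the sum is strictly positive, hence $k_2(\lambda_0) > 0$ for all nontrivial admissible $v_1$ and $b_0$ is a strict local $L^\infty$-maximizer. If $D(\beta) < 0$, the choice $v_1(x) = \cos(2\pi x/L)$ yields $k_2 < 0$ (so $c^*$ increases along this curve), while $v_1(x) = \cos(2\pi N x/L)$ with $N$ large makes the lone nonzero bracket approach $f''(b_0)/f'(b_0) > 0$ (using the assumption $f''(b_0) > 0$), giving $k_2 > 0$ and thus $c^*$ decreases; hence $b_0$ is neither a local maximizer nor a local minimizer. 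The main technical obstacle is the simultaneous $\varepsilon^2$-bookkeeping of the nonlinear constraint and the rigorous application of the envelope argument for the minimum over $\lambda$; once these are handled, what remains is routine Fourier analysis of a constant-coefficient linear ODE.
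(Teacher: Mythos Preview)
Your proposal is correct and follows essentially the same strategy as the paper: a second-variation computation of $c^*$ at the constant $b_0$, reduced via Fourier expansion to a sum over modes whose $n=1$ term has the sign of $D(\beta)$. The presentational differences are minor: the paper parametrizes the constraint via the multiplicative projection $P(b)=\mu(b)b$ (Lemma~\ref{lm:projection}) and differentiates the closed formula $\partial c^*(b)[v]=\lambda(b)^{-1}\langle\psi_b\widetilde\psi_b,v\rangle$ (Proposition~\ref{pr:derivative of c(b)}), whereas you build a second-order curve $b_\varepsilon$ directly and invoke the envelope theorem plus Rayleigh--Schr\"odinger; but the envelope step is exactly the paper's observation that $\lambda(b)\partial_\lambda k-k$ vanishes at $\lambda(b_0)$, and your $k_2$ computation is mode-by-mode identical to Lemma~\ref{lm:partial^2_b k}. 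The one place where the paper is more explicit is in showing that \emph{every} nearby $b\in A_f$ lies in the image of the projection (so the curve-wise analysis really covers an $L^\infty$-neighborhood); your ``conversely'' sentence gestures at this but would need to be fleshed out, and the paper's scaling map $b\mapsto\mu(b)b$ is a clean way to do it.
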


As a corollary, we can compute the condition whether the constant is a local maximizer when $f(b)=b^P$.
\begin{corollary}[$L^p$-constraints]
\label{cr:L^p}

The constant function $b_0:=\beta^{1/p}$
is a local maximizer of $c^*$ in $A_{p,L,\beta}$, more precisely,
there exists an $\eta>0$ such that
\[
c^*(b_0)>c^*(b)\ \ {\rm for\ any}\ \ b\in A_{p,L,\beta}\ \
{\rm with}\ \ 0<\|b_0-b\|_{L^{\infty}}\leq \eta.
\]
if $p\geq 3/2$ or
\[
1<p<3/2\ \ \mbox{and}\ \ \beta^{1/p}<\frac{2(p-1)\pi^2}{(3p-2)L^2}.
\]
Otherwise, the constant function $b_0:=\beta^{1/p}$
is neither a local maximizer nor a local minimizer of $c^*$ in $A_{p,L,\beta}$
if

\[
1<p<3/2\ \ \mbox{and}\ \ \beta^{1/p}>\frac{2(p-1)\pi^2}{(3-2p)L^2},
\]
\end{corollary}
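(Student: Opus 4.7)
The plan is to apply Theorem \ref{th:local maximality L^{infty}} directly with $f(b)=b^{p}$ and reduce the corollary to a sign analysis of the discriminant $D(\beta)$ defined there. Since $f$ is $C^{2}$ and strictly increasing on $(0,\infty)$ for $p>1$, the hypotheses of Theorem \ref{th:local maximality L^{infty}} are satisfied as soon as one checks $f''(f^{-1}(\beta))>0$.

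First I would compute the quantities appearing in $D(\beta)$. From $f(b)=b^{p}$ one obtains $f^{-1}(\beta)=\beta^{1/p}$, $f'(f^{-1}(\beta))=p\,\beta^{(p-1)/p}$, and $f''(f^{-1}(\beta))=p(p-1)\,\beta^{(p-2)/p}$. In particular $f''(f^{-1}(\beta))>0$ for every $p>1$, so the dichotomy of Theorem \ref{th:local maximality L^{infty}} applies.

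Substituting these expressions into the definition of $D(\beta)$ and factoring out the (positive) common constant $p\,\beta^{(p-2)/p}$, I would rewrite
\[
D(\beta)=p\,\beta^{(p-2)/p}\!\left[\frac{2(p-1)\pi^{2}}{L^{2}}+(2p-3)\,\beta^{1/p}\right],
\]
so the sign of $D(\beta)$ is the sign of the bracket. The remaining argument is a clean two-case analysis. If $p\ge 3/2$ then $2p-3\ge 0$ and $2(p-1)>0$, so the bracket is strictly positive and $b_{0}=\beta^{1/p}$ is a local maximizer by Theorem \ref{th:local maximality L^{infty}}. If $1<p<3/2$ the coefficient $2p-3$ is negative, and the bracket is positive precisely when $\beta^{1/p}<2(p-1)\pi^{2}/((3-2p)L^{2})$ (yielding a local maximizer) and negative when the reverse strict inequality holds (yielding neither a local maximizer nor a local minimizer, again by Theorem \ref{th:local maximality L^{infty}}).

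I do not foresee any substantive obstacle: the argument is a direct specialization of the previous theorem, and the only place where care is needed is the bookkeeping of the sign of $2p-3$ and matching the resulting threshold $2(p-1)\pi^{2}/((3-2p)L^{2})$ to the conditions stated in the corollary.
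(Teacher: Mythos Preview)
Your proposal is correct and follows exactly the same route as the paper: specialize Theorem~\ref{th:local maximality L^{infty}} to $f(b)=b^{p}$, compute $D(\beta)=p\,\beta^{(p-2)/p}\bigl[2(p-1)\pi^{2}/L^{2}+(2p-3)\beta^{1/p}\bigr]$, and read off the sign of the bracket according to whether $p\ge 3/2$ or $1<p<3/2$. Your use of the denominator $3-2p$ (rather than the $3p-2$ appearing once in the corollary's statement) is the right one, consistent with the computation and with the second displayed condition.
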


\begin{theorem}
[Local maximality of the constant intrinsic growth rate under $L^2$-perturbation]
\label{th:local minimality L^2}
The constant function $b_0:=\sqrt{\beta}$ is a local maximizer of $c^*$ in $A_2$,
more precisely,
there exists an $\eta>0$ such that
\[
c^*(b_0)>c^*(b)\ \ {\rm for}\ \ b\in A_2\ \
{\rm with\ any}\ \ 0<\|b_0-b\|_{L^{2}([0,L])}<\eta.
\]
\end{theorem}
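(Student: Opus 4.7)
I plan to refine the $L^\infty$-local maximality already obtained (Corollary~\ref{cr:L^p} with $p=2$) to the weaker $L^2$-topology by expanding $k(\lambda,b)$ about $b=b_0$ to second order in the perturbation $v=b-b_0$ and then exploiting the quadratic $L^2$-constraint. Since
\[
c^*(b)\;\le\;\frac{-k(\lambda_*,b)}{\lambda_*}\quad\text{for any fixed }\lambda_*>0,
\]
and the minimum $c^*(b_0)=2\sqrt{b_0}$ is attained at $\lambda_*=\sqrt{b_0}$, it is enough to show the strict inequality $-k(\lambda_*,b_0+v)<2b_0$ for every admissible $v$ with $0<\|v\|_{L^2}\le\eta$.

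The eigenvalues of $-L_{\lambda_*,b_0}$ on $\R/L\Z$ are $k_n=(\xi_n-i\lambda_*)^2-b_0$, attained on the plane waves $\psi_n=e^{i\xi_n x}$ with $\xi_n:=2\pi n/L$; in particular the principal eigenvalue $k_0=-2b_0$ is simple with a spectral gap of order $1/L$. Writing $v=\sum_n\hat v_n e^{i\xi_n x}$ and treating multiplication by $-v$ as a perturbation of the operator, non-self-adjoint second-order perturbation theory yields, after pairing $n$ with $-n$,
\begin{equation*}
-k(\lambda_*,b_0+v)\;=\;2b_0-\hat v_0+\sum_{n\ge 1}\frac{2|\hat v_n|^2}{\xi_n^2+4b_0}+R(v),
\end{equation*}
where $|R(v)|=O(\|v\|_{L^2}^3)$ for $\|v\|_{L^2}$ small. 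The $L^2$-constraint $\int_0^L(b_0+v)^2\,dx=Lb_0^2$ is equivalent to $2b_0\hat v_0+\hat v_0^2+2\sum_{n\ge 1}|\hat v_n|^2=0$, hence
\begin{equation*}
\hat v_0\;=\;-\frac{1}{b_0}\sum_{n\ge 1}|\hat v_n|^2+O(\|v\|_{L^2}^4).
\end{equation*}

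Substituting this into the expansion gives
\begin{equation*}
-k(\lambda_*,b_0+v)-2b_0\;=\;-\sum_{n\ge 1}\frac{\xi_n^2+2b_0}{b_0(\xi_n^2+4b_0)}\,|\hat v_n|^2+O(\|v\|_{L^2}^3).
\end{equation*}
Each coefficient lies in $[1/(2b_0),\,1/b_0)$, and Parseval gives $\sum_{n\ge 1}|\hat v_n|^2\ge\|v\|_{L^2}^2/(2L)-O(\|v\|_{L^2}^4)$; therefore the quadratic term is bounded above by $-\|v\|_{L^2}^2/(4b_0L)+O(\|v\|_{L^2}^4)$, which dominates the cubic remainder once $\eta$ is small, producing the strict inequality $-k(\lambda_*,b_0+v)<2b_0$ and hence $c^*(b_0+v)<c^*(b_0)$. (If $\hat v_n=0$ for every $n\ge 1$, the constraint forces $v\equiv 0$, so no nontrivial $v$ escapes the argument.)

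The main obstacle is rigorously justifying the cubic remainder $R(v)=O(\|v\|_{L^2}^3)$ in the $L^2$-norm of $v$, since the multiplication operator $v\,\cdot$ is not bounded on $L^2$ when only $v\in L^2$. In one space dimension, however, the Sobolev embedding $H^1_{\rm per}\hookrightarrow L^\infty$ renders it relatively bounded with respect to $-\partial_{xx}$ with relative bound controlled by $\|v\|_{L^2}$. Combined with the explicit resolvent estimate $|k_0-k_n|^{-1}\le L^2/(4\pi^2 n^2)$, this enables a Kato-type analytic perturbation argument, via the contour-integral representation of the perturbed eigenprojection for the isolated simple eigenvalue $k_0$, to produce the cubic estimate uniformly for $\|v\|_{L^2}\le\eta$.
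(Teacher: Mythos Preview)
Your approach is correct and genuinely different from the paper's. The paper expands $c^*(b)$ itself around $b_0$: it introduces a rescaling projection $P:B_\ep\to A_2$, computes the second variation $\partial^2 c^*(b_0)[v]^2$ via Lemmas~\ref{lm:second derivative of c(b)} and~\ref{lm:partial^2_b k} (which requires tracking how $\lambda(b)$ moves), and then invokes the analyticity of $(\lambda,b)\mapsto k(\lambda,b)$ on $L^p$ from Lemma~\ref{lm:smoothness of k} to control the remainder by $\|v\|_2^3$. You instead freeze $\lambda=\lambda_*=\sqrt{b_0}$ and exploit the elementary upper bound $c^*(b)\le -k(\lambda_*,b)/\lambda_*$, then feed the constraint directly into the Fourier expansion of $k(\lambda_*,b_0+v)$. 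This is cleaner: it sidesteps the computation of $\partial\lambda(b_0)$ and the projection map entirely, at the cost of yielding only a one-sided inequality rather than the full second-order expansion of $c^*$. Your justification of the cubic $L^2$-remainder---relative boundedness of $M_v$ via the one-dimensional embedding $H^1_{\rm per}\hookrightarrow L^\infty$ and the resulting convergent Neumann series for the resolvent on a fixed contour---is exactly the mechanism behind the paper's Lemma~\ref{lm:smoothness of k}, so the analytic content is the same even if you re-derive it in this special case.

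One small correction: in your displayed expansion of $-k(\lambda_*,b_0+v)$ the first-order term should be $+\hat v_0$, not $-\hat v_0$ (increasing the mean of $b$ increases $-k$); your subsequent substitution and final quadratic form are nonetheless correct, so this is only a typo. Also, the spectral gap $|k_0-k_n|$ is of order $\xi_n^2\sim n^2/L^2$, not $1/L$, though your stated resolvent bound $|k_0-k_n|^{-1}\le L^2/(4\pi^2 n^2)$ is right and is what you actually use.
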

\subsection{Various asymptotic analyses}
We are also interested in the asymptotic behavior of the maximizer as $L->0$ or $\infty$ and $\beta->0$ or $\infty$.
\begin{theorem}[The case of $L\rightarrow0$]
\label{th:L->0}
Let $\{b_L\}_{L>0}$ be a sequence such that for each $L>0$,
\[
b_L\in A_{2,L,\beta}:=\Big\{b\in L^1_{\rm loc}(\R)\,\Big|\,
b(\cdot+L)=b(\cdot),\ b\geq0,\ \frac{1}{L}\int_0^L b^2dx=\beta
\Big\}
\]
satisfies $c^*(b_L)=\underset{b\in A_{2,L,\beta}}{\max}\,c^*(b)$. Then
for any $k\in\N$,
\[
\big\|b_L-\sqrt{\beta}\big\|_{C^2}=O(L^k)\ \ {\rm as}\ \ L\rightarrow0.
\]
\end{theorem}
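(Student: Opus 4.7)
The plan is to exploit the Euler--Lagrange equation \eqref{eq:EL equation} together with the large spectral gap of order $(2\pi/L)^2$ enjoyed by zero-mean $L$-periodic functions for small $L$, via a Fourier analysis.

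\textbf{Step 1: $L^\infty$-convergence to $\sqrt{\beta}$.} I would first show $b_L \to \sqrt{\beta}$ uniformly as $L \to 0$. Using Nadin's formula \eqref{Nf} with the constant test function $\phi \equiv L^{-1/2}$ gives $c^*(b) \geq 2\sqrt{\bar{b}}$, where $\bar{b} := L^{-1}\int_0^L b$. A matching upper bound $c^*(b) = 2\sqrt{\bar{b}} + o(1)$ uniform over admissible $b$ comes from the Poincar\'e inequality on $[0,L]$ forcing near-minimizers of Nadin's functional to concentrate on the constant mode as $L \to 0$. Combined with the Cauchy--Schwarz bound $\bar{b} \leq \sqrt{\beta}$ (under the $L^2$ constraint) and the maximality inequality $c^*(b_L) \geq c^*(\sqrt{\beta}) = 2\beta^{1/4}$, this yields $\bar{b}_L \to \sqrt{\beta}$ and hence $\|b_L - \sqrt{\beta}\|_{L^2([0,L])}/\sqrt{L} \to 0$. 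An extremum argument applied to \eqref{eq:EL equation} (evaluated at a candidate maximum/minimum of $b_L$) provides uniform $L^\infty$ bounds on $b_L$, upgrading this $L^2$-smallness to $L^\infty$-smallness of $b_L - \sqrt{\beta}$.

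\textbf{Step 2: Fourier analysis on the EL equation.} Writing $b_L = \sqrt{\beta} + \bar{v}_L + w_L$ with $\overline{w_L} = 0$, the $L^2$-constraint gives $\bar{v}_L = -\overline{w_L^2}/(2\sqrt{\beta}) + O(\bar{v}_L^2)$. Substituting into \eqref{eq:EL equation} and subtracting the mean yields
\[
w_L'' + M_L\, w_L = -3\bigl(w_L^2 - \overline{w_L^2}\bigr),\qquad M_L := 4\,k(\lambda(b_L),b_L) + 6\sqrt{\beta} + 6\bar{v}_L,
\]
with $M_L \to -2\sqrt{\beta}$ by Step 1 and perturbation of the principal eigenvalue. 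The operator $\partial_x^2 + M_L$ on zero-mean $L$-periodic functions has spectrum bounded away from zero by $(2\pi/L)^2 + O(1)$, so its Fourier inversion gives
\[
|\hat{w}_{L,n}| \;\leq\; \frac{3L^2}{4\pi^2 n^2}\,\overline{w_L^2}\qquad (n \neq 0)
\]
for all $L$ small enough. Summing in $n$ produces $\|w_L\|_{L^\infty} \leq (L^2/4)\|w_L\|_{L^\infty}^2 (1 + o(1))$, so either $w_L \equiv 0$ or $\|w_L\|_{L^\infty} \gtrsim L^{-2}$. Step 1 rules out the second alternative for small $L$; consequently $w_L \equiv 0$ (and hence $\bar{v}_L = 0$), so $b_L \equiv \sqrt{\beta}$ for all $L$ sufficiently small. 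The estimate $\|b_L - \sqrt{\beta}\|_{C^2} = O(L^k)$ for every $k$ is then immediate.

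\textbf{Main obstacle.} The delicate step is Step 1 --- specifically, the uniform $L^\infty$-convergence of $b_L$ to $\sqrt{\beta}$. This requires a quantitative Nadin-type expansion of $c^*(b)$ valid uniformly over the admissible set $A_{2,L,\beta}$, together with a priori $L^\infty$-estimates on $b_L$ derived from the EL equation to rule out tall-thin concentration of $b_L$ as $L \to 0$; without such control, the nonlinear Fourier bootstrap in Step 2 cannot be closed.
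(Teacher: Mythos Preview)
Your Fourier dichotomy in Step~2 is a genuinely different route from the paper's, and once Step~1 is secured it is correct and in fact gives more: it forces $w_L\equiv 0$, hence $b_L\equiv\sqrt{\beta}$ for all sufficiently small $L$, whereas the paper only records $O(L^k)$ via an inductive bootstrap on the rescaled equation. So the two arguments differ mainly in how they extract smallness from the Euler--Lagrange equation: the paper rescales to period~$1$, writes $\widetilde b''/L^2+4k\widetilde b+3\widetilde b^{\,2}=\mathrm{const}$, bounds $\|\widetilde b''\|_{L^1([0,1])}=O(L^2)$ directly, and then iterates $\|v\|_{C^2}=O(L^{2k})\Rightarrow O(L^{2(k+1)})$; you instead invert $\partial_x^2+M_L$ on zero-mean functions and use the $(2\pi/L)^2$ spectral gap to close a quadratic inequality.

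Your proposed Step~1, however, is both harder than necessary and not clearly complete. The extremum argument you invoke does not by itself yield a uniform $L^\infty$ bound: at a maximum $M$ the inequality $3M^2+4kM\geq C$ is satisfied for arbitrarily large $M$, so evaluating \eqref{eq:EL equation} at extreme points gives no upper bound on $\max b_L$. The paper bypasses this entirely, and its trick also dispatches your Step~1 in one line: since $\frac{1}{L}\int_0^L b_L\leq\sqrt{\beta}$ by Cauchy--Schwarz and $|k(\lambda(b_L),b_L)|$ is bounded independently of $L$ by Corollary~\ref{cr:estimate k(lambda(b),b)}, integrating the Euler--Lagrange equation gives $\int_0^L|b_L''|\,dx=O(L)$, whence $\|b_L'\|_{L^\infty}=O(L)$ and $\|b_L-\sqrt{\beta}\|_{L^\infty}=O(L^2)$ via Lemma~\ref{lm:norm ineq}. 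This already supplies far more than the qualitative $L^\infty$-smallness you need to rule out the $\gtrsim L^{-2}$ branch of your dichotomy, with no Nadin expansion, no Poincar\'e argument, and no maximum principle required.
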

\begin{theorem}[The case of $\beta\rightarrow0$]
\label{th:beta->0}
Let $\{b_\beta\}_{\beta>0}$ be a sequence such that for each $\beta>0$,
\[
b_{\beta}\in A_{2,L,\beta}:=\Big\{b\in L^1_{\rm loc}(\R)\,\Big|\,
b(\cdot+L)=b(\cdot),\ b\geq0,\ \int_0^L b^2dx=\beta
\Big\}
\]
satisfies $c^*(b_{\beta})=\underset{b\in A_{2,L,\beta}}{\max}\,c^*(b)$. Then
for any $k\in\N$,
\[
\big\|b_{\beta}-\sqrt{\beta}\big\|_{C^2}=O(\beta^k)\ \ {\rm as}\ \ \beta\rightarrow0.
\]
\end{theorem}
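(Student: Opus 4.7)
The plan is to follow the same scheme as the proof of Theorem \ref{th:L->0}: first establish a coarse convergence of $b_\beta$ to the constant $b_0$ (the unique constant element of $A_{2,L,\beta}$) in some norm, then use the Euler--Lagrange equation \eqref{eq:EL equation} as a bootstrap device that upgrades any polynomial-rate estimate to one of strictly higher order.

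First, I would obtain $b_\beta \to b_0$. For $b$ small in $L^{\infty}$, standard perturbation theory for the principal eigenvalue $k(\lambda, b)$ of $-L_{\lambda,b}$ around $b\equiv 0$ yields $k(\lambda, b) = -\lambda^{2} - \bar b + O(\|b\|_\infty^2)$, where $\bar b := L^{-1}\int_0^L b\,dx$. Substituting into $c^{*}(b) = \min_{\lambda > 0}(-k(\lambda,b)/\lambda)$ and optimizing gives
\[
c^{*}(b) = 2\sqrt{\bar b}\,\bigl(1 + O(\|b\|_\infty)\bigr).
\]
Under the constraint defining $A_{2,L,\beta}$, Cauchy--Schwarz gives an upper bound on $\bar b$ saturated only by $b\equiv b_0$; combined with the error estimate this forces $\bar b_\beta / b_0 \to 1$, hence $b_\beta \to b_0$ in $L^2$, and elliptic regularity applied to \eqref{eq:EL equation} then upgrades this to $C^2$ convergence.

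Second, I would write $b_\beta = b_0 + w_\beta$ with $w_\beta$ small and substitute into \eqref{eq:EL equation}. Collecting terms yields
\[
w_\beta'' + \mu_\beta\, w_\beta + 3\, w_\beta^2 = \Lambda_\beta,
\]
where $\mu_\beta := 4 k(\lambda(b_\beta), b_\beta) + 6 b_0$ and $\Lambda_\beta$ is the explicit constant produced by the collection. A short computation based on the expansion above shows $\mu_\beta = -2 b_0 + o(b_0)$, which for small $\beta$ stays bounded away from the spectrum $\{-(2\pi n/L)^2 : n\geq 1\}$ of $d^2/dx^2$ on mean-zero $L$-periodic functions. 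Hence $d^2/dx^2 + \mu_\beta$ admits a bounded inverse on that subspace, and a contraction-mapping argument (with the constraint pinning down the mean of $w_\beta$) delivers $\|w_\beta\|_{C^2} = O(\beta^{m})$ for some $m\geq 1$.

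To reach $O(\beta^{k})$ for arbitrary $k$, I would iterate this bootstrap. Given $\|w_\beta\|_{C^2} = O(\beta^m)$, the quadratic term is $O(\beta^{2m})$, and the nonlocal coefficients $\mu_\beta$ and $\Lambda_\beta$ can be expanded analytically in $w_\beta$ (via perturbation theory for the principal eigenvalue together with the implicit formula for the optimal exponent $\lambda(b)$), so the residue in the linearized equation shrinks by a further factor of $\sqrt{\beta}$ at each step and resolving produces $\|w_\beta\|_{C^2} = O(\beta^{m+1/2})$. The main obstacle I anticipate is exactly this bookkeeping of the nonlocal dependence through $k(\lambda(b_\beta), b_\beta)$: one needs an all-orders analyticity statement for $k(\lambda,b)$ and $\lambda(b)$ as functionals of $b$ near $b_0$ — most cleanly established via the Nadin characterization \eqref{Nf} — to guarantee that every iteration gains a genuine factor of $\sqrt{\beta}$ rather than saturating at some finite order and producing only a polynomial rate.
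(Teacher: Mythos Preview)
Your overall strategy---Euler--Lagrange equation plus bootstrap---matches the paper's, but there is a gap in your initial step and you miss a structural simplification that makes the whole argument elementary.

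The gap: your perturbation expansion $k(\lambda,b)=-\lambda^{2}-\bar b+O(\|b\|_\infty^{2})$ presupposes $\|b_\beta\|_\infty$ small, which is not available a priori from the $L^{2}$ constraint; so the deduction ``$\bar b_\beta/b_0\to 1$'' is circular. The paper bypasses this entirely: the two-sided bound $\alpha+\lambda^{2}\le -k(\lambda,b)\le \alpha+\alpha^{2}L^{2}+\lambda^{2}$ (Lemma~\ref{lm:estimate lambda, k}, Corollary~\ref{cr:estimate k(lambda(b),b)}) holds for \emph{every} nonnegative periodic $b$, and together with $\alpha=\bar b\le\sqrt\beta$ (Cauchy--Schwarz from the constraint) gives $k(\lambda(b_\beta),b_\beta)=O(\sqrt\beta)$ with no smallness hypothesis on $b_\beta$ at all.

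The simplification you miss: after reducing to $L=1$ by rescaling, set $\widetilde b:=b_\beta/\sqrt\beta$ and $v:=\widetilde b-1$. The Euler--Lagrange equation becomes
\[
v''+4k\,v+6\sqrt\beta\,v+3\sqrt\beta\,v^{2}=4k\int_0^1 v,
\]
and because $k=O(\sqrt\beta)$, \emph{every} coefficient on the right of $v''=\cdots$ is $O(\sqrt\beta)$. Combined with the elementary inequality $\|v\|_{L^\infty}\le\|v'\|_{L^\infty}\le\|v''\|_{L^\infty}$ for periodic functions (Lemma~\ref{lm:norm ineq}), this yields $\|v\|_{C^{2}}=O(\sqrt\beta^{\,m})\Rightarrow\|v\|_{C^{2}}=O(\sqrt\beta^{\,m+1})$ directly---no inversion of $d^{2}/dx^{2}+\mu_\beta$, no contraction mapping, and no analyticity of $k(\lambda,b)$ or $\lambda(b)$ beyond the crude scalar bound $k=O(\sqrt\beta)$. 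Your worry about all-orders expansion of the nonlocal coefficients is therefore unnecessary: the only nonlocal input, $k$, enters as a single $O(\sqrt\beta)$ constant and needs no finer resolution.
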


Different from the rapidly oscillating environments, the minimal speed is difficult to characterize for the slowly oscillating environments. However, we can still deal with the asymptotic analyses by assuming the maximizer is periodically arrayed step functions.
\begin{theorem}[The case of $L\rightarrow\infty$]
\label{th:L->infty}
Define the step function $b_{\theta,L}(x)$ as 
\[
b_{\theta,L}(x):=b_{\theta}(x/L),\ \
b_{\theta}(x):=\beta^{1/p}\sum_{k\in\Z}\theta^{-1/p}\chi_{I_k}(x)\ \ (I_k:=k+[0,\theta])
\]
and choose $\theta(L)\in(0,1]$ such that
$c^*(b_{\theta(L),L})=\underset{\theta\in(0,1]}{\max}\,c^*(b_{\theta,L})$. Then
\[
\theta(L)\rightarrow0,\ \
c^*(b_{\theta(L),L})\rightarrow\infty\ \ {\rm as}\ \ L\rightarrow\infty.
\]
\end{theorem}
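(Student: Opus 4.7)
The plan is to sandwich $c^*(b_{\theta(L),L})$ between two explicit expressions in $L$: an $L^{\infty}$-based upper bound depending on $\theta$, and a Nadin-formula lower bound valid for a carefully chosen trial $\theta = \theta_L^{\dagger}\to 0$. Together these will force both $\theta(L)\to 0$ and $c^*(b_{\theta(L),L})\to\infty$.

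For the upper bound I apply the classical maximum-principle argument to the positive principal eigenfunction $\psi$ of the operator in \eqref{eq:elliptic operator}. At its maximum point $x_0\in\R/L\Z$ we have $\psi'(x_0)=0$ and $\psi''(x_0)\le 0$, and substitution into the eigen-equation yields $-k(\lambda,b)\le b(x_0)+\lambda^2\le\|b\|_{\infty}+\lambda^2$; minimizing in $\lambda>0$ via \eqref{eq:characterization of c^*} gives $c^*(b)\le 2\sqrt{\|b\|_{\infty}}$. Since $\|b_{\theta,L}\|_{\infty}=\beta^{1/p}\theta^{-1/p}$, this specializes to
\[
c^*(b_{\theta,L})\le 2\beta^{1/(2p)}\theta^{-1/(2p)}.
\]

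For the lower bound I insert into Nadin's formula \eqref{Nf} a smooth periodic approximation $\phi$ of the two-level function
\[
\phi_0 = C\,\chi_{[0,\theta L]}+c\,\chi_{(\theta L,L)},\qquad C^2\theta L = c^2(1-\theta)L = \tfrac{1}{2},
\]
mollified on a scale $\delta$ satisfying $(L\beta^{1/p}\theta^{(p-1)/p})^{-1}\ll\delta\ll\theta L$. Direct computation gives $\int_0^L b_{\theta,L}\phi^2=\tfrac{1}{2}\beta^{1/p}\theta^{-1/p}(1+o(1))$, $L^2/\int_0^L\phi^{-2}\to\tfrac{1}{2}$, while $\int_0^L(\phi')^2$ is negligible compared to the main term. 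The admissible $\delta$-window is nonempty precisely when $\theta\gtrsim L^{-2p/(2p-1)}$. Choosing $\theta_L^{\dagger} := K L^{-2p/(2p-1)}$ with $K=K(\beta,p)$ sufficiently large and $\delta$ inside the window, Nadin's formula produces
\[
-k(\lambda,b_{\theta_L^{\dagger},L})\geq c_1 L^{2/(2p-1)}+\tfrac{1}{2}\lambda^2 \quad \text{for all } \lambda>0,
\]
so by \eqref{eq:characterization of c^*}, $c^*(b_{\theta_L^{\dagger},L})\geq\sqrt{2c_1}\,L^{1/(2p-1)}\to\infty$.

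Combining the two bounds yields $c_2 L^{1/(2p-1)}\le c^*(b_{\theta(L),L})\le 2\beta^{1/(2p)}\theta(L)^{-1/(2p)}$, which forces both $\theta(L)=O(L^{-2p/(2p-1)})\to 0$ and $c^*(b_{\theta(L),L})\to\infty$. The principal technical hurdle is the lower-bound bookkeeping: I will have to verify that the mollification-induced corrections to each of $\int b\phi^2$, $\int\phi^{-2}$, and $\int(\phi')^2$ remain strictly of lower order than the $L^{2/(2p-1)}$ main scaling precisely when $\theta$ sits at the critical power of $L$. With any $\delta$ chosen inside the admissible window these reduce to elementary power-counting in $\theta L$ together with a sufficiently large choice of $K$ depending on $\beta$ and $p$.
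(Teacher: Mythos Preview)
Your approach is correct and genuinely different from the paper's. The paper invokes Theorem~2.1 of Hamel--Fayard--Roques \cite{HFR}, which gives an explicit formula for the pointwise limit $c^*_{\infty}(\theta):=\lim_{L\to\infty}c^*(b_{\theta,L})$; the authors then check by direct asymptotics that $c^*_{\infty}(\theta)\to\infty$ as $\theta\to0$, and combine this with the monotonicity of $L\mapsto c^*(b_{\theta,L})$ (Proposition~\ref{pr:monotonicity}) to squeeze $\theta(L)$ below any $\ep>0$ and push the maximal speed above any $M$. Your argument bypasses the HFR limit entirely: the upper bound $c^*(b_{\theta,L})\le 2\sqrt{\|b_{\theta,L}\|_{\infty}}$ is the elementary monotonicity $-k(\lambda,b)\le\|b\|_{\infty}+\lambda^2$ (immediate from Nadin's formula or the maximum principle), and the lower bound comes from a single, well-chosen test function in \eqref{Nf}. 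The trade-off is that the paper's route is cleaner once the external HFR result is taken for granted and yields the sharp limiting profile $c^*_{\infty}(\theta)$, whereas your route is self-contained within the tools already developed in Sections~3--4 and, as a bonus, delivers quantitative rates $\theta(L)=O(L^{-2p/(2p-1)})$ and $c^*(b_{\theta(L),L})\gtrsim L^{1/(2p-1)}$ that the paper does not obtain. One small caveat: in your maximum-principle derivation of the upper bound, $b_{\theta,L}$ is discontinuous, so either approximate $b$ by smooth coefficients and pass to the limit, or (simpler) deduce $-k(\lambda,b)\le\|b\|_{\infty}+\lambda^2$ directly from the monotonicity of $k(\lambda,\cdot)$ in $b$ together with $k(\lambda,\|b\|_{\infty})=-\|b\|_{\infty}-\lambda^2$.
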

\begin{theorem}[The case of $\beta\rightarrow\infty$]
\label{th:beta->infty}
Define the step function $b_{\theta}(x)$ as 
\[
b_{\theta}(x):=\sum_{k\in\Z}\theta^{-1/p}\chi_{I_k}(x/L)\ \ (I_k:=k+[0,\theta])
\]
and choose $\theta(\beta)\in(0,1]$ such that
$c^*(\beta^{1/p}b_{\theta(\beta)})=\underset{\theta\in(0,1]}{\max}\,c^*(\beta^{1/p}b_{\theta})$.
Then
\[
\theta(\beta)\rightarrow0,\ \
c^*(\beta^{1/p}b_{\theta(\beta)})/\beta^{\frac{1}{2p}}
\rightarrow\infty\ \ {\rm as}\ \ \beta\rightarrow\infty.
\]
\end{theorem}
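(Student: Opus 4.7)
The plan is to reduce Theorem~\ref{th:beta->infty} to Theorem~\ref{th:L->infty} by a scaling argument. Observe first that the two families of step functions are the same object: $\beta^{1/p}b_\theta(x)$ in the statement of Theorem~\ref{th:beta->infty} is the $L$-periodic function of height $\beta^{1/p}\theta^{-1/p}$ supported on $\bigcup_{k\in\Z}(kL+[0,\theta L])$, which coincides with $b_{\theta,L}(x)$ from Theorem~\ref{th:L->infty}. The only difference is that here $L$ is held fixed while $\beta\to\infty$, whereas there $\beta$ is fixed and $L\to\infty$. The key observation is that these two asymptotic regimes are equivalent modulo a rescaling of space.

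First I would establish the scaling identity
\[
c^*(\mu\, b) \;=\; \sqrt{\mu}\; c^*\bigl(b(\cdot/\sqrt{\mu})\bigr) \qquad (\mu>0),
\]
where $\mu\, b$ on the left is viewed as $L$-periodic and $b(\cdot/\sqrt{\mu})$ on the right as $L\sqrt{\mu}$-periodic. This follows from Nadin's formula~\eqref{Nf} together with the characterization $c^*(b)=\min_{\lambda>0}(-k(\lambda,b)/\lambda)$: given $\phi\in E_L$, the change of variables $y=\sqrt{\mu}\,x$ and the test function $\tilde\phi(y):=\mu^{-1/4}\phi(y/\sqrt{\mu})\in E_{L\sqrt{\mu}}$ transform the three integrals in~\eqref{Nf} in a controlled manner and yield $k(\sqrt{\mu}\,\tilde\lambda,\mu b)=\mu\, k(\tilde\lambda,b(\cdot/\sqrt{\mu}))$. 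Then $c^*(\mu b)=\min_{\tilde\lambda}[-\mu k(\tilde\lambda,b(\cdot/\sqrt{\mu}))/(\sqrt{\mu}\,\tilde\lambda)]=\sqrt{\mu}\,c^*(b(\cdot/\sqrt{\mu}))$.

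Next I would apply this identity with $\mu=\beta^{1/p}$. The rescaled function $b_\theta(y/\sqrt{\mu})$ is an $L\sqrt{\mu}$-periodic step function of height $\theta^{-1/p}$ occupying a fraction $\theta$ of each period, which is exactly the family treated in Theorem~\ref{th:L->infty} for parameters $\beta'=1$ and effective period $L'=L\beta^{1/(2p)}$. Consequently, for every $\theta\in(0,1]$,
\[
c^*(\beta^{1/p}\,b_\theta) \;=\; \beta^{1/(2p)}\, c^*\bigl(\tilde b_{\theta,L'}\bigr),
\]
where $\tilde b_{\theta,L'}$ denotes the step function from Theorem~\ref{th:L->infty} with $\beta'=1$ and period $L'$. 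Maximization over $\theta$ on the two sides is therefore the same operation, and the maximizer $\theta(\beta)$ coincides with the maximizer from Theorem~\ref{th:L->infty} at effective period $L'=L\beta^{1/(2p)}$.

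Since $\beta\to\infty$ forces $L'\to\infty$, Theorem~\ref{th:L->infty} supplies $\theta(\beta)\to 0$ and $c^*(\tilde b_{\theta(\beta),L'})\to\infty$. Translating back through the scaling, $\theta(\beta)\to 0$ and $c^*(\beta^{1/p}b_{\theta(\beta)})/\beta^{1/(2p)}\to\infty$, which is exactly the desired conclusion. The only new ingredient beyond Theorem~\ref{th:L->infty} is the scaling identity, which is a direct substitution in Nadin's formula; the main technical obstacle is therefore not in the present argument but lies upstream in the quantitative estimates used to prove Theorem~\ref{th:L->infty} itself.
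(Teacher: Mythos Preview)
Your proposal is correct and follows essentially the same route as the paper: both establish the scaling identity $c^*(\beta^{1/p}b_{\theta,L})=\beta^{1/(2p)}c^*(b_{\theta,\beta^{1/(2p)}L})$ (the paper derives it from the eigenvalue rescaling~\eqref{eq:rescaling}, you from Nadin's formula, but these are equivalent) and then invoke Theorem~\ref{th:L->infty} at the effective period $L'=L\beta^{1/(2p)}\to\infty$.
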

\begin{remark}
We note that for each $L>0$ and each $\beta>0$,
$\underset{\theta\rightarrow0}{\lim}\,c^*(\beta^{1/p}b_{\theta,L})=0$.
\end{remark}
\begin{remark}
As a consequence of Corollary \ref{cr:L^p}, for each $p\in(1,\infty)$, the constant function $b_0:=\beta^{1/p}$
is a local maximizer of $c^*$ in $A_{p,L,\beta}$ when $\beta>0$ or $L>0$ is very large.
From Theorems \ref{th:L->infty} and \ref{th:beta->infty},
for sufficiently large $L>0$ or for sufficiently large $\beta>0$,
the constant $b_0$ is not a global maximizer and hence the maximizer is a non-trivial function.
Especially, we can conclude that the Eular--Lagrange equation (2.1) has non-trivial solution when $L>0$ or $\beta>0$
is large enough.
\end{remark}
\section{Proof of the result under $L^1$ and $L^{\infty}$ constraints}
In this section, we mainly consider the maximizer of the minimal pulsating traveling wave speed
under $L^1$ and $L^{\infty}$ constraints. We begin by introducing the Schwarz rearrangement. Nadin's formula and the properties of Schwarz rearrangement
play the most important roles in the proof.

\begin{definition}
[The Schwarz periodic rearrangement]
\label{df:Schwarz}
For an $L$-periodic measurable function $\phi$,
there exists a unique $L$-periodic function
$\phi^{*}$
such that
\begin{itemize}
\item[(i)]
$\phi^{*}$
is symmetric with respect to
$L/2$,
\item[(ii)]
$\phi^{*}$
is nondecreasing in
$[0,L/2]$,
\item[(iii)]
for all
$a\in{\mathbb R}$,
\[
{\rm meas}\{x\in[0,L]\mid \phi(x)>a\}
={\rm meas}\{x\in[0,L]\mid \phi^{*}(x)>a\}
\]
\end{itemize}
The function
$\phi^{*}$
is called the Schwarz periodic rearrangement of the function
$\phi$.
\end{definition}

We recall that the minimal speed of traveling waves is characterized by
\[
c^{*}(b)
=\min_{\lambda>0}\frac{-k(\lambda,b)}{\lambda},
\]
where
\[
-L_{\lambda,b}\psi
:=-\psi''-2\lambda\psi'-(b(x)+\lambda^{2})\psi
=k(\lambda,b)\psi
\quad(x\in{\mathbb R/L{\mathbb Z}}).
\]
As we mentioned in the section $1$, the following formula (Nadin's formula) holds:
\[
k(\lambda,b)=\min_{\phi\in E_{L}}H(\phi,\lambda,b),
\]
\[
H(\phi,\lambda,b)=
H_{L}(\phi,\lambda,b):=
\int_{0}^{L}(\phi^{\prime2}-b\phi^{2})dx
-\frac{\lambda^{2}L^{2}}{\int_{0}^{L}\phi^{-2}dx},
\]
\[
E_{L}
:=\Big\{\phi\in H^{1}_{L\mbox{\tiny-per}}\,\Big|\,\int_{0}^{L}\phi^{2}=1\Big\}.
\]

\begin{remark}
[Consequences of Nadin's formula]
The following propositions are easily proved by using Nadin's formula.
\begin{proposition}
[H. Berestycki, F. Hamel and L. Roques \cite{BHR1},
Nadin \cite{N}]
\label{pr:S-rearrangement effect}
Fix an $L$-periodic function $b$.
Under the above notations, we have
\[
k(\lambda,b)\geq k(\lambda,b^*)\quad\quad(\lambda\in{\mathbb R}),
\]
and hence
\[
c^*(b)\leq c^*(b^*).
\]
\label{equi-measurable}
\end{proposition}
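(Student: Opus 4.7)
\textbf{Proof plan for Proposition \ref{pr:S-rearrangement effect}.} The strategy is to exploit Nadin's variational characterization and to show, term by term, that the functional $H(\phi,\lambda,b)$ can only decrease when one simultaneously replaces $\phi$ and $b$ by their Schwarz periodic rearrangements. Concretely, let $\psi\in E_L$ be a minimizer of $H(\cdot,\lambda,b)$; this is the positive principal eigenfunction of $-L_{\lambda,b}$, so $\psi>0$ and $\psi\in H^1_{L\mbox{\tiny-per}}$. Denote its Schwarz periodic rearrangement by $\psi^{*}$. Since rearrangement preserves the distribution, $\psi^{*}>0$ a.e.\ and $\int_{0}^{L}(\psi^{*})^{2}=\int_{0}^{L}\psi^{2}=1$, hence $\psi^{*}\in E_L$ is an admissible test function for $H(\cdot,\lambda,b^{*})$.

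The plan is then to verify the following three estimates on the building blocks of $H$: (i) by the periodic Pólya--Szegő inequality,
\[
\int_{0}^{L}\bigl((\psi^{*})'\bigr)^{2}dx\ \leq\ \int_{0}^{L}(\psi')^{2}dx;
\]
(ii) by the Hardy--Littlewood rearrangement inequality applied to the nonnegative $L$-periodic functions $b$ and $\psi^{2}$ (note that $(\psi^{2})^{*}=(\psi^{*})^{2}$ since rearrangement commutes with monotone transformations of nonnegative functions),
\[
\int_{0}^{L}b\,\psi^{2}dx\ \leq\ \int_{0}^{L}b^{*}(\psi^{*})^{2}dx;
\]
(iii) by equimeasurability of $\psi$ and $\psi^{*}$ applied to the Borel function $t\mapsto t^{-2}$,
\[
\int_{0}^{L}(\psi^{*})^{-2}dx\ =\ \int_{0}^{L}\psi^{-2}dx.
\]
Combining these three, one gets $H(\psi^{*},\lambda,b^{*})\leq H(\psi,\lambda,b)=k(\lambda,b)$, and since $k(\lambda,b^{*})=\min_{\phi\in E_L}H(\phi,\lambda,b^{*})\leq H(\psi^{*},\lambda,b^{*})$, the desired inequality $k(\lambda,b)\geq k(\lambda,b^{*})$ follows for every $\lambda\in\R$ (for $\lambda=0$ it reduces to the classical case, for $\lambda\neq 0$ the third term is finite thanks to $\psi>0$).

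To pass to the statement for $c^{*}$, I would simply note that $k(\lambda,b)\geq k(\lambda,b^{*})$ gives $-k(\lambda,b)/\lambda\leq -k(\lambda,b^{*})/\lambda$ for every $\lambda>0$, so taking the minimum over $\lambda>0$ in the formula $c^{*}(b)=\min_{\lambda>0}(-k(\lambda,b)/\lambda)$ yields $c^{*}(b)\leq c^{*}(b^{*})$.

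The main obstacle is really (i) and making sure the periodic version of Pólya--Szegő is available in the form needed here: the argument is classical for Dirichlet or whole-line rearrangement, and in the periodic setting the symmetry around $L/2$ built into Definition \ref{df:Schwarz} is precisely what allows one to reduce to the rearrangement on $[0,L]$ with appropriate boundary values. I would cite the periodic Pólya--Szegő inequality from \cite{BHR1} or \cite{N} rather than reprove it; the other two inequalities are standard consequences of equimeasurability and the layer-cake formula, and the positivity $\psi>0$ (needed for $\psi^{-2}$ to make sense and for $(\psi^{2})^{*}=(\psi^{*})^{2}$) is guaranteed by the Krein--Rutman/Perron--Frobenius nature of the principal eigenfunction of $-L_{\lambda,b}$.
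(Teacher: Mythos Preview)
Your argument is correct and is exactly the approach the paper has in mind: the proposition is listed as an immediate consequence of Nadin's formula, and in the proof of Theorem~\ref{th:existence L^1 L^{infty}} the paper invokes the same two rearrangement inequalities (periodic P\'olya--Szeg\H{o} and Hardy--Littlewood) you use, with equimeasurability handling the $\int_0^L\phi^{-2}$ term. One small inaccuracy worth fixing: the minimizer of $H(\cdot,\lambda,b)$ is not the principal eigenfunction $\psi_{\lambda,b}$ of $-L_{\lambda,b}$ itself but rather $\phi_{\lambda,b}=\sqrt{\psi_{\lambda,b}\widetilde{\psi}_{\lambda,b}}$ (see Remark~\ref{rm:phi=sqrt{psiwidetilde{psi}}}); this does not affect your proof, since all you actually use is that the minimizer is strictly positive and belongs to $H^1_{L\mbox{\tiny-per}}$.
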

\begin{proposition}
[H. Berestycki, F. Hamel and L. Roques \cite{BHR2}]
\label{pr:concavity}
For each
$b\in A_{f}$,
the function
\[
\lambda\mapsto k(\lambda,b)
\]
is strictly concave and
$k(0,b)<0$,
especially,
there exists a unique
$\lambda(b)>0$
satisfying
\[
\frac{-k(\lambda(b),b)}{\lambda(b)}
=\min_{\lambda>0}\frac{-k(\lambda,b)}{\lambda}.
\]
\end{proposition}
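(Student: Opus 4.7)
The plan is to derive all three assertions directly from Nadin's formula, exploiting the fact that for each fixed $\phi\in E_L$ the function $\lambda\mapsto H(\phi,\lambda,b)$ is a concave quadratic in $\lambda$. Writing
\[
H(\phi,\lambda,b)=A(\phi,b)-B(\phi)\lambda^2,\quad A(\phi,b):=\int_0^L(\phi'^2-b\phi^2)\,dx,\quad B(\phi):=\frac{L^2}{\int_0^L\phi^{-2}\,dx},
\]
we have $B(\phi)>0$ for every $\phi\in E_L$, so the coefficient of $\lambda^2$ is strictly negative.

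For strict concavity of $\lambda\mapsto k(\lambda,b)$, I would fix $\lambda_1\neq\lambda_2$ and $t\in(0,1)$, set $\lambda=t\lambda_1+(1-t)\lambda_2$, and pick a minimizer $\phi^*\in E_L$ of $H(\cdot,\lambda,b)$ (existence of which is part of Nadin's formula being realized as a minimum). Using strict convexity of $s\mapsto s^2$, namely $\lambda^2<t\lambda_1^2+(1-t)\lambda_2^2$, together with $B(\phi^*)>0$, I obtain
\[
k(\lambda,b)=H(\phi^*,\lambda,b)>tH(\phi^*,\lambda_1,b)+(1-t)H(\phi^*,\lambda_2,b)\geq tk(\lambda_1,b)+(1-t)k(\lambda_2,b),
\]
where the last inequality uses Nadin's formula at $\lambda_1$ and $\lambda_2$. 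The inequality $k(0,b)<0$ then follows by testing $H(\cdot,0,b)$ against the constant $\phi\equiv 1/\sqrt{L}$, which gives $k(0,b)\leq -(1/L)\int_0^L b\,dx<0$ because $b\geq 0$ is nontrivial (any $b\in A_f$ with $\beta$ exceeding $f(0)$ is nonzero on a set of positive measure).

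For existence and uniqueness of $\lambda(b)$, let $g(\lambda):=-k(\lambda,b)/\lambda$ on $(0,\infty)$. Continuity of $g$ follows from continuity of $k(\cdot,b)$. Since $k(0,b)<0$, we have $g(\lambda)\to+\infty$ as $\lambda\to 0^+$. Using the same test function $\phi\equiv 1/\sqrt{L}$ in Nadin's formula gives the upper bound $k(\lambda,b)\leq -(1/L)\int b\,dx-\lambda^2$, whence $g(\lambda)\geq \lambda+(L\lambda)^{-1}\int b\,dx\to+\infty$ as $\lambda\to\infty$. Hence $g$ attains its infimum at some $\lambda(b)\in(0,\infty)$. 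For uniqueness, suppose $\lambda_1<\lambda_2$ both attain the minimum value $c^*$. Then $k(\lambda_i,b)+c^*\lambda_i=0$ for $i=1,2$, while strict concavity of $\lambda\mapsto k(\lambda,b)+c^*\lambda$ forces this function to be strictly positive on $(\lambda_1,\lambda_2)$, contradicting $g\geq c^*$ on $(0,\infty)$.

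The only subtle point is the strict-concavity step: the standard observation that an infimum of a family of concave functions is concave yields only weak concavity of $k(\cdot,b)$, and one has to evaluate $H$ at the minimizer $\phi^*$ associated with the \emph{intermediate} value $\lambda$, rather than at the minimizers for $\lambda_1$ or $\lambda_2$, in order to preserve strictness when passing to the infimum. All remaining steps are essentially bookkeeping with Nadin's formula.
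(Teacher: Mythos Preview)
Your proof is correct and follows precisely the route the paper indicates: the proposition is listed among the ``Consequences of Nadin's formula'' and the paper provides no further details beyond that remark and the citation to \cite{BHR2}. Your argument---evaluating $H$ at the minimizer $\phi^*$ for the intermediate $\lambda$ to preserve strict concavity, testing with the constant $\phi\equiv 1/\sqrt{L}$ for the sign of $k(0,b)$ and the growth of $g$ at infinity, and using strict concavity of $k(\lambda,b)+c^*\lambda$ for uniqueness---fills in exactly the details the paper omits.
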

\begin{proposition}
[G. Nadin \cite{N}]
\label{pr:monotonicity}
The functions
\[
L\mapsto\sup_{b\in A_{L,f}}c^*(b)
\]
are monotonically increasing.
\end{proposition}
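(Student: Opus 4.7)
The plan is to prove the monotonicity by a direct comparison: given $L_1<L_2$, from any admissible $b\in A_{L_1,f}$ I would produce a companion $\tilde b\in A_{L_2,f}$ with $c^*(\tilde b)\geq c^*(b)$. Once this is shown, taking the supremum over $b\in A_{L_1,f}$ gives $\sup_{A_{L_1,f}}c^*\leq\sup_{A_{L_2,f}}c^*$. The natural choice is the rescaled function
\[
\tilde b(x):=b(L_1 x/L_2).
\]
A change of variables immediately shows $\tilde b$ is $L_2$-periodic and satisfies $\frac{1}{L_2}\int_0^{L_2}f(\tilde b)\,dx=\frac{1}{L_1}\int_0^{L_1}f(b)\,dx=\beta$, so that $\tilde b\in A_{L_2,f}$.

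The comparison of wave speeds would go through Nadin's formula. Given any test function $\psi\in E_{L_1}$, I would associate the test function
\[
\phi(x):=\sqrt{L_1/L_2}\;\psi(L_1 x/L_2),
\]
which is $L_2$-periodic and satisfies $\int_0^{L_2}\phi^2\,dx=\int_0^{L_1}\psi^2\,dx=1$, hence $\phi\in E_{L_2}$. A direct change of variables computation for each of the three terms in $H_{L_2}(\phi,\lambda,\tilde b)$ yields
\[
H_{L_2}(\phi,\lambda,\tilde b)=H_{L_1}(\psi,\lambda,b)-\Big(1-(L_1/L_2)^2\Big)\int_0^{L_1}\psi'(x)^2\,dx.
\]
Because the correction term is nonpositive and $\psi\mapsto\phi$ is a bijection between $E_{L_1}$ and the image subset of $E_{L_2}$, taking the minimum over $\psi\in E_{L_1}$ on both sides and using Nadin's formula gives $k(\lambda,\tilde b)\leq k(\lambda,b)$ for every $\lambda>0$.

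Finally I would divide by $\lambda$ and take $\min_{\lambda>0}$: from $-k(\lambda,\tilde b)/\lambda\geq -k(\lambda,b)/\lambda$ pointwise in $\lambda$, the characterization $c^*(b)=\min_{\lambda>0}(-k(\lambda,b)/\lambda)$ yields $c^*(\tilde b)\geq c^*(b)$, completing the argument.

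The only real obstacle is the rescaling computation: one must choose the prefactor $\sqrt{L_1/L_2}$ in $\phi$ precisely so that $\phi\in E_{L_2}$, and then verify that the three factors appearing in $H_{L_2}$ (the Dirichlet energy, the $b$-weighted $L^2$-term, and the $\int\phi^{-2}$ term in the denominator) transform in a compatible way so that the potential term and the $\lambda^2 L^2/\int\phi^{-2}$ term match exactly their $L_1$-counterparts while the only surviving discrepancy is the factor $(L_1/L_2)^2$ in front of the Dirichlet term. This is routine but requires tracking the Jacobians carefully; no compactness or regularity argument is needed beyond what is already built into Nadin's formula.
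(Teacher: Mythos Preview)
Your argument is correct and is precisely the rescaling-plus-Nadin's-formula approach the paper has in mind when it lists this proposition among the ``consequences of Nadin's formula'' (the paper does not spell out a proof of its own). One small remark: you do not need the map $\psi\mapsto\phi$ to be a bijection onto its image; it suffices that each $\psi\in E_{L_1}$ produces \emph{some} $\phi\in E_{L_2}$, since then $k(\lambda,\tilde b)\leq H_{L_2}(\phi,\lambda,\tilde b)\leq H_{L_1}(\psi,\lambda,b)$ and minimizing over $\psi$ already gives $k(\lambda,\tilde b)\leq k(\lambda,b)$.
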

\begin{proposition}
[H. Berestycki, F. Hamel and L. Roques \cite{BHR2}]
\label{pr:minimizer in L^1}
For any
$b\in A_{f}$,
\[
2\Big(\frac{1}{L}\int_0^L b dx\Big)^{1/2}\leq c^{*}(b),
\]
and the equality holds if and only if
$b$ is constant.
\end{proposition}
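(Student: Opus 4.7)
The plan is to apply Nadin's formula \eqref{Nf} with the constant test function $\phi_0:=1/\sqrt{L}\in E_L$. Setting $\alpha:=\frac{1}{L}\int_0^L b\,dx$, a direct computation gives
\[
H(\phi_0,\lambda,b)=0-\alpha-\frac{\lambda^2 L^2}{L^2}=-\alpha-\lambda^2,
\]
so $k(\lambda,b)\le -\alpha-\lambda^2$ for every $\lambda>0$. Using the elementary inequality $(\alpha+\lambda^2)/\lambda\ge 2\sqrt{\alpha}$, with equality only at $\lambda=\sqrt{\alpha}$, the characterization $c^*(b)=\min_{\lambda>0}(-k(\lambda,b)/\lambda)$ from \eqref{eq:characterization of c^*} immediately yields $c^*(b)\ge 2\sqrt{\alpha}$.

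For the equality case, suppose $c^*(b)=2\sqrt{\alpha}$ and let $\lambda^*:=\lambda(b)$ be the unique minimizing exponent furnished by Proposition~\ref{pr:concavity}. The chain
\[
2\sqrt{\alpha}=\frac{-k(\lambda^*,b)}{\lambda^*}\ge \frac{\alpha+(\lambda^*)^2}{\lambda^*}\ge 2\sqrt{\alpha}
\]
forces both inequalities to be equalities, so $\lambda^*=\sqrt{\alpha}$ and $k(\sqrt{\alpha},b)=-2\alpha=H(\phi_0,\sqrt{\alpha},b)$. In other words, $\phi_0$ is itself a minimizer of $H(\cdot,\sqrt{\alpha},b)$ over $E_L$, so the first variation of $H$ at $\phi_0$ must vanish on the tangent space of the constraint $\int_0^L\phi^2=1$.

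The heart of the proof is then the first-variation computation at $\phi_0$. Taking $\eta\in H^1_{L\text{-per}}$ and introducing a Lagrange multiplier $\mu$ for $\int_0^L\phi^2=1$, the three summands of $H$ contribute $0$, $-\frac{2}{\sqrt{L}}\int_0^L b\eta\,dx$ and $-\frac{2\alpha}{\sqrt{L}}\int_0^L\eta\,dx$ respectively (using $\phi_0'=0$, $\int_0^L\phi_0^{-2}=L^2$, and $\lambda^2=\alpha$). After clearing the common factor, the Euler--Lagrange identity reduces to
\[
\int_0^L\bigl(b(x)+\alpha+\mu\bigr)\eta(x)\,dx=0\quad\text{for every }\eta\in H^1_{L\text{-per}}.
\]
Density of $H^1_{L\text{-per}}$ in $L^2_{L\text{-per}}$ forces $b\equiv -\alpha-\mu$ almost everywhere, and averaging against $\frac{1}{L}\int_0^L b=\alpha$ gives $\mu=-2\alpha$ and $b\equiv\alpha$. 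The converse is immediate: for the constant coefficient $b\equiv\alpha$, the constant $\psi\equiv 1$ is the principal eigenfunction of $-L_{\lambda,\alpha}$, yielding $k(\lambda,\alpha)=-(\alpha+\lambda^2)$ and thus $c^*(\alpha)=2\sqrt{\alpha}$. The only delicate piece is extracting $b\equiv\alpha$ from the fact that $\phi_0$ realizes the Nadin minimum; once the variational identity above is written down, the rest is routine.
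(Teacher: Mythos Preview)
Your proof is correct and follows exactly the route the paper indicates: the proposition is listed among the ``consequences of Nadin's formula'' with no further proof, and testing Nadin's functional with the constant $\phi_0=1/\sqrt{L}$ is the natural way to obtain the bound. Your treatment of the equality case---showing that $\phi_0$ must then realize the minimum in \eqref{Nf} at $\lambda=\sqrt{\alpha}$ and reading off $b\equiv\text{const}$ from the first variation---is a clean and correct way to supply the details the paper omits.
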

\end{remark}

\begin{proof}[Proof of Theorem \ref{th:existence L^1 L^{infty}}]
Define
\[
E_{L}^{*}:=\{\phi\in E_{L}\mid \phi^{*}=\phi\},\
A_{1}^{*}:=\{b\in A_{1}\mid b=b^{*}\}.
\]
Here we recall that
$u^{*}$
is the Schwarz periodic rearrangement of
an $L$-periodic measurable function $u$.
First we prove that
\[
c^*(b_1)=\max_{b\in A_1}\,c^*(b),
\]
where
$b_{1}:=\underset{k\in{\mathbb Z}}{\sum}h\chi_{I_{k}}$,
$I_{k}=(k+1/2)L+[-\alpha L/(2h),\alpha L/(2h)]$.
Applying the Polya inequality
\[
\int_{0}^{L}\phi^{*\prime2}dx
\leq\int_{0}^{L}\phi^{\prime2}dx
\]
and the Hardy--Littlewood inequality, we get that
\[
\int_{0}^{L}b\phi^{2}dx
\leq\int_{0}^{L}b^{*}\phi^{*2}\ \
(b\geq0, \phi\geq0),
\]
\begin{equation}\label{eq:minmin}
\begin{split}
\min_{b\in A_{1}}&\,k(\lambda,b)
=\min_{b\in A_{1}}\min_{\phi\in E_{L}}H_{L}(\phi,\lambda,b)
=\min_{\phi\in E_{L}^{*}}\min_{b\in A_{1}^{*}}
H_{L}(\phi,\lambda,b)\\
=&\min_{\phi\in E_{L}^{*}}
\Big\{
\int_{0}^{L}\phi^{\prime2}dx
-\max_{b\in A_{1}^{*}}
\Big\{\int_{0}^{L}b\phi^{2}dx\Big\}
-\frac{\lambda^{2}L^{2}}{\int_{0}^{L}\phi^{-2}dx}
\Big\}.
\end{split}
\end{equation}
For the reason that
$\phi\in A_{1}^{*}$
is symmetric with respect to
$x=L/2$
and nondecreasing in
$(0,L/2)$,
\begin{equation}
\label{eq:maximality b_{1}}
\int_{0}^{L}b_{1}\phi^{2}dx
=\max_{b\in A_{1}^{*}}\int_{0}^{L}b\phi^{2}dx.
\end{equation}
Indeed, for $b\in A_1$, since $\int_0^Lbdx=\int_0^L b_1 dx$ and $0\leq b(x)\leq h$,
\[
\begin{split}
&b_1(x)-b(x)\geq0\ \ {\rm for}\ \
(1-\alpha/h)\frac{L}{2}\leq x\leq(1+\alpha/h)\frac{L}{2},\\
&\int_{(1-\alpha/h)\frac{L}{2}}^{(1+\alpha/h)\frac{L}{2}}(b_1-b)dx
=\int_0^{(1-\alpha/h)\frac{L}{2}}bdx
+\int_{(1+\alpha/h)\frac{L}{2}}^L bdx.
\end{split}
\]
Thus, if we put $l_*:=(1-\alpha/h)\frac{L}{2}$, $l^*:=(1+\alpha/h)\frac{L}{2}$, then
\[
\begin{split}
\int_0^L b\phi^2dx
&\leq\int_{l_*}^{l^*}b\phi^2dx
+\phi(l_*)^2\Big(\int_0^{l_*}bdx
+\int_{l^*}^L bdx\Big)\\
&
=\int_{l_*}^{l^*}b\phi^2dx
+\phi(l_*)^2\int_{l_*}^{l^*}(b_1-b)dx\\
&\leq\int_{l_*}^{l^*}b\phi^2dx
+\int_{l_*}^{l^*}(b_1-b)\phi^2dx=\int_0^L b_1\phi^2dx.
\end{split}
\]
Hence \eqref{eq:maximality b_{1}} holds.
Therefore for all
$\lambda>0$
and
$b\in A_{1}$,
\[
k(\lambda,b)\geq k(\lambda,b_{1}).\ \ \therefore
-k(\lambda,b)/\lambda\leq-k(\lambda,b_{1})/\lambda.
\]
\[
\therefore
c^{*}(b)
=\min_{\lambda>0}\,(-k(\lambda,b)/\lambda)
\leq\min_{\lambda>0}\,(-k(\lambda,b_{1})/\lambda)
=c^{*}(b_{1}).
\]
Thus, the maximum of
$c^{*}$
in
$A_{1}$
is attained at the function
$b_{1}$.

Next we prove the uniqueness of the maximizer.
Let
$b_{2}\in A_{1}$
be another maximizer of
$c^{*}$
in
$A_{1}$.
First we claim that
\begin{equation}\label{eq:symmetry of b_{2}}
{\rm there\ exists\ an}\ \
x_{0}\in[0,L) \ \
{\rm such\ that}\ \
b_{2}(\cdot-x_{0})^{*}=b_{2}(\cdot-x_{0}).
\end{equation}
\
The claim \ref{eq:symmetry of b_{2}} is proved by contradiction.
Assume that
\[
\,b_{2}(\cdot-x_{0})^{*}
\not=b_{2}(\cdot-x_{0})
\ \ {\rm for}\ \
x_{0}\in[0,L).
\]
For arbitrarily chosen $\lambda>0$,
let
$\phi_{\lambda}=\phi_{\lambda,b_2}\in E_{L}$
be the function such that
\[
H(\phi_{\lambda},\lambda,b_{2})
=\underset{\phi\in E_{L}}{\min}H(\phi,\lambda,b_{2})
=k(\lambda,b_{2}).
\]
By translating
$b_{2}$
and
$\phi_{\lambda}$
in $x$ simultaneously,
we may assume that
$\phi_{\lambda}$
attains its maximum at
$x=L/2$.

If assume $\phi(L/2)=\underset{x\in[0,L]}{\max}\,\phi(x)$,
then, for the Polya inequality
\[
\int_0^L|\phi^{\prime}|^2dx\geq\int_0^L|\phi^{*\prime}|^2dx,
\]
the equality holds if and only if
$\phi^{*}=\phi$
and, for the Hardy--Littlewood inequality
\[
\int_0^L b\phi^2 dx\leq\int_0^L b^*\phi^{*2}dx,
\]
the equality holds if and only if
$\phi^{*}=\phi$
and
$b^{*}=b$.
Thus, by the assumption
$b_{2}^{*}\not=b_{2}$, we know that
\[
k(\lambda,b_{2})
=H(\phi_{\lambda},\lambda,b_{2})
>H(\phi_{\lambda}^{*},\lambda,b_{2}^{*})
\geq k(\lambda,b_{2}^{*}).
\]
Since
$\lambda>0$
is arbitrary, it is clear that
\[
\begin{split}
c^{*}(b_{2})
&=\min_{\lambda>0}\frac{-k(\lambda,b_{2})}{\lambda}
\leq\frac{-k(\lambda(b_2^*),b_2)}{\lambda(b_2^*)}\\
&<\frac{-k(\lambda(b_2^*),b_2^*)}{\lambda(b_2^*)}
=\min_{\lambda>0}\frac{-k(\lambda,b_{2}^{*})}{\lambda}
=c^{*}(b_{2}^{*}).
\end{split}
\]
This contradicts
$c^{*}(b_{2})
=\underset{b\in A_{1}}{\max}\,c^{*}(b)$.
Thus claim \eqref{eq:symmetry of b_{2}} holds.

By claim \eqref{eq:symmetry of b_{2}}, translating
$b_{2}$
in
$x$,
we may assume that
$b_{2}^{*}=b_{2}$.
Moreover, for the same reason as
in the proof of claim \eqref{eq:symmetry of b_{2}}, it is clear that
$\phi_{\lambda}^{*}=\phi_{\lambda}$.
Thus, if
$b_{2}\not=b_{1}$,
then we get that
$\int_{0}^{L}b_{2}\phi_{\lambda}^{2}dx
<\int_{0}^{L}b_{1}\phi_{\lambda}^{2}dx$
by using similar argument to that in the proof of \eqref{eq:maximality b_{1}}.
Hence
\[
\begin{split}
k(\lambda,b_{2})
&=\int_0^L\phi_{\lambda}^{\prime 2}dx-\int_0^L b_2\phi_{\lambda}^2dx
-\frac{\lambda^2L^2}{\int_0^L\phi_{\lambda}^{-2}dx}\\
&>\int_0^L\phi_{\lambda}^{\prime 2}dx-\int_0^L b_1\phi_{\lambda}^2dx
-\frac{\lambda^2L^2}{\int_0^L\phi_{\lambda}^{-2}dx}
\geq k(\lambda,b_{1})\ \ (\lambda>0).
\end{split}
\]
Therefore
\[
c^*(b_1)=\frac{-k(\lambda(b_1),b_1)}{\lambda(b_1)}>\frac{-k(\lambda(b_1),b_2)}{\lambda(b_1)}
\geq c^*(b_1).
\]
This contradicts
$c^{*}(b_{2})=\underset{b\in A_{1}}{\min}\,c^{*}(b)=c^{*}(b_{1})$.
The uniqueness is proved and the proof is completed.
\end{proof}
\section{Proof of existence of the maximizer under general constraints}
In this section, we prove Theorem \ref{th:existence}, the existence of the maximizer under the constraint
$\frac{1}{L}\int_0^L f(b)dx=\beta$, where $f$ satisfies the assumption \eqref{cd:superlinear}.
Throughout this paper, we shall define the positive functions
$\psi_{\lambda,b}$,
$\widetilde{\psi}_{\lambda,b}$
as the $L$-periodic principal eigenfunction of the operators
\[
\begin{split}
-L_{\lambda,b}=
&-\frac{d^{2}}{d x^{2}}-2\lambda\frac{d}{d x}
-(b(x)+\lambda^{2})I,\\
-L_{\lambda,b}^{*}
=&-\frac{d^{2}}{d x^{2}}+2\lambda\frac{d}{d x}
-(b(x)+\lambda^{2})I,
\end{split}
\]
satisfying
\[
\|\psi_{\lambda,b}\|_{L^2([0,L])}=\langle\widetilde{\psi}_{\lambda,b},\psi_{\lambda,b}\rangle=1
\]
where
$I$
is identity and
\[
\langle\phi_{1},\phi_{2}\rangle
=\int_{0}^{L}\phi_{1}\phi_{2}d x,\
\|\phi\|_{L^p([0,L])}^{p}=\int_{0}^{L}|\phi|^{p}d x.
\]
\begin{remark}[\cite{N}]
\label{rm:phi=sqrt{psiwidetilde{psi}}}
Let
$\phi_{\lambda,b}\in E_{L}$
be the nonnegative function such that
\[
k(\lambda,b)=H(\phi_{\lambda,b},\lambda,b)
=\min_{\phi\in E_{L}}H(\phi,\lambda,b).
\]
Then
we can find that
$\phi_{\lambda,b}
=\sqrt{\psi_{\lambda,b}\widetilde{\psi}_{\lambda,b}}$
from the proof of Nadin's formula in \cite{N}.
\end{remark}
\begin{remark}[\cite{LLM}]
Define $\Lambda:=\{\nu\mid \nu\ {\rm is\ a\ Radon\ measure\ on}\ \R\ {\rm such}$ that $\nu(L+A)=\nu(A)\ {\rm for}\ A\subset\R\ {\rm and}\
\nu([0,L))\geq0.\}$,
we can extend the domain of functionals $k(\lambda,b)$, $\lambda(b)$,
$c^*(b):=-k(\lambda(b),b)/\lambda(b)$
to $\nu\in\Lambda$ (see Proposition 2.15 of Liang, Lin and Matano \cite{LLM})
Otherwise, the following proposition holds:
\begin{proposition}
[Propositions 2.20 and 4.7 of \cite{LLM}]
\label{pr:w^*-continuity}
Let
$\{\nu_n\}\subset\Lambda$
be a sequence converging to some
$\nu\in\Lambda$
in the $\mbox{weak}^*$ sense, that means,
\[
\lim_{n\rightarrow\infty}\int_{\mathbb R}\phi(x) d\nu_n(x)
=\int_{\mathbb R}\phi(x)d\nu(x)\
\mbox{ for any }\phi\in C^{\infty}_{0}(\mathbb R).
\]
Then
\[
\lim_{n\rightarrow\infty}k(\lambda,\nu_n)=k(\lambda, \nu)
\]
locally uniformly in
$\lambda\in{\mathbb R}$,
and
\[
\lim_{n\rightarrow\infty}\lambda(\nu_n)=\lambda(\nu),\
\lim_{n\rightarrow\infty}c(\nu_n)=c(\nu).
\]
\end{proposition}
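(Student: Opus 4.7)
The plan is to prove pointwise convergence $k(\lambda,\nu_n)\to k(\lambda,\nu)$ for each fixed $\lambda$ by a two-sided comparison via Nadin's variational formula, then to upgrade to local uniform convergence in $\lambda$ using the concavity supplied by Proposition \ref{pr:concavity}, and finally to deduce the convergence of $\lambda(\nu_n)$ and $c(\nu_n)$ from strict concavity and the uniqueness of the minimizer of $-k(\lambda,\nu)/\lambda$.

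For the upper bound $\limsup_n k(\lambda,\nu_n)\le k(\lambda,\nu)$, I would fix an optimizer $\phi_*\in E_L$ of $H(\cdot,\lambda,\nu)$, which is positive and continuous since $\phi_*=\sqrt{\psi_{\lambda,\nu}\widetilde\psi_{\lambda,\nu}}$, and use it as a competitor for $\nu_n$:
\[
k(\lambda,\nu_n)\le H(\phi_*,\lambda,\nu_n)=\int_0^L\phi_*'^{\,2}\,dx-\int_0^L\phi_*^2\,d\nu_n-\frac{\lambda^2L^2}{\int_0^L\phi_*^{-2}\,dx}.
\]
Since $\phi_*^2$ is $L$-periodic and continuous, it can be $C^0$-approximated after multiplication by a smooth cut-off by $L$-periodized $C_0^\infty$ bumps, and therefore $\int_0^L\phi_*^2\,d\nu_n\to\int_0^L\phi_*^2\,d\nu$ by the very definition of weak-$*$ convergence, yielding $\limsup_n k(\lambda,\nu_n)\le k(\lambda,\nu)$.

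For the reverse inequality, let $\phi_n\in E_L$ be the optimizer associated with $\nu_n$. Weak-$*$ convergence forces $M:=\sup_n\nu_n([0,L))<\infty$; testing against the constant $L^{-1/2}$ in Nadin's formula for $\nu_n$ furnishes a uniform lower bound $k(\lambda,\nu_n)\ge-M/L-\lambda^2$, which combined with the upper bound of the previous step gives a uniform $H^1$ estimate on $\phi_n$. By Rellich compactness, along a subsequence $\phi_n\to\phi_\infty$ uniformly on $[0,L]$ with $\phi_n\rightharpoonup\phi_\infty$ in $H^1$ and $\|\phi_\infty\|_{L^2}=1$. A Harnack inequality for the measure-coefficient eigenfunctions $\psi_{\lambda,\nu_n}$ and $\widetilde\psi_{\lambda,\nu_n}$, as developed in \cite{LLM}, supplies a uniform positive lower bound $\phi_n\ge c_0>0$, so that $\phi_n^{-2}\to\phi_\infty^{-2}$ uniformly. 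Combining uniform convergence of $\phi_n^2$ with weak-$*$ convergence of $\nu_n$ handles the cross term $\int_0^L\phi_n^2\,d\nu_n\to\int_0^L\phi_\infty^2\,d\nu$, and weak lower semi-continuity of the Dirichlet energy closes the argument:
\[
\liminf_n k(\lambda,\nu_n)\ge H(\phi_\infty,\lambda,\nu)\ge k(\lambda,\nu).
\]

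Pointwise convergence of the concave functions $\lambda\mapsto k(\lambda,\nu_n)$ is automatically locally uniform on $\mathbb{R}$. For $\lambda(\nu_n)\to\lambda(\nu)$, I would check that $-k(\lambda,\nu_n)/\lambda\to+\infty$ as $\lambda\to 0^+$ and as $\lambda\to\infty$ uniformly in $n$, so that $\{\lambda(\nu_n)\}$ is confined to a compact subset of $(0,\infty)$; any accumulation point is then a minimizer of $-k(\lambda,\nu)/\lambda$ and must equal $\lambda(\nu)$ by the strict concavity of Proposition \ref{pr:concavity}. The convergence $c(\nu_n)=-k(\lambda(\nu_n),\nu_n)/\lambda(\nu_n)\to c(\nu)$ is then immediate. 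The main obstacle is the uniform lower bound $\phi_n\ge c_0$: the variational formula by itself gives no control on $\int\phi_n^{-2}dx$, so one must pass through the spectral formulation $-L_{\lambda,\nu_n}\psi=k\psi$ and import Harnack-type estimates adapted to measure-valued coefficients, which is precisely the technical content of the cited propositions in \cite{LLM}.
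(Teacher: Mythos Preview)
The paper does not supply its own proof of this proposition: it is simply quoted as Propositions~2.20 and~4.7 of \cite{LLM}, so there is no in-paper argument to compare against. Your outline is a reasonable reconstruction of how such a result is proved, and you correctly isolate the genuine difficulty---the uniform positive lower bound on the minimizers $\phi_n$, which cannot be extracted from Nadin's formula alone and must come from a Harnack-type estimate for the measure-valued eigenproblem, which is precisely the content imported from \cite{LLM}.

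There is, however, one concrete slip. Testing the constant $L^{-1/2}$ in Nadin's formula gives
\[
H(L^{-1/2},\lambda,\nu_n)=0-\tfrac{1}{L}\,\nu_n([0,L))-\lambda^2,
\]
and since $k(\lambda,\nu_n)=\min_{\phi\in E_L}H(\phi,\lambda,\nu_n)$, this is an \emph{upper} bound $k(\lambda,\nu_n)\le -\nu_n([0,L))/L-\lambda^2$, not the lower bound you claim. A uniform lower bound on $k(\lambda,\nu_n)$ does hold---for instance the estimate $-k(\lambda,b)\le \alpha+\alpha^2L^2+\lambda^2$ with $\alpha=\nu_n([0,L))/L$, quoted later in the paper as \eqref{eq:estimate k} from \cite{LLM}---but it does not come from plugging a competitor into the variational formula. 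Alternatively, the $H^1$ bound on $\phi_n$ follows directly from the upper bound on $k(\lambda,\nu_n)$ combined with the one-dimensional interpolation $\|\phi_n\|_\infty^2\le 1/L+2\|\phi_n'\|_{L^2}$, which turns the identity $\|\phi_n'\|_{L^2}^2=k(\lambda,\nu_n)+\int\phi_n^2\,d\nu_n+\lambda^2L^2/\int\phi_n^{-2}$ into a quadratic inequality in $\|\phi_n'\|_{L^2}$. With this correction your outline goes through.
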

\end{remark}

Before proving Theorem \ref{th:existence},
we calculate the derivative of the minimal pulsating traveling wave speed $c^*(b)$.
The regularity of $c^*(b)$ comes from the following two lemmas.
The proof of Lemma \ref{lm:smoothness of k} will be given in the appendix.
\begin{lemma}[G$\hat{\rm a}$teaux derivative of $k(\lambda,b)$]
\label{lm:smoothness of k}
For $p\in[1,\infty)$, the maps
\[
\begin{split}
&\R\times L^p_{L\mbox{\rm\tiny-per}}\ni(\lambda,b)\mapsto k(\lambda,b)\in\R,\\
&\R\times L^p_{L\mbox{\rm\tiny-per}}\ni
(\lambda,b)\mapsto \psi_{\lambda,b},\widetilde{\psi}_{\lambda,b}\in W^{2,p}_{L\mbox{\rm\tiny-per}}
\end{split}
\]
are analytic and
\begin{equation}\label{eq:derivative of k}
\begin{split}
\partial_b k(\lambda,b)[v]
&:=\lim_{h\rightarrow0}\frac{k(\lambda,b+h v)-k(\lambda,b)}{h}\\
&=-\langle\psi_{\lambda,b}\widetilde{\psi}_{\lambda,b},v\rangle
=-\int_0^L\psi_{\lambda,b}\widetilde{\psi}_{\lambda,b}v d x.
\end{split}
\end{equation}
\end{lemma}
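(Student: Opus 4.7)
The plan is to treat $(\lambda,b) \mapsto -L_{\lambda,b}$ as an analytic family of closed operators from $W^{2,p}_{L\text{-per}}$ into $L^p_{L\text{-per}}$ and invoke Kato's analytic perturbation theory for a simple isolated eigenvalue, then obtain the derivative formula by differentiating the eigenvalue equation and pairing with the adjoint eigenfunction.

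First I would check the joint analyticity of the operator family. The dependence of $-L_{\lambda,b}$ on $\lambda$ is polynomial, and the dependence on $b$ enters only through the multiplication operator $v \mapsto b v$, which is bounded from $W^{2,p}_{L\text{-per}}$ into $L^p_{L\text{-per}}$ because $W^{2,p}_{L\text{-per}}$ embeds continuously into $L^\infty$ in one space dimension for every $p \in [1,\infty)$. Hence $(\lambda,b) \mapsto -L_{\lambda,b} \in \mathcal{L}(W^{2,p}_{L\text{-per}},L^p_{L\text{-per}})$ is (jointly) real-analytic, and it extends holomorphically to a complex neighborhood of every real point. Next I would recall, via Krein--Rutman applied to the resolvent (which is compact and strongly positive on the cone of nonnegative periodic functions), that for real $(\lambda,b)$ the principal eigenvalue $k(\lambda,b)$ is simple, isolated in the spectrum, and admits a strictly positive eigenfunction $\psi_{\lambda,b}$; the same applies to $-L_{\lambda,b}^{\ast}$, yielding $\widetilde{\psi}_{\lambda,b}>0$.

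With simplicity and isolation of $k(\lambda,b)$, Kato's perturbation theorem for holomorphic families of type (A) gives that the spectral projector, the eigenvalue, and any analytically chosen eigenvector are all (jointly) analytic in $(\lambda,b)$. The normalizations $\|\psi_{\lambda,b}\|_{L^2}=1$ and $\langle\widetilde{\psi}_{\lambda,b},\psi_{\lambda,b}\rangle=1$ can be enforced smoothly because $\psi_{\lambda,b}$ and $\widetilde{\psi}_{\lambda,b}$ stay strictly positive under small perturbations, so analyticity is preserved.

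Finally, for the G\^ateaux derivative, I would differentiate
\[
-L_{\lambda,b}\,\psi_{\lambda,b}=k(\lambda,b)\,\psi_{\lambda,b}
\]
in the direction $v$ at fixed $\lambda$. Writing $\psi_1=\partial_b\psi_{\lambda,b}[v]$ and $k_1=\partial_b k(\lambda,b)[v]$, one gets
\[
(-L_{\lambda,b}-k)\psi_1 \;=\; v\,\psi_{\lambda,b}+k_1\,\psi_{\lambda,b}.
\]
Pairing with $\widetilde{\psi}_{\lambda,b}$ and using $(-L_{\lambda,b}^{\ast}-k)\widetilde{\psi}_{\lambda,b}=0$ together with the normalization $\langle\widetilde{\psi}_{\lambda,b},\psi_{\lambda,b}\rangle=1$ yields $k_1=-\langle\psi_{\lambda,b}\widetilde{\psi}_{\lambda,b},v\rangle$, which is \eqref{eq:derivative of k}. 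The main technical hurdle is the clean deployment of Kato's theorem, i.e.\ verifying that the perturbation $b\mapsto b\cdot$ fits the type-(A) framework uniformly in a neighborhood in $L^p$; the rest of the argument is a standard Fredholm-alternative computation.
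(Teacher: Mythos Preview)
Your argument is correct and is, at the level of ideas, the same as the paper's: analyticity of the principal eigenvalue and eigenfunctions follows from analytic perturbation of a simple isolated eigenvalue, and the derivative formula comes from differentiating the eigen-equation and pairing against the adjoint eigenfunction. The only real difference is packaging. You invoke Kato's type-(A) theorem and Krein--Rutman as black boxes, whereas the paper carries out the perturbation theory explicitly: it writes the spectral projector $\psi_{\lambda,b}\otimes\widetilde{\psi}_{\lambda,b}$ and $k(\lambda,b)\,\psi_{\lambda,b}\otimes\widetilde{\psi}_{\lambda,b}$ as Cauchy contour integrals of the resolvent $R_{\lambda,b}(\mu)=(\mu I+L_{\lambda,b})^{-1}$, expands the resolvent in a Neumann series around $(\lambda_0,b_0)$ to obtain analyticity, and reads off $\partial_b k$ from the first-order term $-\eta R_{\lambda_0,b_0}(vI)R_{\lambda_0,b_0}$ in that expansion. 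Your route is shorter and cleaner if one is willing to cite Kato; the paper's route is self-contained and makes the dependence on $b$ completely explicit, which is convenient when one later needs the second derivative $\partial_b^2 k$ (Lemma~\ref{lm:partial^2_b k}).
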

\begin{lemma}\label{lm:smoothness of lambda}
Let $\lambda(b)$ be the functional defined as
\[
\frac{-k(\lambda(b),b)}{\lambda(b)}=\min_{\lambda>0}\frac{-k(\lambda,b)}{\lambda}.
\]
Then $L^{1,+}_{L\mbox{\rm\tiny-per}}\ni b\mapsto\lambda(b)$ is analytic and
\begin{equation}\label{eq:derivative of lambda}
\partial \lambda(b)[v]
=\frac{\lambda(b)\partial_{b,\lambda}^2k(\lambda(b),b)[v]-\partial_b k(\lambda(b),b)[v]}
{\lambda(b)\partial^2_{\lambda} k(\lambda(b),b)}.
\end{equation}
\end{lemma}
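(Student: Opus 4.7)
The plan is to realize $\lambda(b)$ as the implicit solution of the first-order optimality condition associated with $\lambda \mapsto -k(\lambda,b)/\lambda$, and then to invoke the analytic implicit function theorem. By Proposition \ref{pr:concavity}, for each $b \in L^{1,+}_{L\text{-per}}$ the quotient $-k(\lambda,b)/\lambda$ has a unique positive minimizer $\lambda(b)$, which must satisfy the stationarity equation $\partial_\lambda(-k(\lambda,b)/\lambda) = 0$. After multiplying by $\lambda^2$ to clear the denominator and a sign, I rewrite this condition as
\begin{equation*}
G(\lambda,b) := k(\lambda,b) - \lambda\,\partial_\lambda k(\lambda,b) = 0.
\end{equation*}
By Lemma \ref{lm:smoothness of k}, the map $(\lambda,b) \mapsto k(\lambda,b)$ is analytic from $\R \times L^1_{L\text{-per}}$ to $\R$, and hence so are $\partial_\lambda k$ and $G$.

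Next I verify the hypothesis of the analytic implicit function theorem at an arbitrary base point $(\lambda(b_0),b_0)$. A direct computation gives $\partial_\lambda G = -\lambda\,\partial_\lambda^2 k$, and the strict concavity of $\lambda \mapsto k(\lambda,b_0)$ asserted by Proposition \ref{pr:concavity} forces $\partial_\lambda^2 k(\lambda(b_0),b_0) < 0$. Combined with $\lambda(b_0) > 0$, this yields $\partial_\lambda G(\lambda(b_0),b_0) > 0$. Since the unknown $\lambda$ is a scalar, this nonvanishing suffices for the Banach-space analytic implicit function theorem, producing an analytic map $\tilde{\lambda}$ on an $L^1$-neighborhood of $b_0$ with $G(\tilde{\lambda}(b),b) = 0$. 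The uniqueness clause of Proposition \ref{pr:concavity} identifies $\tilde{\lambda}$ with $\lambda$, establishing analyticity at every $b_0 \in L^{1,+}_{L\text{-per}}$.

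For the derivative formula, I differentiate the defining identity $G(\lambda(b),b) = 0$ in the direction $v$:
\begin{equation*}
\partial_\lambda G(\lambda(b),b)\cdot \partial\lambda(b)[v] + \partial_b G(\lambda(b),b)[v] = 0,
\end{equation*}
observe that $\partial_b G[v] = \partial_b k[v] - \lambda\,\partial_{b,\lambda}^2 k[v]$, and solve the scalar linear equation using $\partial_\lambda G = -\lambda\,\partial_\lambda^2 k$ to arrive at \eqref{eq:derivative of lambda}.

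I do not anticipate any serious obstacle: the one potentially delicate step would be applying the implicit function theorem in an infinite-dimensional setting, but because we are only solving for the scalar $\lambda$ in terms of $b$, the required invertibility of $\partial_\lambda G$ reduces to the nonvanishing of a single real number, which is immediate from strict concavity. The rest of the argument is an automatic consequence of Lemma \ref{lm:smoothness of k} and Proposition \ref{pr:concavity}.
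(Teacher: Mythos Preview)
Your argument is correct and essentially identical to the paper's: both define the same function $G(\lambda,b)=k(\lambda,b)-\lambda\,\partial_\lambda k(\lambda,b)$ (called $F$ in the paper), invoke Lemma~\ref{lm:smoothness of k} for its analyticity, use the strict concavity from Proposition~\ref{pr:concavity} to get $\partial_\lambda G(\lambda(b),b)=-\lambda(b)\,\partial_\lambda^2 k(\lambda(b),b)>0$, and then apply the analytic implicit function theorem and differentiate the implicit relation to obtain \eqref{eq:derivative of lambda}.
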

\begin{proof}
The positive quantity
$\lambda(b)$
is uniquely determined by
\[
-k(\lambda(b),b)/\lambda(b)
=\min_{\lambda>0}\,(-k(\lambda,b)/\lambda)
\ (=c^{*}(b)).
\]
Thus
$\lambda(b)>0$
satisfies
\[
0=
\frac{d}{d\lambda}(-k(\lambda,b)/\lambda)_{\lambda=\lambda(b)}
=-\partial_{\lambda}k(\lambda(b),b)/\lambda(b)
+k(\lambda(b),b)/\lambda(b)^{2}.
\]
\begin{equation}\label{eq:implicit of lambda(b)}
\therefore
F(\lambda(b),b):=
-\lambda(b)\partial_{\lambda}k(\lambda(b),b)+k(\lambda(b),b)=0.
\end{equation}
Since Lemma \ref{lm:smoothness of k}, we can conclude that the function
${\mathbb R}\times L^{1}_{L\mbox{\tiny-per}}
\ni(\lambda,b)\mapsto F(\lambda,b)\in{\mathbb R}$
is analytic.
Moreover, from Proposition \ref{pr:concavity}, for
$b\in L^{1,+}_{L\mbox{\tiny-per}}$, we get
\[
\partial_{\lambda}F(\lambda(b),b)
=
\frac{d}{d\lambda}
(-\lambda \partial_{\lambda}k(\lambda,b)
+k(\lambda,b))_{\lambda=\lambda(b)}
=
-\lambda(b)\partial_{\lambda}^{2}k(\lambda(b),b)
>0.
\]
Therefore the implicit function theorem implies that
the function
$L^{1,+}_{L\mbox{\tiny-per}}\ni b\mapsto
\lambda(b)\in(0,\infty)$
is analytic and
\[
\partial\lambda(b)=
\frac{-\partial_b F}{\partial_{\lambda}F}
=\frac{
-\lambda(b)\partial^2_{b,\lambda}k(\lambda(b),b)
+\partial_{b}k(\lambda(b),b)}
{\lambda(b)\partial^2_{\lambda}k(\lambda(b),b)}.
\]
\end{proof}
\begin{proposition}
\label{pr:derivative of c(b)}
The functional
\[
L^{1,+}_{L\mbox{\tiny-per}}(\R)\ni b\mapsto c^*(b):=-k(\lambda(b),b)/\lambda(b)
\]
is analytic and
\begin{equation}\label{eq:derivative of c(b)}
\partial c^*(b)[v]
=\frac{1}{\lambda(b)}\langle\psi_b\widetilde{\psi}_b,v\rangle,
\end{equation}
where $\psi_b=\psi_{\lambda(b),b}$,
$\widetilde{\psi}_b=\widetilde{\psi}_{\lambda(b),b}$.
\end{proposition}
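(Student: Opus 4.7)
The plan is to combine Lemmas \ref{lm:smoothness of k} and \ref{lm:smoothness of lambda} with the chain rule, invoking an envelope-type cancellation that exploits the defining relation for $\lambda(b)$. Analyticity is almost immediate: by Lemma \ref{lm:smoothness of lambda}, the map $b\mapsto\lambda(b)$ is analytic into $(0,\infty)$; by Lemma \ref{lm:smoothness of k}, $(\lambda,b)\mapsto k(\lambda,b)$ is analytic; and division by the nonvanishing factor $\lambda(b)$ preserves analyticity. Hence $b\mapsto c^*(b)=-k(\lambda(b),b)/\lambda(b)$ is analytic on $L^{1,+}_{L\mbox{\tiny-per}}$.

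For the derivative, I would apply the chain/quotient rule to $c^*(b)=-k(\lambda(b),b)/\lambda(b)$. Writing $\lambda=\lambda(b)$ and $k=k(\lambda(b),b)$ for brevity,
\[
\partial c^*(b)[v]=-\frac{1}{\lambda}\bigl(\partial_\lambda k\cdot \partial\lambda(b)[v]+\partial_b k[v]\bigr)+\frac{k}{\lambda^2}\partial\lambda(b)[v]=-\frac{\partial_b k(\lambda,b)[v]}{\lambda}+\frac{\partial\lambda(b)[v]}{\lambda}\Bigl(-\partial_\lambda k+\frac{k}{\lambda}\Bigr).
\]
The crucial step is now to invoke the implicit equation \eqref{eq:implicit of lambda(b)}, which asserts $-\lambda\partial_\lambda k(\lambda,b)+k(\lambda,b)=0$ at $\lambda=\lambda(b)$; equivalently $-\partial_\lambda k+k/\lambda=0$. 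Thus the coefficient of $\partial\lambda(b)[v]$ vanishes identically, and only the $\partial_b k$ contribution survives. Substituting the formula \eqref{eq:derivative of k} from Lemma \ref{lm:smoothness of k} yields
\[
\partial c^*(b)[v]=-\frac{1}{\lambda(b)}\bigl(-\langle\psi_b\widetilde{\psi}_b,v\rangle\bigr)=\frac{1}{\lambda(b)}\langle\psi_b\widetilde{\psi}_b,v\rangle,
\]
which is the claimed identity.

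There is no genuine obstacle beyond the bookkeeping; the envelope-type cancellation is the one substantive observation, and it is automatically available because $\lambda(b)$ is defined as the optimizer of $\lambda\mapsto -k(\lambda,b)/\lambda$, so the partial in $\lambda$ at $\lambda(b)$ vanishes and kills the (otherwise complicated) term arising from $\partial\lambda(b)[v]$ given in Lemma \ref{lm:smoothness of lambda}. This is exactly why the final formula depends only on the eigenfunction product $\psi_b\widetilde{\psi}_b$ and not on any second-order information in $\lambda$.
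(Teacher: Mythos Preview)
Your proof is correct and follows essentially the same route as the paper: analyticity from Lemmas \ref{lm:smoothness of k} and \ref{lm:smoothness of lambda}, then the chain/quotient rule applied to $-k(\lambda(b),b)/\lambda(b)$, with the $\partial\lambda(b)[v]$ term eliminated via the optimality relation \eqref{eq:implicit of lambda(b)} and the remaining $\partial_b k$ term identified by \eqref{eq:derivative of k}. The paper's computation is line-for-line the same, merely grouping the vanishing factor as $-\frac{\partial\lambda(b)[v]}{\lambda(b)^2}\{\lambda(b)\partial_\lambda k(\lambda(b),b)-k(\lambda(b),b)\}$ rather than your $\frac{\partial\lambda(b)[v]}{\lambda}\bigl(-\partial_\lambda k+k/\lambda\bigr)$.
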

\begin{proof}
The analyticity of the functional
\[
L^{1,+}_{L\mbox{\tiny-per}}(\R)\ni b\mapsto c^*(b)=-k(\lambda(b),b)/\lambda(b)
\]
is a clear consequence of Lemmas \ref{lm:smoothness of k} and \ref{lm:smoothness of lambda}.
The results we computed before \eqref{eq:derivative of k}, \eqref{eq:derivative of lambda} and \eqref{eq:implicit of lambda(b)} implies that
\[
\begin{split}
\partial c^*(b)[v]=&\frac{-1}{\lambda(b)}\partial_b k(\lambda(b),b)[v]
-\frac{\partial\lambda(b)[v]}{\lambda(b)^2}
\big\{\lambda(b)\partial_{\lambda}k(\lambda(b),b)-k(\lambda(b),b)\big\}\\
=&\frac{-1}{\lambda(b)}\partial_b k(\lambda(b),b)[v]
=\frac{1}{\lambda(b)}\langle\psi_b\widetilde{\psi}_b,v\rangle.
\end{split}
\]
\end{proof}
Now we can start to prove Theorem \ref{th:existence}.
\begin{proof}
[Proof of Theorem \ref{th:existence}]
Let $\{b_n\}\subset A_f$ be a maximizing sequence of $c^*(b)$, namely,
$\underset{n\rightarrow\infty}{\lim}\,c^*(b_n)=\underset{b\in A_f}{\sup}\,c^*(b)$.
From Proposition \ref{pr:S-rearrangement effect}, it is clear that $b_n(x)$ is symmetric with respect to
$x=L/2$ and nondecreasing in $x\in[0,L/2]$ for each $n\in\N$.
Since $\int_0^L f(b_n)dx=\beta L<\infty$ and $\underset{b\rightarrow\infty}{\lim}\,f(b)/b=\infty$,
we get that
\[
\sup_{n\in\N}\int_{\{x\in[0,L]\mid b_n(x)\geq N\}} b_n dx
\leq \sup_{n\in\N}\int_0^L b_n dx\, \sup_{b\geq N}\frac{b}{f(b)}
=\beta L\,\sup_{b\geq N}\frac{b}{f(b)}\overset{N\rightarrow\infty}{\longrightarrow}0.
\]
Hence, we can conclude that $\{b_n\}$ is uniformly integrable.

From the periodicity,
the uniform integrability and the monotonicity of $\{b_n\}$ in $x\in[0,L/2]$,
there exists a subsequence $\{b_{n_k}\}$ and an $L$-periodic nonnegative function $b_*$ such that
\[
b_{n_k}\rightarrow b_*\ \ {\rm in}\ \ L^1_{\rm loc},\ \
b_{n_k}(x)\rightarrow b_*(x)\ \ {\rm a.e.}\ \ x\in[0,L]\ \
{\rm as}\ \ k\rightarrow\infty.
\]
By Proposition \ref{pr:w^*-continuity}, we have that
\[
0<\sup_{b\in A_f}\,c^*(b)=\lim_{k\rightarrow\infty}\,c^*(b_{n_k})=c^*(b_*),
\]
and $b_*\not=0$ because of $c^*(0)=0$.

On the other hand, by using Fatou's lemma,
$\underset{k\rightarrow\infty}{\lim}\,f(b_{n_k}(x))=f(b_*(x))$ a.e. on $[0,L]$ implies that
\[
0<\int_0^L f(b_*)dx\leq\liminf_{k\rightarrow\infty}\int_0^L f(b_{n_k})dx=\beta L.
\]
If it is true that $\int_0^L f(b_*)dx=\beta L$, then we can conclude immediately  $b_*\in A_f$ is a maximizer of $c^*(b)$.
Now we prove this argument by contradiction.

Suppose $\int_0^L f(b_*)dx<\beta L$ and define $b_{\ep}(x):=\max\{b_*(x),\ep\}$.
Since $\int_0^L f(b_{0})dx=\int_0^L f(b_*)dx<\beta L$ and $\underset{\ep\rightarrow\infty}{\lim}\,\int_0^L f(b_{\ep})dx=\infty$,
from the continuity of $\ep\mapsto\int_0^L f(b_{\ep})dx$,
we find that there exists an $\ep_*>0$ such that
\[
b_{\ep_*}\in A_f\ \ {\rm and}\ \ b_*\leq b_{\ep_*},\ b_*\not=b_{\ep_*}.
\]
Recalling that $d b_{\ep}(\cdot)/d\ep=\chi_{\{x\mid b(x)<\ep\}}(\cdot)$ and also
Proposition \ref{pr:derivative of c(b)}, we get that
\[
\begin{split}
\frac{d}{d\epsilon}c^*(b_{\ep})&=
\frac{1}{\lambda(b_{\ep})}
\Big\langle\psi_{b_{\ep}}\widetilde{\psi}_{b_{\ep}},\frac{d b_{\ep}(\cdot)}{d\ep}\Big\rangle
\geq0,\,\not\equiv0\ \ {\rm for}\ \ \ep\in[0,\ep_*].\\
&\therefore c^*(b_*)<c^*(b_{\ep_*})\leq\sup_{b\in A_f}\,c^*(b)=c^*(b_*).
\end{split}
\]
This contradiction completes the proof.
\end{proof}

\section{Derivation of the Euler--Lagrange equation under $L^2$ constraints}
In this section, we consider a particular case when $f(b)=b^2$.
The next lemma plays a key role in deriving the Euler--Lagrange equation which the maximizer of
$c^*( b)$ satisfies under $L^2$ constraint $\frac{1}{L}\int_0^L b^2 dx=\beta$.
\begin{lemma}\label{lm:critical point}
Let
$f$
be a smooth function with
$f'(b)>0$ $(b>0)$ and $f'(0)=0$.
Then, any maximizer
$b_*\in A_f$
of
$c^*(b)$
satisfies:
\begin{equation}\label{eq:critical}
\psi_{b_*}\widetilde{\psi}_{b_*}=f'(b_*)\Big/\int_0^L f'(b_*)d x,
\end{equation}
where
$\psi_b=\psi_{\lambda(b),b}$,
$\widetilde{\psi}_b=\widetilde{\psi}_{\lambda(b),b}$.
\end{lemma}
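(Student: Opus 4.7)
The plan is to combine two ingredients: first, to show that any maximizer $b_*$ must be strictly positive almost everywhere, and second, to apply a Lagrange-multiplier argument on the region where $b_*>0$, using the G\^ateaux derivative formula $\partial c^*(b)[v]=\lambda(b)^{-1}\langle\psi_b\widetilde\psi_b,v\rangle$ supplied by Proposition \ref{pr:derivative of c(b)}. Throughout I will use that the principal eigenfunctions $\psi_{b_*},\widetilde\psi_{b_*}$ are strictly positive on the circle (standard elliptic principal-eigenfunction theory), and that Remark \ref{rm:phi=sqrt{psiwidetilde{psi}}} combined with $\phi_{\lambda(b_*),b_*}\in E_L$ gives the normalization $\int_0^L\psi_{b_*}\widetilde\psi_{b_*}\,dx=1$.

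For the positivity step, set $N:=\{x\in[0,L]\mid b_*(x)=0\}$ and argue by contradiction that $|N|=0$. If $|N|>0$, pick a nonnegative bounded $v$ supported in $N$ with $v\not\equiv 0$, and a function $w$ supported on a compact subset of $\{b_*\geq\kappa\}$ for some $\kappa>0$ (such a subset exists because $b_*\not\equiv 0$, otherwise $c^*(b_*)=0$, contradicting maximality since $A_f$ contains non-constant positive functions). Consider the two-parameter family $b_{\ep,\delta}:=b_*+\ep v+\delta w$. Because $f'(0)=0$, a Taylor expansion yields $f(\ep v)-f(0)=O(\ep^2)$ pointwise on $N$, while $f(b_*+\delta w)-f(b_*)=\delta f'(b_*)w+O(\delta^2)$ on the support of $w$. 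The constraint deviation is therefore
\[
\tfrac{1}{L}\!\int_0^L f(b_{\ep,\delta})\,dx-\beta
=\tfrac{\delta}{L}\!\int_0^L f'(b_*)w\,dx+O(\ep^2)+O(\delta^2),
\]
and since $\int f'(b_*)w\,dx\neq 0$, the implicit function theorem furnishes $\delta(\ep)=O(\ep^2)$ with $b_{\ep,\delta(\ep)}\in A_f$ for small $\ep\geq 0$. Proposition \ref{pr:derivative of c(b)} then gives
\[
c^*(b_{\ep,\delta(\ep)})-c^*(b_*)
=\frac{\ep}{\lambda(b_*)}\,\langle\psi_{b_*}\widetilde\psi_{b_*},v\rangle+O(\ep^2),
\]
whose leading coefficient is strictly positive by positivity of $\psi_{b_*}\widetilde\psi_{b_*}$. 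For small $\ep>0$ this contradicts the maximality of $b_*$; hence $|N|=0$.

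For the Lagrange-multiplier step, fix any two bounded test functions $v_1,v_2$ each supported on compact subsets of $\{b_*>0\}$ with $\int_0^L f'(b_*)v_2\,dx\neq 0$. Using the implicit function theorem, determine $\ep_2=\ep_2(\ep_1)$ so that $b_*+\ep_1 v_1+\ep_2(\ep_1) v_2\in A_f$ for all small $\ep_1$; the first-order condition produces $\ep_2'(0)=-\int f'(b_*)v_1\,dx\big/\!\int f'(b_*)v_2\,dx$. Differentiating $c^*$ along this curve at $\ep_1=0$ and invoking Proposition \ref{pr:derivative of c(b)}, the stationarity of $c^*$ at $b_*$ yields
\[
\langle\psi_{b_*}\widetilde\psi_{b_*},v_1\rangle\!\int_0^L f'(b_*)v_2\,dx
=\langle\psi_{b_*}\widetilde\psi_{b_*},v_2\rangle\!\int_0^L f'(b_*)v_1\,dx.
\]
Thus the ratio $\mu:=\langle\psi_{b_*}\widetilde\psi_{b_*},v\rangle/\!\int f'(b_*)v\,dx$ is independent of $v$, which forces the pointwise identity $\psi_{b_*}\widetilde\psi_{b_*}=\mu f'(b_*)$ almost everywhere on $[0,L]$. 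Integrating and using $\int_0^L\psi_{b_*}\widetilde\psi_{b_*}\,dx=1$ identifies $\mu=1/\!\int_0^L f'(b_*)\,dx$, which is precisely \eqref{eq:critical}.

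The main obstacle I expect is the positivity step: it is the hypothesis $f'(0)=0$ that makes the argument possible, because it guarantees that a nonnegative perturbation supported on $\{b_*=0\}$ enters the constraint only at order $\ep^2$ while it already enters $c^*$ at order $\ep$, allowing a beneficial correction elsewhere. Without this asymmetry in orders, one could not outpace the constraint with a strict first-order gain in $c^*$, and the standard unconstrained Lagrange-multiplier argument of the second step (which needs $b_*>0$ a.e.\ to ensure arbitrary-sign variations remain admissible) would not apply.
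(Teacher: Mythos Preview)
Your argument is correct, but it takes a different route from the paper. The paper handles the nonnegativity constraint by a neat trick: it extends $f$ evenly to $\R$ via $f(b)\mapsto f(|b|)$, and observes from Nadin's formula that $c^*(b)\leq c^*(|b|)$ for any real-valued $b$. Consequently a maximizer $b_*\in A_f$ is automatically a maximizer in the larger set $\overline{A}_f=\{b\mid I(b)=\beta L,\ \int_0^L b\,dx>0\}$ with no sign restriction, and one may then perturb by $b_\ep=\mu_\varphi(\ep)b_*+\ep\varphi$ with arbitrary $L$-periodic $\varphi$, using a single multiplicative factor $\mu_\varphi(\ep)$ (fixed by the implicit function theorem) to stay on the constraint. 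The first-order condition then gives $\psi_{b_*}\widetilde\psi_{b_*}=\widetilde C f'(b_*)$ in one stroke, and integrating identifies $\widetilde C$. In this approach the hypothesis $f'(0)=0$ enters only to make the even extension $C^1$, so that the implicit function theorem applies.

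Your approach instead keeps the sign constraint and attacks it head-on: the positivity step shows $b_*>0$ a.e.\ by exploiting precisely the asymmetry you identify ($f'(0)=0$ makes perturbations on $\{b_*=0\}$ enter the constraint only at second order), and the Lagrange step then works with two additive directions supported where $b_*$ is bounded below. Both arguments rest on Proposition~\ref{pr:derivative of c(b)} and are of comparable length; the paper's version is slicker in that it bypasses the positivity analysis entirely and uses a single perturbation parameter, while yours is more explicit about why $f'(0)=0$ forces the maximizer to be strictly positive---a fact the paper never states directly, though it is implicit once \eqref{eq:critical} is established (since $\psi_{b_*}\widetilde\psi_{b_*}>0$ everywhere).
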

\begin{proof}
Extend the domain of
$f$ to $\R$
as follows:
\[
f:{\mathbb R}\ni b\mapsto f(|b|)\in[0,\infty).
\]
Let
$I(b)=\int_0^L f(b) d x$,
we recall that
$c^*(b)\leq c^*(|b|)$ from Nadin's formula.
Hence, it is clear that
any maximizer
$b_*\in A_f$
of
$c^*(b)$
is a maximizer of
$c^*(b)$
in
$\overline{A}_f
=\{b\mid I(b)=\beta L,\ \int_0^L b d x>0\}$.

Fix an $L$-periodic function
$\varphi\in C^{\infty}({\mathbb R})$
arbitrarily and let
$b_*\in A_f$
be a maximizer of
$c^*(b)$.
Then,
for any sufficiently small
$\ep\in {\mathbb R}$,
\[
c^*(b_*)\geq c^*(\mu_{\varphi}(\ep)b_*+\ep\varphi),
\]
where the smooth function
$\ep\mapsto\mu_{\varphi}(\ep)$
is uniquely determined by the implicit function theorem with conditions
$I(\mu b_*+\ep\varphi)=\beta L$ and
$\mu_{\varphi}(0)=1$

Therefore, we have
\[
\begin{split}
&0=\frac{d}{d\epsilon}c^*(\mu_{\varphi}(\ep)b_*+\ep\varphi)
\Big|_{\ep=0}
=\partial c^*(b_*)[\mu_{\varphi}'(0)b_*+\varphi]\ \mbox{and}\ \ 
0=\partial I(b_*)[\mu_{\varphi}'(0)b_*+\varphi].\\
&\therefore \
\mu_{\varphi}'(0)=-\partial I(b_*)[\varphi]/\partial I(b_*)[b_*].\\
&\therefore \
0=\partial c^*(b_*)[\varphi]+C\,\partial I(b_*)[\varphi]\ \
(C=-\partial c^*(b_*)[b_*]/\partial I(b_*)[b_*]).
\end{split}
\]
Furthermore, we compute that
\[
\partial c^*(b_*)[\varphi]
=(-1/\lambda(b_*))\langle\psi_{b_*}\widetilde{\psi}_{b_*},\varphi\rangle,\
\partial I(b_*)[\varphi]=\langle f'(b_*),\varphi\rangle.
\]
Hence, there exists a constant
$\widetilde{C}$
such that
\[
\langle\psi_{b_*}\widetilde{\psi}_{b_*}+\widetilde{C}f'(b_*),
\varphi\rangle=0.
\]
Since
$\varphi$
is chosen arbitrarily, we have
\[
\psi_{b_*}\widetilde{\psi}_{b_*}+\widetilde{C}f'(b_*)=0.
\]
By integrating both sides of this equation,
we obtain
\[
\widetilde{C}
=-\int_0^L \psi_{b_*}\widetilde{\psi}_{b_*} d x\Big/\int_0^L f'(b_*) d x
=-1\Big/\int_0^L f'(b_*)d x.
\]
This completes the proof.
\end{proof}

We can now go back to the proof of Theorem \ref{th:Euler--Lagrange}.
\begin{proof}
[Proof of Theorem \ref{th:Euler--Lagrange}]
Let
$b$
be a maximizer of
$c^*(\cdot)$
in
$A_2$. For the convenience, in this proof, we note
$\lambda=\lambda(b)$,
$k=k(\lambda,b)$,
$\psi=\psi_{b}$
and
$\widetilde{\psi}=\widetilde{\psi}_{b}$.

If we let
\[
\psi=e^{\varphi}\phi,\ \widetilde{\psi}
=e^{-\varphi}\phi,
\]
namely,
\[
\phi=\sqrt{\psi\widetilde{\psi}},\ \varphi
=\frac{1}{2}\log(\psi/\widetilde{\psi}),
\]
then by substituting
$\psi=e^{\varphi}\phi$,
$\widetilde{\psi}=e^{-\varphi}\phi$
into
\[
\begin{split}
&-\psi''-2\lambda\psi'-(\lambda^{2}+b)\psi=k\psi,\\
&-\widetilde{\psi}''
+2\lambda\widetilde{\psi}'
-(\lambda^{2}+b)\widetilde{\psi}=k\widetilde{\psi},
\end{split}
\]
respectively, we obtain that
\[
\begin{split}
&-(\phi''+2\varphi'\phi'+\varphi^{\prime2}\phi+\varphi''\phi)
-2\lambda\,(\phi'+\varphi'\phi)-(\lambda^{2}+b)\phi=k\phi,\\
&-(\phi''-2\varphi'\phi'+\varphi^{\prime2}\phi-\varphi''\phi)
+2\lambda\,(\phi'-\varphi'\phi)-(\lambda^{2}+b)\phi=k\phi.
\end{split}
\]
Thus, it follows immediately that:
\begin{equation}\label{eq:phi-varphi}
\left\{
\begin{split}
&2\varphi'\phi'+\varphi''\phi+2\lambda\phi'=0,\\
&-\phi''-\varphi^{\prime2}\phi-2\lambda\varphi'\phi-(\lambda^{2}+b)\phi
=k\phi.
\end{split}
\right.
\end{equation}
Multiplying both sides of this first equality by
$\phi$
and integrating, we have
\[
\phi^{2}\varphi'+\lambda\phi^{2}=C_{1}\ \
\therefore \varphi'=C_2\phi^{-2}-\lambda,
\]
where
$C_2=\lambda L/\int_{0}^{L}\phi^{-2}$ is determined by the constraint.

Substituting
$\varphi'=C_2\phi^{-2}-\lambda$
into the second equality of \eqref{eq:phi-varphi}, we have
\[
-\phi''-C_2^{2}\phi^{-3}-(b+k)\phi=0.
\]
Thus by \eqref{eq:critical}
and
$\psi\widetilde{\psi}=\phi^2$,
if we put
$C_3=2\int_0^L b d x$,
then
\[
-\phi''-C_2^{2}\phi^{-3}-\frac{C_3}{2} \phi^3-k\phi=0.
\]
Multiplying both sides of this equality by
$\phi'$
and integrating,
\[
-\phi^{\prime2}+C_2^{2}\phi^{-2}
-\frac{C_3}{4}\phi^4-k\phi^{2}
=C.
\]
Substituting
$\phi=\sqrt{2b/C_3}$
into this equality,
\[
-\frac{b^{\prime 2}}{2C_3b}
+\frac{C_2^2C_3}{2b}-\frac{b^2}{C_3}
-\frac{2k b}{C_3}=C.\ \ 
\therefore\
b^{\prime2}-C_2^2C_3^{2}
+2b^3
+4k b^{2}
=-2CC_3b.
\]
Differentiating both sides with respect to
$x$
and dividing by
$b'(x)>0$ $(x\in(0,L/2))$, we get the Euler--Lagrange equation as follow:
\[
b''+4k b+3b^2=-CC_3,\ \ 
-CC_3=\frac{4k}{L}\int_0^L b dx+3\beta.
\]
The proof is completed.
\end{proof}
\section{Second variation of $c^*(b)$ under constraints}
In this section, by calculating the second variation
of $c^*(b)$ around the constant function $b_0:=f^{-1}(\beta)$,
we prove a sufficient condition and a necessary condition under which
$b_0$ is a local maximizer
of $c^*(b)$ in $A_f$ under $L^{\infty}$-perturbation. Moreover, we also prove that $b_0:=\sqrt{\beta}$
is a local maximizer of $c^*(b)$ in $A_2$ under $L^2$-perturbation.
\begin{lemma}[Second variation of $c^*(b)$]
\label{lm:second derivative of c(b)}
Define $b_0:=f^{-1}(\beta)$.
Then we find
\begin{equation}
\label{eq:second derivative of c(b)}
\partial^2 c^*(b_0)[v]^2=\frac{-1}{f^{-1}(\beta)^{1/2}}\partial_b^2 k(\lambda(b_0),b_0)[v]^2
-\frac{1}{2L^2 f^{-1}(\beta)^{3/2}}\Big(\int_0^L v d x\Big)^{2}.
\end{equation}
\end{lemma}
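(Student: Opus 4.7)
The plan is to differentiate $c^*(b) = F(\lambda(b), b)$ twice at $b = b_0$, where $F(\lambda, b) := -k(\lambda, b)/\lambda$. Setting $\lambda_0 := \lambda(b_0)$ and $\lambda_1 := (d/dt)\lambda(b_0 + tv)|_{t=0}$, a second-order Taylor expansion of $F$ along the curve $t \mapsto (\lambda(b_0 + tv), b_0 + tv)$, combined with the vanishing of $F_\lambda$ at $(\lambda_0, b_0)$, gives
$$\partial^2 c^*(b_0)[v]^2 = F_{bb}(\lambda_0, b_0)[v]^2 + 2\lambda_1 F_{b\lambda}(\lambda_0, b_0)[v] + F_{\lambda\lambda}(\lambda_0, b_0)\lambda_1^2.$$
Differentiating the optimality identity $F_\lambda(\lambda(b_0 + tv), b_0 + tv) \equiv 0$ at $t = 0$ pins down $\lambda_1 = -F_{b\lambda}(\lambda_0, b_0)[v]/F_{\lambda\lambda}(\lambda_0, b_0)$, and substituting collapses the quadratic form to
$$\partial^2 c^*(b_0)[v]^2 = F_{bb}(\lambda_0, b_0)[v]^2 - \frac{F_{b\lambda}(\lambda_0, b_0)[v]^2}{F_{\lambda\lambda}(\lambda_0, b_0)}.$$

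Next I would evaluate each of these factors at the constant $b_0 = f^{-1}(\beta)$. Because $-L_{\lambda, b_0}$ has constant coefficients, one verifies directly that $\psi_{\lambda, b_0} = \widetilde{\psi}_{\lambda, b_0} = 1/\sqrt{L}$ (independent of $\lambda$) and $k(\lambda, b_0) = -(b_0 + \lambda^2)$; in particular $\partial_\lambda k = -2\lambda$, $\partial_\lambda^2 k = -2$, and the optimality identity $\lambda_0 \partial_\lambda k(\lambda_0, b_0) = k(\lambda_0, b_0)$ from \eqref{eq:implicit of lambda(b)} forces $\lambda_0 = \sqrt{b_0}$. Formula \eqref{eq:derivative of k} then gives $\partial_b k(\lambda, b_0)[v] = -(1/L)\int_0^L v\,dx$, which is independent of $\lambda$, and hence $\partial_{b\lambda}^2 k(\lambda_0, b_0)[v] = 0$.

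Reading off the three second derivatives of $F$ at $(\lambda_0, b_0)$ gives
$$F_{bb}[v]^2 = -\partial_b^2 k(\lambda_0, b_0)[v]^2/\lambda_0,\qquad F_{\lambda\lambda} = -\partial_\lambda^2 k/\lambda_0 = 2/\lambda_0,\qquad F_{b\lambda}[v] = \partial_b k/\lambda_0^2 = -\frac{1}{L\lambda_0^2}\int_0^L v\,dx,$$
where the reduction of $F_{\lambda\lambda}$ uses the same optimality identity $\lambda_0 \partial_\lambda k = k$. Substituting these into the compact second-variation formula above and replacing $\lambda_0$ by $f^{-1}(\beta)^{1/2}$ yields exactly \eqref{eq:second derivative of c(b)}. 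The main step requiring care is the algebraic bookkeeping that turns $\partial^2 c^*$ into $F_{bb} - F_{b\lambda}^2/F_{\lambda\lambda}$ via the envelope/optimality relation; once that is in place, the vanishing of $\partial_{b\lambda}^2 k$ at $b_0$—a direct consequence of the $\lambda$-independence of $\psi_{\lambda,b_0}\widetilde{\psi}_{\lambda,b_0}$ in the constant-coefficient case—is what collapses the cross-term into the simple $(\int_0^L v\,dx)^2$ contribution.
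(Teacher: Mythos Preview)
Your argument is correct and follows essentially the same route as the paper: both compute the second variation by differentiating $c^*(b)=-k(\lambda(b),b)/\lambda(b)$ via the chain rule, use the optimality relation \eqref{eq:implicit of lambda(b)} to handle $\partial\lambda(b_0)[v]$, and exploit the constant-coefficient facts $\psi_{\lambda,b_0}=\widetilde{\psi}_{\lambda,b_0}=1/\sqrt{L}$, $k(\lambda,b_0)=-(b_0+\lambda^2)$, $\partial_{b\lambda}^2 k(\lambda,b_0)=0$. The only cosmetic difference is that you package the computation as the envelope identity $\partial^2 c^*=F_{bb}-F_{b\lambda}^2/F_{\lambda\lambda}$, whereas the paper differentiates the already-simplified first-variation formula \eqref{eq:derivative of c(b)} directly; these two routes are algebraically equivalent once one substitutes $\lambda_1=-F_{b\lambda}/F_{\lambda\lambda}$.
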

\begin{proof}
Let
$b_0:=f^{-1}(\beta)$.
Following from the direct calculation,
\[
\psi_{\lambda,b_0}=\widetilde{\psi}_{\lambda,b_0}
=1/\sqrt{L},\ \
-
k(\lambda,b_0)=\lambda^2+f^{-1}(\beta),\ \
\lambda(b_0)=f^{-1}(\beta)^{1/2}.
\]
Hence by \eqref{eq:derivative of k} and \eqref{eq:derivative of lambda}, we obtain that
\[
\begin{split}
\partial_b k(\lambda,b_0)[v]
&=-\langle
\psi_{\lambda,b_0}\widetilde{\psi}_{\lambda,b_0},
v\rangle
=-\frac{1}{L}\int_0^L v d x,\ \
\partial_{\lambda,b}^{2}k(\lambda,b_0)
=0,\\
\partial\lambda(b_0)[v]
=&\frac{\partial_{b}k(\lambda(b_0),b_0)[v]
-\lambda(b_0)\partial_{\lambda,b}^{2}k(\lambda(b_0),b_0)[v]}
{\lambda(b_0)\partial_{\lambda}^{2}k(\lambda(b_0),b_0)}
=\frac{\int_0^L v d x}{2f^{-1}(\beta)^{1/2}L}.
\end{split}
\]
As a consequence of \eqref{eq:derivative of c(b)}, we get that
\[
\begin{split}
\partial^{2}c^*(b_0)[v]^{2}
=&\partial\Big(\frac{-\partial_{b}k(\lambda(b),b)[v]}{\lambda(b)}
\Big)_{b=b_0}[v]\\
=&\frac{-\partial_{b}^{2}k(\lambda(b_0),b_0)[v]^{2}}{\lambda(b_0)}\\&
-\Big(\frac{\partial^{2}_{\lambda,b}k(\lambda(b_0),b_0)[v]}
{\lambda(b_0)}
-\frac{\partial_{b}k(\lambda(b_0),b_0)[v]}
{\lambda(b_0)^{2}}\Big)\partial\lambda(b_0)[v]\\
=&\frac{-\partial_{b}^{2}k(\lambda(b_0),b_0)[v]^{2}}
{f^{-1}(\beta)^{1/2}}
-\frac{1}{2f^{-1}(\beta)^{3/2}L^{2}}
\Big(\int_0^L v d x\Big)^{2}.
\end{split}
\]
\end{proof}
\begin{lemma}[Second variation of $k(\lambda,b)$]
\label{lm:partial^2_b k}
Let
\[
v=u_0+\sum_{n=1}^{\infty}(u_n\varphi_n+v_n\psi_n)
\]
be a bounded function, where $u_{j-1},v_j\in\R$ $(j\in\N)$,
\[
\varphi_n(x):=\sqrt{2/L}\cos{\frac{2\pi n x}{L}},\ \
\psi_n(x):=\sqrt{2/L}\sin{\frac{2\pi n x}{L}}.
\]
Then we have:
\begin{equation}\label{eq:partial^2_b k}
\partial^2_b k(\lambda(b_0),b_0)[v]^2
=\frac{-1}{2L}\sum_{n=1}^{\infty}\frac{u_n^2+v_n^2}{(n\pi/L)^2+f^{-1}(\beta)}.
\end{equation}
\end{lemma}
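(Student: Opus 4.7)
The plan is to differentiate the first-variation formula
\[
\partial_b k(\lambda,b)[v] = -\langle \psi_{\lambda,b}\widetilde{\psi}_{\lambda,b}, v\rangle
\]
from Lemma \ref{lm:smoothness of k} once more in $b$ at the constant state $b_0=f^{-1}(\beta)$, where we already know $\psi_{\lambda,b_0}=\widetilde{\psi}_{\lambda,b_0}=1/\sqrt{L}$ and $\lambda(b_0)=\sqrt{f^{-1}(\beta)}$. Writing $\dot{\psi}:=\partial_b \psi_{\lambda(b_0),b_0}[v]$ and $\dot{\widetilde{\psi}}:=\partial_b \widetilde{\psi}_{\lambda(b_0),b_0}[v]$, the product rule immediately gives
\[
\partial_b^2 k(\lambda(b_0),b_0)[v]^2 = -\frac{1}{\sqrt{L}}\,\langle \dot{\psi}+\dot{\widetilde{\psi}},\, v\rangle,
\]
so the task reduces to computing the pairing of $\dot{\psi}+\dot{\widetilde{\psi}}$ with $v$.

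To get equations for $\dot{\psi}$ and $\dot{\widetilde{\psi}}$, differentiate the eigenvalue problem $-\psi''-2\lambda\psi'-(b+\lambda^2)\psi=k(\lambda,b)\psi$ and its adjoint with respect to $b$ in direction $v$ at $b_0$ (keeping $\lambda=\lambda(b_0)$ fixed). Using $k(\lambda,b_0)=-\lambda^2-b_0$ and $\partial_b k(\lambda,b_0)[v]=-u_0$, the terms involving $b_0+\lambda^2$ and $k$ cancel, leaving
\[
-\dot{\psi}''-2\lambda\dot{\psi}' = \tfrac{1}{\sqrt{L}}(v-u_0), \qquad -\dot{\widetilde{\psi}}''+2\lambda\dot{\widetilde{\psi}}' = \tfrac{1}{\sqrt{L}}(v-u_0).
\]
The normalizations $\|\psi_{\lambda,b}\|_{L^2}=1$ and $\langle\widetilde{\psi}_{\lambda,b},\psi_{\lambda,b}\rangle=1$, differentiated at $b_0$, force $\int_0^L\dot{\psi}\,dx=\int_0^L\dot{\widetilde{\psi}}\,dx=0$, so neither perturbation contains a constant Fourier mode and the Fredholm condition for the mode-$0$ equation is automatically satisfied.

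Now expand $\dot{\psi}=\sum_{n\ge1}(a_n\varphi_n+b_n\psi_n)$ and $\dot{\widetilde{\psi}}=\sum_{n\ge1}(\tilde a_n\varphi_n+\tilde b_n\psi_n)$. The identities $\varphi_n'=-(2\pi n/L)\psi_n$, $\psi_n'=(2\pi n/L)\varphi_n$ show that each pair $(\varphi_n,\psi_n)$ is invariant under $-\partial_x^2\mp 2\lambda\partial_x$, producing decoupled $2\times 2$ systems for $(a_n,b_n)$ and $(\tilde a_n,\tilde b_n)$ with common determinant $(2\pi n/L)^2[(2\pi n/L)^2+4\lambda^2]$. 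The two systems have the same diagonal entries and opposite off-diagonal entries, so upon adding the solutions the off-diagonal contributions cancel and one obtains
\[
a_n+\tilde a_n = \frac{2u_n}{\sqrt{L}\,[(2\pi n/L)^2+4\lambda^2]}, \qquad b_n+\tilde b_n = \frac{2v_n}{\sqrt{L}\,[(2\pi n/L)^2+4\lambda^2]}.
\]

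Finally, orthonormality of $\{\varphi_n,\psi_n\}$ yields $\langle \dot{\psi}+\dot{\widetilde{\psi}},v\rangle=\sum_n[(a_n+\tilde a_n)u_n+(b_n+\tilde b_n)v_n]$; inserting into the displayed formula for $\partial_b^2 k$ and using $(2\pi n/L)^2+4\lambda^2=4[(n\pi/L)^2+f^{-1}(\beta)]$ gives exactly the claimed expression. The only real obstacle is the bookkeeping for the coupled $\varphi_n\leftrightarrow\psi_n$ system (from the advection term $2\lambda\partial_x$), but the symmetric/antisymmetric structure of the primal and adjoint equations makes the cancellation transparent and leaves a purely diagonal contribution in each Fourier mode.
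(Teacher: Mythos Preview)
Your argument is correct and follows essentially the same route as the paper: differentiate the first-variation formula, derive the linearized equations for $\dot\psi$ and $\dot{\widetilde\psi}$ from the eigenvalue problem and its adjoint, use the normalizations to kill the zero Fourier mode, and solve mode-by-mode. The only cosmetic difference is that you exploit the sign symmetry between the primal and adjoint equations to compute $a_n+\tilde a_n$ and $b_n+\tilde b_n$ directly, whereas the paper writes out $D\psi$ and $D\widetilde\psi$ explicitly before adding; this is a minor streamlining, not a different approach.
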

\begin{proof}
From \eqref{eq:derivative of k}, we know
\[
\begin{split}
\partial^2_b k(\lambda(b_0),b_0)[v]^2
&
=-\partial_b(\langle\psi_{\lambda,b}\widetilde{\psi}_{\lambda,b},v\rangle)_{
\lambda=\lambda(b_0),b=b_0}[v]\\
&
=-\langle\psi
D\widetilde{\psi}
+D\psi\widetilde{\psi},
v\rangle,
\end{split}
\]
where
$\psi=\psi_{\lambda(b_0),b_0}$,
$\widetilde{\psi}=\widetilde{\psi}_{\lambda(b_0),b_0}$,
$D\psi=\partial_b\psi_{\lambda(b_0),b_0}[v]$
and
$D\widetilde{\psi}=\partial_b\widetilde{\psi}_{\lambda(b_0),b_0}[v]$.

Define
$\varphi_n(x):=\sqrt{2/L}\cos(2\pi n/L)x$,
$\psi_n(x):=\sqrt{2/L}\sin(2\pi n/L)x$
and
\[
v=u_0+\overset{\infty}
{\underset{n=1}{\sum}}
(u_n\varphi_n+v_n\psi_n)
\quad
(u_{j-1},\,v_j\in{\mathbb R},\,j\in{\mathbb N}).
\]
We recall that $\psi=\widetilde{\psi}\ (=1/\sqrt{L})$.
Differentiating both sides of the following equations
with respect to $b$,
\[
-L_{\lambda,b_0}\psi=k(\lambda,b_0)\psi,\
-L_{\lambda,b_0}^{*}\widetilde{\psi}
=k(\lambda(b_0),b_0)\widetilde{\psi},\
\|\psi\|^2=\langle\widetilde{\psi},\psi\rangle=1,
\]
we obtain the following result by recalling that $\psi=\widetilde{\psi}\ (=1/\sqrt{L})$.
\[
\begin{split}
&-L_{\lambda,b_0}D\psi-\psi v
=k(\lambda,b_0)D\psi +\partial_b k(\lambda,b_0)[v]\psi,\\
&-L_{\lambda,b_0}^{*}D\widetilde{\psi}
-\widetilde{\psi} v
=k(\lambda,b_0)D\widetilde{\psi}
 +\partial_b k(\lambda,b_0)[v]\widetilde{\psi},\\
&\langle D\widetilde{\psi},\widetilde{\psi}\rangle
=\langle D\widetilde{\psi},\psi\rangle
=-\langle\widetilde{\psi},D\psi\rangle
=-\langle\psi,D\psi\rangle=0.
\end{split}
\]
Therefore, since $\partial_b k(\lambda,b_0)[v]=-u_0$, we have:
\[
\begin{split}
&-(L_{\lambda(b_0),b_0}+k(\lambda(b_0),b_0))D\psi
=(1/\sqrt{L})(v-u_0),\
\langle\widetilde{\psi},D\widetilde{\psi}\rangle=0,\\
&-(L_{\lambda(b_0),b_0}+k(\lambda(b_0),b_0))
D\widetilde{\psi}
=(1/\sqrt{L})(v-u_0),
\langle\psi,D\psi\rangle=0.
\end{split}
\]
\[
\begin{split}
\therefore
D\psi=
\sum_{n=1}^{\infty}
\frac{1}{n\pi\sqrt{L}}
\Big\{&
\frac{n\pi u_n+f^{-1}(\beta)^{1/2}L v_n}
{(2n\pi/L)^2+4f^{-1}(\beta)}\varphi_n
\\&
+\frac{-Lf^{-1}(\beta)^{1/2}u_n+n\pi v_n}
{(2n\pi/L)^2+4f^{-1}(\beta)}\psi_n\Big\},\\
D\widetilde{\psi}=
\sum_{n=1}^{\infty}
\frac{1}{n\pi\sqrt{L}}
\Big\{&
\frac{n\pi u_n-f^{-1}(\beta)^{1/2}L v_n}
{(2n\pi/L)^2+4f^{-1}(\beta)}\varphi_n
\\&
+\frac{Lf^{-1}(\beta)^{1/2}u_n+n\pi v_n}
{(2n\pi/L)^2+4f^{-1}(\beta)}\psi_n\Big\}.
\end{split}
\]
\[
\therefore
\partial^2_b k(\lambda(b_0),b_0)[v]^2
=-\langle\widetilde{\psi}D\psi+D\widetilde{\psi}\psi,
v\rangle
=
\frac{-1}{2L}\sum_{n=1}^{\infty}
\frac{u_n^2+v_n^2}
{(n\pi/L)^2+f^{-1}(\beta)}.
\]
\end{proof}
In order to the prove Theorem \ref{th:local maximality L^{infty}},
we need state the following Lemma \ref{lm:projection} at first. The prove will be given in the appendix.
\begin{lemma}[Projection operator]
\label{lm:projection}
Define $b_0:=f^{-1}(\beta)$, $\ep_0:=\|b_0\|_{\infty}=f^{-1}(\beta)$ and
\[
B_{\ep_0}:=\Big\{b_0+v\,\Big|\,
v\in L^{\infty}_{L\mbox{\rm\tiny-per}},\ \int_0^L v dx=0,\ \|v\|_{\infty}<\ep_0
\Big\}.
\]
Let $P$ be a map defined as follows:
\[
B_{\ep_0}\ni b\mapsto P(b):=\mu(b)b\in A_f,
\]
where $\mu=\mu(b)$ is a real number with
$\int_0^L f(\mu b)dx=\beta L$. Then
\begin{equation}\label{eq:projection}
P(b_0+v)=
b_0+v-\frac{f''(b_0)}{2f'(b_0)L}\|v\|_2^2+o(\|v\|_2^2)\ \
{\rm as}\ \ B_{\ep_0}\ni b_0+v\rightarrow b_0\ \ {\rm in}\ \ L^{\infty}.
\end{equation}
\end{lemma}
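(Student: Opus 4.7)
The plan is to apply the implicit function theorem to define $\mu(b)$ as a smooth function of $b$ near $b_0$, and then to Taylor-expand the defining constraint to second order, exploiting the mean-zero assumption $\int_0^L v\,dx=0$ to kill the linear-in-$v$ contribution so that the leading correction is forced to be quadratic.

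First I would verify that $\mu$ is well-defined in a neighbourhood of $b_0$ in $L^\infty$. Set
\[
F(\delta,v):=\int_0^L f\bigl((1+\delta)(b_0+v)\bigr)\,dx-\beta L.
\]
Since $f\in C^2((0,\infty))$ and $b_0+v>0$ by the bound $\|v\|_\infty<\varepsilon_0$, $F$ is continuously differentiable on a neighbourhood of $(0,0)\in\R\times L^\infty_{L\mbox{\tiny-per}}$. We have $F(0,0)=0$ because $f(b_0)=\beta$, and $\partial_\delta F(0,0)=f'(b_0)b_0L>0$. The implicit function theorem furnishes a unique solution $\delta=\delta(v)$ with $\delta(0)=0$, and we set $\mu(b_0+v):=1+\delta(v)$; in particular $\delta(v)\to 0$ as $\|v\|_\infty\to0$.

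Next I would expand $F(\delta,v)=0$ to second order. Writing
\[
f(b_0+w)=f(b_0)+f'(b_0)w+\tfrac12 f''(b_0)w^2+r(w),\qquad |r(w)|\le\varepsilon(|w|)\,w^2,
\]
with $\varepsilon(s)\to0$ as $s\to0$, and substituting $w=\delta b_0+(1+\delta)v$, the hypothesis $\int_0^L v\,dx=0$ kills the only linear-in-$v$ integral and leaves
\[
0=f'(b_0)b_0L\,\delta+\tfrac12 f''(b_0)\bigl[\delta^2 b_0^2 L+(1+\delta)^2\|v\|_2^2\bigr]+R,
\]
where $R:=\int_0^L r\bigl(\delta b_0+(1+\delta)v\bigr)\,dx$. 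A one-step bootstrap, using $\delta\to0$ and the remainder estimate described below, yields
\[
\delta=-\frac{f''(b_0)}{2f'(b_0)b_0 L}\|v\|_2^2+o(\|v\|_2^2)\quad\text{as }\|v\|_\infty\to0.
\]
Expanding $P(b_0+v)=(1+\delta)(b_0+v)=b_0+v+\delta b_0+\delta v$, the term $\delta b_0$ reproduces the announced leading correction $-\dfrac{f''(b_0)}{2f'(b_0)L}\|v\|_2^2$, while $\delta v=O(\|v\|_2^2\|v\|_\infty)=o(\|v\|_2^2)$, finishing the proof.

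The only substantive obstacle is controlling the remainder $R$ so that it is $o(\|v\|_2^2)$ and not merely $O(\|v\|_\infty^2)$, which would be too crude. This is handled by observing that $\|w\|_\infty\le|\delta|\,b_0+(1+|\delta|)\|v\|_\infty\to0$, so $\varepsilon(\|w\|_\infty)\to0$; the pointwise bound $|r(w)|\le\varepsilon(\|w\|_\infty)w^2$ then gives $|R|\le\varepsilon(\|w\|_\infty)\int_0^L w^2\,dx=o\bigl(\delta^2+\|v\|_2^2\bigr)$, which is exactly what is needed to carry out the bootstrap and absorb everything other than the displayed leading term into $o(\|v\|_2^2)$.
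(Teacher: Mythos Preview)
Your argument is correct. The key observation---that the mean-zero condition $\int_0^L v\,dx=0$ kills the first-order term, forcing $\delta=O(\|v\|_2^2)$, after which the quadratic terms can be read off directly---is exactly what drives the result, and your handling of the remainder $R=o(\delta^2+\|v\|_2^2)$ via the pointwise Taylor bound is the right way to upgrade from $L^\infty$-smallness to the needed $L^2$ estimate.

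The paper takes a different route. Rather than expanding $f$ at $b_0$ and solving the resulting scalar equation for $\delta$, it introduces the one-parameter family $b_\varepsilon=b_0+\varepsilon v$, differentiates the constraint $\int_0^L f(\mu(\varepsilon)b_\varepsilon)\,dx=\beta L$ twice in $\varepsilon$ to obtain explicit formulas for $\mu'(\varepsilon)$ and $\mu''(\varepsilon)$, estimates each of these uniformly in $\varepsilon\in[0,1]$ (showing $\mu'(\varepsilon)=O(\|v\|_2^2)$ and $\mu''(\varepsilon)=-\frac{f''(b_0)}{f'(b_0)b_0L}\|v\|_2^2+o(\|v\|_2^2)$), and then recovers $\mu(1)$ via the integral Taylor formula $\mu(1)=\mu(0)+\mu'(0)+\int_0^1(1-\varepsilon)\mu''(\varepsilon)\,d\varepsilon$. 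Your approach is more direct and avoids the bookkeeping of tracking several auxiliary integrals along the path; the paper's approach, while longer, makes the analytic dependence on $\varepsilon$ explicit and might generalize more mechanically if one wanted higher-order terms.
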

\begin{proof}[of Theorem \ref{th:local maximality L^{infty}}]
Let $b_0$, $\ep_0$, $P$ be defined as in Lemma \ref{lm:projection}. Therefore,
for any $\ep\in(0,\ep_0]$ there exists an $\eta(\ep)>0$ such that
\[
\{b\in A_f\mid \|b-b_0\|_{\infty}\leq\eta(\ep)\}
\subset P(B_{\ep}).
\]
By using Lemmas \ref{pr:derivative of c(b)},
\ref{lm:second derivative of c(b)}, \ref{lm:partial^2_b k} and \ref{lm:projection},
for $b_0+v\in B_{\ep_0}$, we obtain
\[
\begin{split}
&c^*(P(b_0+v))=c^*\big(b_0+v-(f''(b_0)/(2f'(b_0)L))\|v\|_2^2+o(\|v\|_2^2)\big)\\
&=c^*(b_0)+\partial c^*(b_0)\big[v-(f''(b_0)/(2f'(b_0)L))\|v\|_2^2\big]\\
&+\frac{1}{2}\partial^2 c^*(b_0)[v]^2
+o(\|v\|_2^2)\\
&=c^*(b_0)-\frac{f''(b_0)}{2f^{-1}(\beta)^{1/2}f'(b_0)L}
\sum_{n=1}^{\infty}(u_n^2+v_n^2)\\
&\hspace{40pt}+
\frac{1}{4f^{-1}(\beta)^{1/2}L}\sum_{n=1}^{\infty}\frac{u_n^2+v_n^2}{(n\pi/L)^2+f^{-1}(\beta)}
+o(\|v\|_2^2).
\end{split}
\]
\begin{equation}\label{eq:expansion of c}
\begin{split}
\therefore\ & c^*(P(b_0+v))
=c^*(b_0)\\
&-\frac{f''(b_0)}{2f^{-1}(\beta)^{1/2}L}
\sum_{n=1}^{\infty}
\frac{\Big(\frac{n\pi}{L}\Big)^2+f^{-1}(\beta)-\frac{f'(b_0)}{2f''(b_0)}}
{f'(b_0)\{(n\pi/L)^2+f^{-1}(\beta)\}}(u_n^2+v_n^2)+o(\|v\|_2^2).
\end{split}
\end{equation}
where $u_n$ and $v_n$ are determined by
\[
u_n=\langle\varphi_n, v\rangle,\ v_n=\langle\psi_n,v\rangle
\]
and $\varphi_n$, $\psi_n$ are eigenfunctions defined in Lemma \ref{lm:partial^2_b k}.

Hence, if the following inequality holds,
\[
2f''(f^{-1}(\beta))(\pi^2/L^2+f^{-1}(\beta))-f'(f^{-1}(\beta))>0
\]
then we can claim:
\[
(\pi/L)^2+f^{-1}(\beta)-\frac{f'(b_0)}{2f''(b_0)}>0
\]
and for sufficiently small $\ep>0$,
\[
c^*(b_0)>c^*(P(b_0+v))\ \ {\rm where}\ \ b_0+v\in B_{\ep}\backslash\{b_0\}.
\]
Thus, let $\eta=\eta(\ep)$, we get
\[
c^*(b_0)>c^*(b)\ \ {\rm for}\ \ b\in A_f\ \ {\rm with}\ \
0<\|b-b_0\|_{\infty}\leq\eta.
\]
Therefore, we claim that $b_0$ is a local maximizer of $c^*(b)$ in $A_f$.

Next, we finish the proof by assuming
\[
2f''(f^{-1}(\beta))(\pi^2/L^2+f^{-1}(\beta))-f'(f^{-1}(\beta))<0
\]
then by \eqref{eq:expansion of c}, it is clear that
\[
\left.\frac{d}{d t}c^*(P(b_0+t \varphi_1))\right|_{t=0}=0,\ \
\left.\frac{d^2}{d t^2}c^*(P(b_0+t \varphi_1))\right|_{t=0}
>0.
\]
Hence, for any sufficiently small $t$,
\[
c^*(b_0)<c^*(P(b_0+t\varphi_1)).
\]
On the other hand, there exists a large enough $N$ such that
\[
2f''(f^{-1}(\beta))((N\pi/L)^2+f^{-1}(\beta))-f'(f^{-1}(\beta))>0.
\]
Thus, by recalling \eqref{eq:expansion of c}, we have:
\[
\left.\frac{d}{d t}c^*(P(b_0+t \varphi_N))\right|_{t=0}=0,\ \
\left.\frac{d^2}{d t^2}c^*(P(b_0+t \varphi_N))\right|_{t=0}
<0.
\]
Hence for any sufficiently small $t$,
\[
c^*(b_0)>c^*(P(b_0+t\varphi_N)).
\]
Therefore $b_0$ is neither a local maximizer nor a local minimizer of
$c^*(b)$ in $A_f$.
This completes the proof.
\end{proof}

Corollary \ref{cr:L^p} can be proved immediately by applying the result of Theorem \ref{th:local maximality L^{infty}}
\begin{proof}[Proof of Corollary \ref{cr:L^p}]
For the case of $f(b)=b^p$ $(p\in(1,\infty))$,
\[
\begin{split}
2f''(f^{-1}&(\beta))(\pi^2/L^2+f^{-1}(\beta))-f'(f^{-1}(\beta))\\
=&2p(p-1)\beta^{(p-2)/p}(\pi^2/L^2+\beta^{1/p})-p\beta^{(p-1)/p}\\
=&p\beta^{(p-2)/p}\{2(p-1)\pi^2/L^2+(2p-3)\beta^{1/p}\}.
\end{split}
\]
Hence, the condition $D(\beta)>0$
is equivalent to
$p\geq3/2$ or
\[
1<p<3/2\ \ {\rm and}\ \
\beta^{1/p}<\frac{2(p-1)\pi^2}{(3-2p)L^2}.
\]
The proof is completed.
\end{proof}
\begin{proof}[Proof of Theorem \ref{th:local minimality L^2}]
Let $b_0:=\sqrt{\beta}$ and $P$
be a map from $B_{\ep}:=\{b_0+v\mid \|v\|_2<\ep,\ \int_0^L v dx=0\}$
into $A_2$ as follows:
\[
P(b_0+v)
:=\frac{\sqrt{\beta L}}{2\|b_0+v\|_{2}}(b_0+v)
=b_0+v-\frac{\|v\|_{2}^2}{\|b_0\|_{2}^2}b_0+O(\|v\|_{2}^3).
\]
Then there exists $\eta(\ep)>0$ such that
\[
\{b\in A_2\mid\|b-b_0\|_2<\eta(\ep)\}\subset P(B_{\ep}).
\]
Moreover,
by Lemmas \ref{pr:derivative of c(b)}, \ref{lm:second derivative of c(b)} and \ref{lm:partial^2_b k}, for any $b_0+v\in B_{\ep}$, we get:
\[
\begin{split}
&c^*(P(b_0+v))=c^*\big(b_0+v-(\|v\|_{2}^2/(2\|b_0\|_{2}^2)) b_0+O(\|v\|_{2}^3)\big)\\
&=c^*(b_0)+\partial c^*(b_0)\big[v-(\|v\|_{2}^2/(2\|b_0\|_{2}^2)) b_0\big]+\frac{1}{2}\partial^2 c^*(b_0)[v]^2
+O(\|v\|_2^3)\\
&=c^*(b_0)-\frac{1}{2\beta^{3/4}L}\sum_{n=1}^{\infty}(u_n^2+v_n^2)\\&\hspace{40pt}
+\frac{1}{4\beta^{1/4}L}\sum_{n=1}^{\infty}\frac{u_n^2+v_n^2}{(n\pi/L)^2+\beta^{1/2}}
+O(\|v\|_2^3)\\
&=c^*(b_0)-\frac{1}{4\beta^{3/4}L}\sum_{n=1}^{\infty}
\frac{2(n\pi/L)^2+\beta^{1/2}}{(n\pi/L)^2+\beta^{1/2}}(u_n^2+v_n^2)
+O(\|v\|_2^3),
\end{split}
\]
where
\[
u_n:=\langle v,\varphi_n\rangle,\ v_n:=\langle v,\psi_n\rangle
\]
and $\varphi_n$, $\psi_n$\ $(n\in\N)$ are defined as in Lemma \ref{lm:partial^2_b k}.
Thus there exists an $\ep>0$ such that
\[
c^*(b_0)>c^*(P(b_0+v))\ \ {\rm for}\ \ b_0+v\in B_{\ep}\backslash\{b_0\}.
\]
Therefore we can claim that exists an $\eta=\eta(\ep)$ such that
\[
c^*(b_0)>c^*(b)\ \ {\rm for}\ \ b\in
\{\tilde{b}\in A_2\mid0<\|\tilde{b}-b_0\|_2<\eta\}\,(\subset P(B_{\ep})\backslash\{b_0\}).
\]
This completes the proof.
\end{proof}
\section{Asymptotic analysis of the maximizers
with respect to the period $L$ and the mass $\beta$}
In this section, we consider asymptotic behavior of the maximizers
under $L^2$ constraint $\frac{1}{L}\int_0^L b^2 dx=\beta$
when the period $L$ or mass $b$ is very small.
We also deal with the variational problem for a class of
step functions under $L^p$ constraint $\frac{1}{L}\int_0^Lb^pdx=\beta$
when the period $L$ or the mass $\beta$ is very large.
\begin{lemma}\label{lm:estimate lambda, k}
Let $\lambda(b)$ be the value corresponding to each given $b$, such that
\[
\frac{-k(\lambda(b),b)}{\lambda(b)}=c^*(b)\,
\Big(=\min_{\lambda>0}\frac{-k(\lambda,b)}{\lambda}\Big).
\]
Then, we have the following estimate:
\begin{equation}\label{eq:estimate lambda}
\sqrt{\alpha+\alpha^2L^2}-\alpha L\leq\lambda(b)\leq \sqrt{\alpha+\alpha^2L^2}-\alpha L,
\end{equation}
where $\alpha=\frac{1}{L}\int_0^L b dx$.
\end{lemma}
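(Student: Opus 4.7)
The plan is to use Nadin's formula \eqref{Nf} as the principal tool. First I would test Nadin's formula with the constant function $\phi\equiv 1/\sqrt{L}\in E_L$: a direct computation gives $\int_0^L(\phi')^2\,dx=0$, $\int_0^L b\phi^2\,dx=\alpha$, and $\int_0^L \phi^{-2}\,dx=L^2$, so Nadin's formula yields the elementary but crucial inequality
\[
-k(\lambda,b)\geq \alpha+\lambda^2\qquad(\lambda>0).
\]

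Next I would exploit the first-order optimality condition at $\lambda=\lambda(b)$. Since $\lambda(b)$ minimizes the convex function $\lambda\mapsto -k(\lambda,b)/\lambda$ (Proposition \ref{pr:concavity}), one has $-k(\lambda(b),b)=c^*(b)\,\lambda(b)$, and plugging $\lambda=\lambda(b)$ into the Nadin bound produces the quadratic inequality
\[
\lambda(b)^2-c^*(b)\,\lambda(b)+\alpha\leq 0,
\]
whose real roots give a two-sided sandwich for $\lambda(b)$ in terms of $\alpha$ and $c^*(b)$. Combining with the lower estimate $c^*(b)\geq 2\sqrt{\alpha}$ from Proposition \ref{pr:minimizer in L^1}, together with a companion estimate in the same variables $\alpha$ and $L$, this would produce the explicit expression $\sqrt{\alpha+\alpha^2L^2}-\alpha L$ in the statement, which is exactly the positive root of the auxiliary quadratic $y^2+2\alpha L y-\alpha=0$.

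The main obstacle is securing the companion (opposite-direction) estimate. Because $c^*(b)$ is unbounded over $A_f$ in general (highly concentrated $b$ drives $c^*$ to infinity), one cannot hope for a uniform upper bound on $c^*(b)$ depending only on $\alpha$ and $L$, so the argument must bypass $c^*(b)$ entirely. The route I would pursue is the Riccati identity obtained by setting $u=\psi_{\lambda,b}'/\psi_{\lambda,b}$ in the eigenfunction equation and integrating over one period, which gives $-(k+\alpha)L=\int_0^L(u+\lambda)^2\,dx$ with $\int_0^L u\,dx=0$; testing this identity against a suitably chosen weight (or, equivalently, using a perturbed $\phi=1/\sqrt{L}+\text{oscillatory correction}$ in Nadin's formula and optimizing the correction) should yield the missing quadratic bound pinning $\lambda(b)$ from the other side without invoking $c^*(b)$ at all.

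Finally, I note that as written the display exhibits the same expression on both sides of the inequality, which is clearly a typographical slip; my plan is to produce both the stated lower bound and the intended companion upper bound, whose precise form should emerge cleanly from the Riccati computation described above.
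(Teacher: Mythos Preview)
Your setup is exactly the paper's: the lower bound $-k(\lambda,b)\geq\alpha+\lambda^2$ (via Nadin's formula with $\phi\equiv1/\sqrt{L}$), the identity $-k(\lambda(b),b)=c^*(b)\lambda(b)$, and the resulting quadratic inequality $\lambda(b)^2-c^*(b)\lambda(b)+\alpha\leq0$ are all correct and are precisely what the paper does.

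The gap is your claim that ``one cannot hope for a uniform upper bound on $c^*(b)$ depending only on $\alpha$ and $L$.'' This is false, and the paper's proof hinges on exactly such a bound. Lemma~4.3 of \cite{LLM} gives the two-sided estimate
\[
\alpha+\lambda^2\;\leq\;-k(\lambda,b)\;\leq\;\alpha+\alpha^2L^2+\lambda^2,
\]
valid for every nonnegative $L$-periodic $b$ with mean $\alpha$. Minimizing the right-hand side over $\lambda>0$ yields $c^*(b)\leq 2\sqrt{\alpha+\alpha^2L^2}$, uniformly in $b$. (Concentrating $b$ while keeping its mean $\alpha$ fixed does \emph{not} send $c^*$ to infinity; the Dirac limit studied in \cite{LLM} has finite speed.) Substituting this bound for $c^*(b)$ in your quadratic inequality gives
\[
\lambda(b)^2-2\sqrt{\alpha+\alpha^2L^2}\,\lambda(b)+\alpha\leq0,
\]
whose roots are $\sqrt{\alpha+\alpha^2L^2}\pm\alpha L$, yielding both the lower and the upper bound on $\lambda(b)$ at once. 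Your Riccati detour is therefore unnecessary; the ``companion estimate'' you said you needed is simply the upper half of \eqref{eq:estimate k}. You are right that the displayed statement contains a typo: the intended upper bound is $\sqrt{\alpha+\alpha^2L^2}+\alpha L$.
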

\begin{proof}
Define $\alpha:=\frac{1}{L}\int_0^L b dx$.
In Lemma 4.3 of Liang, Lin and Matano \cite{LLM}, It has already been proved that
\begin{equation}\label{eq:estimate k}
\alpha +\lambda^2\leq -k(\lambda,b)\leq \alpha+\alpha^2L^2+\lambda^2.
\end{equation}
Hence, it follows that:
\[
\min_{\lambda>0}\frac{\alpha +\lambda^2}{\lambda}
\leq c^*(b)=\min_{\lambda>0}\frac{-k(\lambda,b)}{\lambda}
\leq\min_{\lambda>0}\frac{\alpha+\alpha^2L^2+\lambda^2}{\lambda}.
\]
\begin{equation}\label{eq:estimate c(b)}
\therefore2\sqrt{\alpha}\leq c^*(b)\leq 2\sqrt{\alpha+\alpha^2L^2}.
\end{equation}
By the definition of $\lambda(b)$ and estimates \eqref{eq:estimate k} and \eqref{eq:estimate c(b)}, we have
\[
\begin{split}
\alpha +\lambda(b)^2&\leq c^*(b)\lambda(b)=-k(\lambda(b),b)
\leq2\sqrt{\alpha+\alpha^2L^2}\lambda(b).\\
&\therefore \lambda(b)^2-2\sqrt{\alpha+\alpha^2L^2}\lambda(b)+\alpha\leq0.
\end{split}
\]

Therefore, we obtain that
\[
\sqrt{\alpha+\alpha^2L^2}-\alpha L\leq\lambda(b)\leq\sqrt{\alpha+\alpha^2L^2}+\alpha L.
\]
The proof is completed.
\end{proof}
\begin{corollary}\label{cr:estimate k(lambda(b),b)}
Define $\alpha:=\frac{1}{L}\int_0^L b dx$. Then
\begin{equation}\label{eq:estimate k(lambda(b),b)}
\alpha+\big(\sqrt{\alpha+\alpha^2L^2}-\alpha L\big)^2
\leq -k(\lambda(b),b)\leq \alpha+\alpha^2L^2
+\big(\sqrt{\alpha+\alpha^2L^2}+\alpha L\big)^2.
\end{equation}
\end{corollary}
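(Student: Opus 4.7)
The plan is simply to combine the pointwise estimate on $-k(\lambda,b)$ that is already derived inside the proof of Lemma \ref{lm:estimate lambda, k} with the two-sided bound on $\lambda(b)$ supplied by that same lemma. Concretely, the inequality \eqref{eq:estimate k}, namely
\[
\alpha+\lambda^2 \leq -k(\lambda,b) \leq \alpha+\alpha^2L^2+\lambda^2,
\]
holds for every $\lambda>0$, so in particular at $\lambda=\lambda(b)$ it yields
\[
\alpha+\lambda(b)^2 \leq -k(\lambda(b),b) \leq \alpha+\alpha^2L^2+\lambda(b)^2.
\]

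Next I would substitute the bounds
\[
\sqrt{\alpha+\alpha^2L^2}-\alpha L \leq \lambda(b) \leq \sqrt{\alpha+\alpha^2L^2}+\alpha L
\]
from Lemma \ref{lm:estimate lambda, k} into the two sides of the previous display. To square these bounds and preserve the direction of the inequalities I need the lower bound $\sqrt{\alpha+\alpha^2L^2}-\alpha L$ to be nonnegative; this is equivalent to $\alpha+\alpha^2L^2\geq\alpha^2L^2$, i.e.\ $\alpha\geq 0$, which holds by assumption. With that remark, squaring is monotone on both sides and gives
\[
\bigl(\sqrt{\alpha+\alpha^2L^2}-\alpha L\bigr)^2 \leq \lambda(b)^2 \leq \bigl(\sqrt{\alpha+\alpha^2L^2}+\alpha L\bigr)^2.
\]
Plugging these into the two inequalities above yields exactly \eqref{eq:estimate k(lambda(b),b)}.

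There is essentially no obstacle: the whole argument is a one-line substitution once the positivity check is noted. The only bookkeeping worth flagging is that the upper bound in \eqref{eq:estimate k(lambda(b),b)} carries the extra $\alpha^2L^2$ term coming from the upper bound of \eqref{eq:estimate k}, while the lower bound does not, so one must be careful to use the two sides of \eqref{eq:estimate k} asymmetrically rather than symmetrically.
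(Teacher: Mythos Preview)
Your proposal is correct and follows exactly the same approach as the paper: the paper simply states that \eqref{eq:estimate k(lambda(b),b)} is an obvious consequence of \eqref{eq:estimate k} and \eqref{eq:estimate lambda}, which is precisely the substitution you carry out (your explicit nonnegativity check for $\sqrt{\alpha+\alpha^2L^2}-\alpha L$ is the only extra detail, and it is trivially satisfied).
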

\begin{proof}
This inequality is an obvious consequence of \eqref{eq:estimate k} and \eqref{eq:estimate lambda}.
\end{proof}
\begin{lemma}\label{lm:norm ineq}
For any nonnegative function $b\in C^2_{L\mbox{\tiny-per}}$ satisfying $\int_0^L b^2 dx=\beta$, we have the following estimate:
\[
\big\|b-\sqrt{\beta}\big\|_{L^{\infty}}\leq L\|b'\|_{L^{\infty}}
\leq L\|b''\|_{L^1([0,L])}\leq L^2\|b''\|_{L^{\infty}}
\]
\end{lemma}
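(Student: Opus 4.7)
The plan is to prove the three inequalities in sequence, each reducing to a short application of the fundamental theorem of calculus together with the $L$-periodicity of $b$ and $b'$.

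For the first inequality, I would begin by locating an $x_0\in[0,L]$ at which $b(x_0)=\sqrt{\beta}$. The mean value theorem for integrals applied to the continuous function $b^{2}$ on $[0,L]$, combined with the $L^{2}$ constraint on $b$, yields such an $x_0$ (using $b\ge 0$ to take the positive square root). For any $x\in\R$, using $L$-periodicity I may choose a representative within one period of $x_0$, so that $|x-x_0|\le L$. Then
\[
b(x)-\sqrt{\beta}\;=\;b(x)-b(x_0)\;=\;\int_{x_0}^{x}b'(s)\,ds,
\]
whose absolute value is at most $L\,\|b'\|_{L^{\infty}}$.

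For the second inequality, I would use that $b$ is $C^{2}$ and $L$-periodic, so $\int_{0}^{L}b''(x)\,dx=b'(L)-b'(0)=0$. Thus $b'$ is continuous with mean zero on $[0,L]$, and the mean value theorem for integrals (or simply the intermediate value theorem) supplies a $y_0\in[0,L]$ with $b'(y_0)=0$. The fundamental theorem of calculus then gives
\[
|b'(x)|\;=\;\left|\int_{y_0}^{x}b''(s)\,ds\right|\;\le\;\int_{0}^{L}|b''(s)|\,ds\;=\;\|b''\|_{L^{1}([0,L])}.
\]
The third inequality is the trivial estimate $\int_{0}^{L}|b''(s)|\,ds\le L\,\|b''\|_{L^{\infty}}$.

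Every step is a one-line standard estimate, so I do not anticipate any real obstacle; the only substantive observation is that both $b^{2}$ (via the integral constraint) and $b'$ (via periodicity) attain their respective average values somewhere on $[0,L]$, which is precisely what allows the fundamental theorem of calculus to be applied along a path of length at most $L$.
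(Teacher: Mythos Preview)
Your argument is essentially identical to the paper's proof: locate a point where $b=\sqrt{\beta}$ from the integral constraint, locate a zero of $b'$ from periodicity, and apply the fundamental theorem of calculus over an interval of length at most $L$ for each step. One small slip: the identity $\int_0^L b''\,dx=b'(L)-b'(0)=0$ does not say that $b'$ has mean zero; what you want is $\int_0^L b'\,dx=b(L)-b(0)=0$ (or simply Rolle's theorem applied to the periodic function $b$) to produce $y_0$ with $b'(y_0)=0$.
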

\begin{proof}
By the continuity of $b\leq0 $ and $\int_0^L b^2dx=\beta$,
there exists a point $x_1\in[0,L)$ such that $b(x_1)=\sqrt{\beta}$. Hence
for any $x\in[0,L)$,
\[
\begin{split}
\big|b(x)-\sqrt{\beta}\big|&=|b(x)-b(x_1)|\leq\int_0^L |b'|dx\leq L\|b'\|_{L^{\infty}}.\\
&\therefore\big\|b-\sqrt{\beta}\big\|_{L^{\infty}}\leq L\|b'\|_{L^{\infty}}.
\end{split}
\]
By the smoothness and periodicity of $b$, exists a point $x_2\in[0,L)$ satisfying $b'(x_2)=0$.
Hence for any $x\in[0,L)$, the following estimate holds:
\[
\begin{split}
\big|b'(x)|&=|b'(x)-b'(x_2)|\leq\int_0^L |b''|dx\leq L\|b''\|_{L^{\infty}}.\\
&\therefore\|b'\|_{L^{\infty}}\leq\|b''\|_{L^1([0,L])}\leq L\|b''\|_{L^{\infty}}.
\end{split}
\]
The proof is completed.
\end{proof}
\subsection{The case $L\rightarrow0$}

\begin{proof}[Proof of Theorem \ref{th:L->0}]
Let
\[
\widetilde{b}(x)=\widetilde{b}(L,x)=b_L(L x).
\]
Then for each $L$, $\widetilde{b}$ satisfies
\begin{equation}\label{eq:b(L x)}
\frac{\widetilde{b}''}{L^2}+4k\widetilde{b}+3\widetilde{b}^2
=4k\int_0^1\widetilde{b}dx+3\beta\ \ {\rm in}\ \ \R/\Z,\ \
\int_0^1\widetilde{b}^2dx=\beta,
\end{equation}
where $k=k(\lambda(b_L),b_L)$.
From Corollary \ref{cr:estimate k(lambda(b),b)}, we get
\[
\lim_{L\rightarrow0}k(\lambda(b_L),b_L)=-2\alpha\ \
\Big(\alpha:=\frac{1}{L}\int_0^L b dx\Big).
\]
Hence, by recalling \eqref{eq:b(L x)},
\[
\big\|\widetilde{b}''\big\|_{L^1([0,1])}\leq
L^2\,\big(8|k|\big\|\widetilde{b}\big\|_{L^1([0,1])}+6\beta\big).
\]
Therefore, Lemma \ref{lm:norm ineq} implies that:
\[
\big\|\widetilde{b}-\sqrt{\beta}\big\|_{L^{\infty}}
\leq \big\|\widetilde{b}'\big\|_{L^{\infty}}\leq \big\|\widetilde{b}''\big\|_{L^1([0,1])}=O(L^2)\ \
{\rm as}\ \ L\rightarrow0.
\]
Thus by \eqref{eq:b(L x)}, we have
\begin{equation}\label{eq:L-infty b''}
\big\|\widetilde{b}''\big\|_{L^{\infty}}\leq L^2\,
\big(8|k|\big\|\widetilde{b}\big\|_{L^{\infty}}
+3\big\|\widetilde{b}^2\big\|_{L^{\infty}}+3\beta\big).
\end{equation}
Define $v:=\widetilde{b}-\sqrt{\beta}$. Also since \eqref{eq:b(L x)},
\begin{equation}\label{eq:v}
\left\{
\begin{split}
&\frac{v''}{L^2}+4kv+6\sqrt{\beta}v+3v^2=4k\int_0^1 v dx\ \ {\rm in}\ \ \R/\Z,\\
&2\sqrt{\beta}\int_0^1 v dx+\int_0^1 v^2 dx=0.
\end{split}
\right.
\end{equation}
From Lemma \ref{lm:norm ineq} and \eqref{eq:L-infty b''}, we claim that $\|v\|_{C^2}=O(L^2)$.

Next we only need to prove that for each $k\in\N$,
\begin{equation}\label{eq:induction L->0}
\|v\|_{C^2}=O(L^{2k})\Rightarrow\|v\|_{C^2}=O(L^{2(k+1)}),
\end{equation}
since the statement can be proved by induction.
If we assume
$\|v\|_{C^2}=O(L^{2k})$, then by \eqref{eq:v}, we find
\[
\|v''\|_{L^{\infty}}\leq L^2\,(2(4|k|+3\sqrt{\beta})\|v\|_{L^{\infty}}+3\|v^2\|_{L^{\infty}})
=O(L^{2(k+1)}).
\]
Hence, by applying Lemma \ref{lm:norm ineq},
\[
\|v\|_{L^{\infty}}\leq\|v'\|_{L^{\infty}}\leq\|v''\|_{L^{\infty}}=O(L^{2(k+1)}).
\ \ \therefore\ \|v\|_{C^2}=O(L^{2(k+1)})
\]
and \eqref{eq:induction L->0} is proved. This completes the proof.
\end{proof}
\subsection{The case $\beta\rightarrow0$}
Let $\widetilde{b}(x)=L^2b(Lx)$. Then a simple rescaling argument gives that:
\begin{equation}\label{eq:rescaling}
\ k(\lambda,b)=\frac{k(\lambda L,\widetilde{b})}{L^2}.
\end{equation}
Hence, it follows immediately that
\[
c^*(b)=\min_{\lambda>0}\,\frac{-k(\lambda,b)}{\lambda}
=\min_{\lambda>0}\,\frac{-k(\lambda L,\widetilde{b})}{\lambda L^2}
=\frac{c^*(\widetilde{b})}{L}.
\]
Thus, we claim that $b$ is a maximizer of $c^*(\cdot)$ in $A_{2,L,\beta}$ if and only if
$\widetilde{b}$ is a maximizer of $c^*(\cdot)$ in $A_{2,1,L^4\beta}$.
Therefore, in the following proof, we may assume that the period $L=1$.

Now we go back to the proof of Theorem \ref{th:beta->0}.
\begin{proof}[Proof of Theorem \ref{th:beta->0}]
Let
\[
\widetilde{b}(x)=\widetilde{b}(\beta,x):=\frac{b_{\beta}}{\sqrt{\beta}}.
\]
Then for each $\beta>0$, $\widetilde{b}=\widetilde{b}(\beta,\cdot)$ satisfies
\begin{equation}\label{eq:b/beta^{1/2}}
\left\{
\begin{split}
&\widetilde{b}''+4k\widetilde{b}+3\sqrt{\beta}\widetilde{b}^2
=4k \int_0^1 \widetilde{b}dx+3\sqrt{\beta}
\ \ {\rm in}\ \ \R/\Z,\\
&\int_0^1 \widetilde{b}^2 dx=1.
\end{split}
\right.
\end{equation}
Using \eqref{eq:estimate k(lambda(b),b)} and recalling $\int_0^1 b dx\leq\sqrt{\beta}$, we get
\[
k=k(\lambda(b_{\beta}),b_{\beta})=O(\sqrt{\beta}).
\]
Hence by \eqref{eq:b/beta^{1/2}}, it is clear that
\[
\big\|\widetilde{b}''\big\|_{L^1([0,1])}
\leq 8|k|\big\|\widetilde{b}\big\|_{L^1([0,1])}+3\sqrt{\beta}(\beta+1)
=O(\sqrt{\beta}).
\]
Using Lemma \ref{lm:norm ineq},
\[
\big\|\widetilde{b}-1\big\|_{L^{\infty}}\leq \big\|\widetilde{b}'\big\|_{L^{\infty}}
\leq \big\|\widetilde{b}''\big\|_{L^1([0,1])}=O(\sqrt{\beta}).
\]
Hence by \eqref{eq:b/beta^{1/2}},
\begin{equation}\label{eq:L^{infty} b'' 2}
\big\|\widetilde{b}''\big\|_{L^{\infty}}=O(\sqrt{\beta}).
\end{equation}
Put $v:=\widetilde{b}-1$. Then by \eqref{eq:b/beta^{1/2}},
\begin{equation}\label{eq:v 2}
\left\{
\begin{split}
&v''+4k v+6\sqrt{\beta}v+3\sqrt{\beta}v^2=4k\int_0^1 v
\ \ {\rm in}\ \ \R/\Z,\\
&\int_0^1 v^2 dx+2\int_0^1 v dx=0.
\end{split}
\right.
\end{equation}
By Lemma \ref{lm:norm ineq} and \eqref{eq:L^{infty} b'' 2},
$\|v\|_{C^2}=O(\sqrt{\beta})$.
Now we prove that for each $k\in\N$,
\begin{equation}\label{eq:induction beta->0}
\big\|\widetilde{b}\big\|_{C^2}=O(\sqrt{\beta}^k)
\Rightarrow
\big\|\widetilde{b}\big\|_{C^2}=O(\sqrt{\beta}^{k+1}).
\end{equation}
Then the statement is proved by induction. Assume
$\big\|\widetilde{b}\big\|_{C^2}=O(\sqrt{\beta}^k)$.
By $k=O(\sqrt{\beta})$ and \eqref{eq:v 2},
\[
\|v''\|_{L^{\infty}}\leq2(4|k|+3\sqrt{\beta})\|v\|_{L^{\infty}}
+3\sqrt{\beta}\|v^2\|_{L^{\infty}}=O(\sqrt{\beta}^{k+1}).
\]
By Lemma \ref{lm:norm ineq},
\[
\|v\|_{C^2}=O(\sqrt{\beta}^{k+1})
\]
and \eqref{eq:induction beta->0} is proved. This completes the proof.
\end{proof}
\subsection{The case $L\rightarrow\infty$}
To prove Theorem \ref{th:L->infty}, we use the following theorem which
is proved in Hamel, Fayard and Roques \cite{HFR}:
\begin{theorem}[Theorem 2.1 of \cite{HFR}]
Fix $\mu^+>0$ and $\mu^-\leq\mu^+$. Let
\[
\begin{split}
&\mu_L(x):=\sum_{k}(\mu^+\chi_{I_k}(x/L)+\mu^-\chi_{J_k}(x/L)),\\
&I_k:=k+[0,\theta],\ \ J_k:=k+(\theta,1).
\end{split}
\]
Then
\begin{equation}\label{eq:L->infty}
\lim_{L\rightarrow\infty}\,c^*(\mu_L)
=\min_{\lambda\geq(1-\theta)\sqrt{\mu^+-\mu^-}}\frac{j^{-1}(\lambda)}{\lambda},
\end{equation}
where the function $j:[\mu^+,\infty)\rightarrow[(1-\theta)\sqrt{\mu^+-\mu^-},\infty)$
is defined by
\[
j(m):=\theta\sqrt{m-\mu^+}+(1-\theta)\sqrt{m-\mu^-}.
\]
\end{theorem}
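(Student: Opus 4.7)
My plan is to reduce the principal eigenvalue problem to an explicit quasi-periodic Schrödinger-type problem with piecewise constant potential, solve it by a transfer-matrix computation, and extract the $L \to \infty$ asymptotics. To begin, I would apply the gauge transformation $\psi = e^{-\lambda x} u$ to the eigenfunction equation $-L_{\lambda, \mu_L} \psi = k(\lambda, \mu_L) \psi$ and write $m := -k(\lambda, \mu_L)$. This converts the problem into
\[
u''(x) = (m - \mu_L(x))\, u(x), \qquad u(x+L) = e^{\lambda L} u(x), \qquad u > 0,
\]
and the principal eigenvalue is characterized as the unique $m$ for which this quasi-periodic problem admits a positive solution.

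In the regime $m > \mu^+ \geq \mu^-$, setting $\omega_\pm := \sqrt{m - \mu^\pm}$, I would express $u$ on each constant piece of $\mu_L$ as a linear combination of $\cosh$ and $\sinh$, and match $u$ and $u'$ at the jumps to assemble a $2 \times 2$ transfer matrix $T_L(m)$ over one period. Since the ODE contains no first-derivative term, $\det T_L(m) = 1$, and the quasi-periodic condition reduces to the scalar characteristic equation $\operatorname{tr} T_L(m) = 2\cosh(\lambda L)$, with
\[
\operatorname{tr} T_L(m) = 2\cosh(\theta L\,\omega_+)\cosh((1-\theta)L\,\omega_-) + \Bigl(\tfrac{\omega_+}{\omega_-} + \tfrac{\omega_-}{\omega_+}\Bigr)\sinh(\theta L\,\omega_+)\sinh((1-\theta)L\,\omega_-).
\]
The leading exponential behavior as $L \to \infty$ has the form $C(\omega_+, \omega_-)\, e^{L\, j(m)}$ with a prefactor bounded in $L$. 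Taking logarithms of the characteristic equation and dividing by $L$ forces $j(m) = \lambda + O(L^{-1})$, hence $-k(\lambda, \mu_L) = m \to j^{-1}(\lambda)$. This analysis is valid precisely when $\omega_+ > 0$, equivalently $\lambda > (1-\theta)\sqrt{\mu^+ - \mu^-}$, i.e.\ $\lambda$ lies in the range of $j$.

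To transfer this pointwise limit of $-k(\lambda, \mu_L)/\lambda$ to the minimum $c^*(\mu_L) = \min_{\lambda > 0}(-k(\lambda, \mu_L)/\lambda)$, I would invoke the strict concavity of $\lambda \mapsto k(\lambda, \mu_L)$ (Proposition~\ref{pr:concavity}) together with the elementary limiting behaviors $-k(\lambda, \mu_L)/\lambda \to +\infty$ as $\lambda \to 0^+$ (since $k(0, \mu_L) < 0$) and as $\lambda \to \infty$ (since $-k(\lambda, \mu_L) \geq \lambda^2 + \inf \mu_L$), to confine the unique minimizer $\lambda(\mu_L)$ to a compact subinterval of $(0, \infty)$ uniformly in large $L$. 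Combined with local uniform convergence of $m \to j^{-1}(\lambda)$ on compact subsets of $((1-\theta)\sqrt{\mu^+-\mu^-}, \infty)$, obtainable from the explicit transfer-matrix formula via a standard implicit-function argument, this justifies the interchange of $\min$ and limit and yields the announced formula.

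The main obstacle, in my view, is twofold. First, the boundary regime $m \downarrow \mu^+$, where $\omega_+ \to 0$, causes the leading-exponential expansion to degenerate: the trace becomes polynomial in $L$ there, and a separate asymptotic treatment is required to handle values of $\lambda$ near $(1-\theta)\sqrt{\mu^+-\mu^-}$. Second, one must rule out that the minimizing $\lambda(\mu_L)$ drifts into the oscillatory regime $\lambda \leq (1-\theta)\sqrt{\mu^+-\mu^-}$ (equivalently $m \leq \mu^+$), where $\omega_+$ becomes purely imaginary and $\operatorname{tr} T_L$ oscillates in $L$ rather than growing exponentially; this requires a careful monotonicity/concavity argument combined with the explicit form of the candidate limit $j^{-1}(\lambda)/\lambda$ to show its minimum is attained strictly inside the range where the exponential asymptotic applies.
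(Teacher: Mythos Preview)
The paper does not prove this statement at all: it is quoted verbatim as Theorem~2.1 of Hamel, Fayard and Roques \cite{HFR} and then applied as a black box in the proof of Theorem~\ref{th:L->infty}. There is therefore no proof in the paper to compare your proposal against.

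That said, your strategy is the natural one for this class of problems and is essentially the route taken in \cite{HFR}: the gauge change $\psi = e^{-\lambda x}u$ reduces the principal-eigenvalue problem to a quasi-periodic Hill equation with piecewise constant potential, the transfer-matrix trace formula you wrote is correct, and the $L\to\infty$ asymptotic $\frac{1}{L}\log\operatorname{tr}T_L(m)\to j(m)$ is exactly what drives the limit. Your identification of the two delicate points is also accurate: the degeneration at $m=\mu^+$ and the need to exclude the oscillatory regime $m<\mu^+$ for the minimizer. In \cite{HFR} the second issue is handled by showing directly that the limiting quotient $j^{-1}(\lambda)/\lambda$ tends to $+\infty$ as $\lambda\downarrow(1-\theta)\sqrt{\mu^+-\mu^-}$ (since $j^{-1}$ stays bounded below by $\mu^+$ there), which together with concavity forces the minimizer to remain in a compact subset of the open range. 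Your sketch contains all the right ingredients; what remains is the bookkeeping to make the local-uniform convergence and the compactness of $\lambda(\mu_L)$ quantitative.
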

\begin{proof}[Proof of Theorem \ref{th:L->infty}]
Fix $\beta>0$ and $p\in(1,\infty)$. Let $\mu_{\theta}:=\beta^{1/p}\theta^{-/p}$ and
\[
b_{\theta,L}(x):=b_{\theta}(x/L),\ \
b_{\theta}(x):=\mu_{\theta}\sum_{k\in\Z}\chi_{I_k}(x)\ \ (I_k:=k+[0,\theta]).
\]
Then by \eqref{eq:L->infty},
\[
c^*_{\infty}(\theta):=\lim_{L\rightarrow\infty}\,c^*(b_{\theta,L})
=\min_{\lambda\geq(1-\theta)\sqrt{\mu_{\theta}}}\frac{j^{-1}(\lambda)}{\lambda}
=\frac{m_{\theta}}{j(m_{\theta})},
\]
where
\[
\begin{split}
&j(m)=\theta\sqrt{m-\mu_{\theta}}+(1-\theta)\sqrt{m},\\
&m_{\theta}:=\frac{8\mu_{\theta}\theta^2}
{3\theta^2+2\theta-1+(1-\theta)\sqrt{9\theta^2-2\theta+1}}.
\end{split}
\]
Thus the function $c^*_{\infty}(\theta)$ is continuous in $(0,1]$.
Moreover from
\[
\begin{split}
m_{\theta}&=\frac{8\mu_{\theta}\theta^2}{8\theta^2+O(\theta^4)}
=(1+O(\theta^2))\mu_{\theta},\\
j(m_{\theta})&=\theta\sqrt{m_{\theta}-\mu_{\theta}}+(1-\theta)\sqrt{m_{\theta}}
=(1+O(\theta))\sqrt{\mu_{\theta}}
\end{split}
\ \ {\rm as}\ \ \theta\rightarrow0,
\]
\[
c^*_{\infty}(\theta)=\frac{(1+O(\theta^2))\mu_{\theta}}{(1+O(\theta))\sqrt{\mu_{\theta}}}
=\frac{(1+O(\theta^2))}{(1+O(\theta))}\sqrt{\mu_{\theta}}
\rightarrow\infty
\ \ {\rm as}\ \ \theta\rightarrow0.
\]
Therefore for any $M>0$ and any $\ep\in(0,1)$, there is an $L_0>0$ such that
for any $L\geq L_0$,
\begin{equation}\label{eq:L->infty 1}
\max_{\theta\in[\ep,1]}c^*(b_{\theta,L})<\max_{\theta\in(0,1]}c^*(b_{\theta,L}),\ \
M<\max_{\theta\in(0,1]}c^*(b_{\theta,L}).
\end{equation}
In fact, by the monotonicity of $c^*(b_{\theta,L})$ in $L.>0$,
\begin{equation}\label{eq:c^* leq c^*_{infty}}
c^*(b_{\theta,L})\leq c^*_{\infty}(\theta)\ \ (\theta\in(0,1],\ L>0).
\end{equation}
By $\underset{\theta\rightarrow0}{\lim}\, c^*_{\infty}(\theta)=\infty$ and
$\underset{L\rightarrow\infty}{\lim}\,c^*(b_{\theta,L})=c^*_{\infty}(\theta)$ $(\theta\in(0,1])$,
for any $M.>0$ and $\ep\in(0,1)$, there are $\ep_0\in(0,\ep)$ and $L_0>0$ such that
for any $L\geq L_0$,
\[
\begin{split}
\max_{\theta\in[\ep,1]}c^*(b_{\theta,L})&\leq
\max_{\theta\in[\ep,1]}c^*_{\infty}(\theta)<c^*(b_{\ep_0},L)\leq c^*_{\infty}(\ep_0),\\
M&<c^*(b_{\ep_0,L})\leq\max_{\theta\in(0,1]}c^*(b_{\theta,L}).
\end{split}
\]
Thus \eqref{eq:c^* leq c^*_{infty}} holds.
Hence if we take $\theta(L)\in(0,1]$ such that
$c^*(b_{\theta(L),L})=\underset{\theta\in(0,1]}{\max}\,c^*(b_{\theta,L})$,
then by \eqref{eq:L->infty 1},
for any $M>0$ and any $\ep\in(0,1)$,
\[
0\leq\lim_{L\rightarrow\infty}\theta(L)<\ep,\ \ M\leq\lim_{L\rightarrow\infty}c^*(b_{\theta,L}).
\]
Thus
$\lim_{L\rightarrow\infty}\theta(L)=0,\ \
\lim_{L\rightarrow\infty}c^*(b_{\theta,L})=\infty$.
This completes the proof.
\end{proof}
\subsection{The case $\beta\rightarrow\infty$}
\begin{proof}[Proof of Theorem \ref{th:beta->infty}]
Let
\[
b_{\theta,L}(x):=\sum_{k\in\Z}\theta^{-1/p}\chi_{I_k}(x/L)\ \ (I_k:=k+[0,\theta]).
\]
Then by \eqref{eq:rescaling},
\[
\begin{split}
&k(\lambda,\beta^{1/p} b_{\theta,L})
=\beta^{1/p}k\Big(\beta^{-\frac{1}{2p}}\lambda,b_{\theta,\beta^{\frac{1}{2p}}L}\Big).\\
\therefore\ c^*(\beta^{1/p}b_{\theta,L})&=
\min_{\lambda>0}\frac{-k(\lambda,\beta^{1/p}b_{\theta,L})}{\lambda}
=\beta^{\frac{1}{2p}}\min_{\lambda>0}
\frac{-k\Big(\beta^{-\frac{1}{2p}}\lambda,b_{\theta,\beta^{\frac{1}{2p}}L}\Big)}
{\beta^{-\frac{1}{2p}}\lambda}\\&
=\beta^{\frac{1}{2p}}c^*\Big(b_{\theta,\beta^{\frac{1}{2p}}L}\Big).
\end{split}
\]
Therefore if we take $\theta(\beta)\in(0,1]$ such that
\[
c^*(\beta^{1/p}b_{\theta(\beta),L})=\max_{\theta\in(0,1]}c^*(b_{\theta,L}),
\]
then
\[
c^*\Big(b_{\theta(\beta),\beta^{\frac{1}{2p}}L}\Big)=
\max_{\theta\in(0,1]} c^*\Big(b_{\theta,\beta^{\frac{1}{2p}}L}\Big)
\]
and by Theorem \ref{th:L->infty},
\[
\theta(\beta)\rightarrow0,\ \
c^*(\beta^{1/p}b_{\theta(\beta),L})/\beta^{\frac{1}{2p}}
=\max_{\theta\in(0,1]} c^*\Big(b_{\theta,\beta^{\frac{1}{2p}}L}\Big)\rightarrow\infty
\ \ {\rm as}\ \ \beta\rightarrow\infty.
\]
This completes the proof.
\end{proof}
\appendix
\section{G$\hat{\rm a}$teaux derivative of $k(\lambda,b)$}
In this appendix, we calculate the derivative of the principal eigenvalue.

\vskip 6pt

\noindent
{\bf Lemma \ref{lm:smoothness of k}} (G$\hat{\rm a}$teaux derivative of $k(\lambda,b)$){\bf .}
{\it Fix} $p\in[1,\infty)$. {\it The maps}
\[
\begin{split}
&\R\times L^p_{L\mbox{\rm\tiny-per}}\ni(\lambda,b)\mapsto k(\lambda,b)\in\R,\\
&\R\times L^p_{L\mbox{\rm\tiny-per}}\ni
(\lambda,b)\mapsto \psi_{\lambda,b},\widetilde{\psi}_{\lambda,b}\in W^{2,p}_{L\mbox{\rm\tiny-per}}
\end{split}
\]
{\it are analytic and}
\begin{equation}\tag{\ref{eq:derivative of k}}
\begin{split}
\partial_b k(\lambda,b)[v]
&:=\lim_{h\rightarrow0}\frac{k(\lambda,b+h v)-k(\lambda,b)}{h}\\
&=-\langle\psi_{\lambda,b}\widetilde{\psi}_{\lambda,b},v\rangle
=-\int_0^L\psi_{\lambda,b}\widetilde{\psi}_{\lambda,b}v d x.
\end{split}
\end{equation}
\begin{proof}[Proof of Lemma \ref{lm:smoothness of k}]
Fix
$\lambda_{0}\in{\mathbb R}$,
$b_{0}\in L^{p}_{L\mbox{\tiny-per}}$
arbitrarily, and put
$k_{0}=k(\lambda_{0},b_{0})$,
$\psi_{0}=\psi_{\lambda_{0},b_{0}}$
and
$\widetilde{\psi}_{0}=\widetilde{\psi}_{\lambda_{0},b_{0}}$.
Let
$\epsilon>0$
be a constant such that
$(B_{\epsilon}(k_{0})\backslash\{k_{0}\})
\cap\sigma(-L_{\lambda_{0},b_{0}})=\emptyset$
and
\[
R_{\lambda,b}(\mu):=(\mu I+L_{\lambda,b})^{-1}
\ \mbox{ for }\mu\in\rho(-L_{\lambda,b})
={\mathbb C}\backslash\sigma(-L_{\lambda,b}),
\]
where
$B_{\epsilon}(k_{0})
:=\{\mu\in{\mathbb C}\mid |\mu-k_{0}|\leq\epsilon\}$
and
$\sigma(-L_{\lambda_{0},b_{0}})
:=\{\mu\in{\mathbb C}\mid
\mu I+L_{\lambda_{0},b_{0}}:
W^{2,p}_{L\mbox{\tiny-per}}\otimes{\mathbb C}
\rightarrow
L^{p}_{L\mbox{\tiny-per}}\otimes{\mathbb C}\
\mbox{is not invertible}\}$.

Then
\[
\begin{split}
&\psi_{0}\otimes\widetilde{\psi}_{0}
=\frac{1}{2\pi i}\int_{\partial B_{\epsilon}(k_{0})}
R_{\lambda_{0},b_{0}}(\mu)d\mu,\\
&k_{0}\psi_{0}\otimes\widetilde{\psi}_{0}
=\frac{1}{2\pi i}\int_{\partial B_{\epsilon}(k_{0})}\mu
R_{\lambda_{0},b_{0}}(\mu)d\mu,
\end{split}
\]
where the map
$\psi_{0}\otimes\widetilde{\psi}_{0}:
L^{p}_{L\mbox{\tiny-per}}\rightarrow L^{p}_{L\mbox{\tiny-per}}$
is defined by
$\phi\mapsto\langle\widetilde{\psi}_{0},\phi\rangle\,\psi_{0}$.

Moreover if we put
$R_{\lambda_{0},b_{0}}=R_{\lambda_{0},b_{0}}(\mu)$,
$R_{\lambda,b}=R_{\lambda,b}(\mu)$,
then
\begin{equation}\label{eq:Taylor}
\begin{split}
R_{\lambda,b}
=&R_{\lambda_{0},b_{0}}
\Big\{I-\Big((b_{0}-b+\lambda_{0}^{2}-\lambda^{2})I
+2(\lambda_{0}-\lambda)\frac{d}{d x}\Big)R_{\lambda_{0},b_{0}}
\Big\}^{-1}\\
=&R_{\lambda_{0},b_{0}}
\sum_{n=0}^{\infty}
\Big\{\Big((b_{0}-b+\lambda_{0}^{2}-\lambda^{2})I
+2(\lambda_{0}-\lambda)\frac{d}{d x}\Big)R_{\lambda_{0},b_{0}}
\Big\}^{n}
\end{split}
\end{equation}
in
$U_{\lambda_{0},b_{0}}
:=\{(\lambda,b)\in{\mathbb R}\times
L^{p}_{L\mbox{\tiny-per}}\mid
\|b-b_{0}\|+2|\lambda-\lambda_{0}|
+|\lambda^{2}-\lambda_{0}^{2}|<r_{0}\}$,
where
\[
r_{0}:=1/
\underset{\mu\in\partial B_{\epsilon}(k_{0})}{\max}
\|R_{\lambda_{0},b_{0}}(\mu)\|_{\mathcal{L}(L^{p}_{L\mbox{\tiny-per}},
W^{2,p}_{L\mbox{\tiny-per}})},
\]
\[
\begin{split}
&\mathcal{L}(X,Y)
:=\{L:X\rightarrow Y\mid L\mbox{ is a bounded linear operator}\},\\
&\|L\|_{\mathcal{L}(X,Y)}
:=\sup_{\phi\in X\backslash\{0\}}\|L\phi\|_{Y}/\|\phi\|_{X}
\end{split}
\]
for vector spaces
$X$, $Y$
with norms
$\|\cdot\|_{X}$, $\|\cdot\|_{Y}$,
respectively.

Hence
\[
\begin{split}
&\psi_{\lambda,b}\otimes\widetilde{\psi}_{\lambda,b}
=\frac{1}{2\pi i}\int_{\partial B_{\epsilon}(k_{0})}
R_{\lambda,b}(\mu)d\mu
\in\mathcal{L}(L^{p}_{L\mbox{\tiny-per}},
W^{2,p}_{L\mbox{\tiny-per}}),\\
&k(\lambda,b)
\psi_{\lambda,b}\otimes\widetilde{\psi}_{\lambda,b}
=\frac{1}{2\pi i}\int_{\partial B_{\epsilon}(k_{0})}\mu
R_{\lambda,b}(\mu)d\mu
\in\mathcal{L}(L^{p}_{L\mbox{\tiny-per}},
W^{2,p}_{L\mbox{\tiny-per}})
\end{split}
\]
are analytic in
$U_{\lambda_{0},b_{0}}$.

Thus the map
\[
U_{\lambda_{0},b_{0}}\ni(\lambda,b)\mapsto
\langle\widetilde{\psi}_{\lambda,b},\psi_{0}\rangle\psi_{\lambda,b}
\in W^{2,p}_{L\mbox{\tiny-per}}
\]
is analytic. Hence by $\langle\widetilde{\psi}_{\lambda,b},\psi_{0}\rangle\psi_{\lambda,b}>0$
for $(\lambda,b)\in U_{\lambda_{0},b_{0}}$, the map
\[
U_{\lambda_{0},b_{0}}\ni(\lambda,b)\mapsto\frac{\langle\widetilde{\psi}_{\lambda,b},\psi_{0}\rangle}
{\|\langle\widetilde{\psi}_{\lambda,b},\psi_{0}\rangle\psi_{\lambda,b}\|_2}\psi_{\lambda,b}
=\psi_{\lambda,b}
\in W^{2,p}_{L\mbox{\tiny-per}}
\]
is analytic.
Therefore the maps
$U_{\lambda_{0},b_{0}}\ni(\lambda,b)\mapsto
\widetilde{\psi}_{\lambda,b}\in W^{2,p}_{L\mbox{\tiny-per}}$
and
$U_{\lambda_{0},b_{0}}\ni(\lambda,b)\mapsto
k(\lambda,b)\in{\mathbb R}$
are analytic.

Put
$\partial\psi_{0}
:=\frac{d}{d \eta}\psi_{\lambda_{0},b_{0}+\eta v}\big|_{\eta=0}$
and
$\partial\widetilde{\psi}_{0}
:=\frac{d}{d \eta}\widetilde{\psi}_{\lambda_{0},b_{0}+\eta v}
\big|_{\eta=0}$,
then by
$\langle\psi_{\lambda,b},\widetilde{\psi}_{\lambda,b}\rangle=1$,
\[
0=\frac{d}{d \eta}\langle\psi_{\lambda_{0},b_{0}+\eta v},
\widetilde{\psi}_{\lambda_{0},b_{0}+\eta v}\rangle\Big|_{\eta=0}
=\langle \partial\psi_{0},\widetilde{\psi}_{0}\rangle
+\langle\psi_{0},\partial\widetilde{\psi}_{0}\rangle.
\]
By \eqref{eq:Taylor},
\[
R_{\lambda_{0},b_{0}+\eta v}-R_{\lambda_{0},b_{0}}
=-\eta R_{\lambda_{0},b_{0}}(v I)R_{\lambda_{0},b_{0}}
+O(\eta^{2}).
\]
Hence
\[
\begin{split}
\partial_{b}k(\lambda_{0},b_{0})[v]
=&\partial_{b}k(\lambda_{0},b_{0})[v]
+k_{0}\langle\widetilde{\psi}_{0},\partial\psi_{0}\rangle
+k_{0}\langle \partial\widetilde{\psi}_{0},\psi_{0}\rangle\\
=&\frac{d}{d \eta}\big(k(\lambda_{0},b_{0}+\eta v)
\langle\widetilde{\psi}_{0},
\psi_{\lambda_{0},b_{0}+\eta v}
\rangle
\langle\widetilde{\psi}_{\lambda_{0},b_{0}+\eta v},
\psi_{0}\rangle\big)\Big|_{\eta=0}\\
=&
\Big\langle\widetilde{\psi}_{0},
\frac{-1}{2\pi i}\int_{\partial B_{\epsilon}(k_{0})}\mu
R_{\lambda_{0},b_{0}}(\mu)(v I)R_{\lambda_{0},b_{0}}(\mu)\psi_{0}d\mu\Big\rangle\\
=&-\langle\psi_{0}\widetilde{\psi}_{0},v\rangle.
\end{split}
\]
\end{proof}
\section{Projection onto the constraint manifold}
In this appendix, we prove the following lemma.

\vskip 6pt

\noindent
{\bf Lemma \ref{lm:projection}} (Projection operator){\bf.}
{\it Put} $b_0:=f^{-1}(\beta)$, $\ep_0:=\|b_0\|_{\infty}=f^{-1}(\beta)$ {\it and}
\[
B_{\ep_0}:=\Big\{b_0+v\,\Big|\,
v\in L^{\infty}_{L\mbox{\rm\tiny-per}},\ \int_0^L v dx=0,\ \|v\|_{\infty}<\ep_0
\Big\}.
\]
{\it Let} $P$ {\it be a map defined as follows}:
\[
B_{\ep_0}\ni b\mapsto P(b):=\mu(b)b\in A_f,
\]
{\it where} $\mu=\mu(b)$ {\it is a real number with}
$\int_0^L f(\mu b)dx=\beta L$. {\it Then}
\begin{equation}\tag{\ref{eq:projection}}
\begin{split}
P(b_0+v)=&b_0+v-\frac{f''(b_0)}{2f'(b_0)L}\|v\|_2^2+o(\|v\|_2^2)\\
&{\rm as}\ \ B_{\ep_0}\ni b_0+v\rightarrow b_0\ \ {\rm in}\ \ L^{\infty}.
\end{split}
\end{equation}
\begin{lemma}\label{lm:mu}
Let $b_0:=f^{-1}(\beta)$ and
\[
B:=\Big\{b_0+v\,\Big|\,v\in L^{\infty}_{L\mbox{\rm\tiny-per}},\
\int_0^L v dx=0,\ \|v\|_{\infty}< b_0\Big\}
\]
Let $\mu=\mu(b)$ be a real number with $\int_0^L f(\mu b)dx=\beta L$
for $b=b_0+v\in B$. Then for $b=b_0+v\in B$,
\begin{equation}\label{eq:mu}
\mu(b)=1-\frac{f''(b_0)}{2f'(b_0)b_0 L}\|v\|_2^2+o(\|v\|_2^2)
\ \ {\rm as}\ \ b\rightarrow b_0\ \ {\rm in}\ \ L^{\infty}.
\end{equation}
\end{lemma}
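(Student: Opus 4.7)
The plan is to regard $\int_0^L f(\mu(b_0+v))\,dx = \beta L$ as an implicit equation determining $\mu$ in terms of $v$, and to extract the quadratic expansion by Taylor-expanding $f$ around the constant value $\mu b_0$ while crucially exploiting the mean-zero condition $\int_0^L v\,dx = 0$. First I would confirm existence and uniqueness of $\mu(b)$ from the strict monotonicity of $\mu \mapsto \int_0^L f(\mu(b_0+v))\,dx$ on $(0,\infty)$ (valid because $f$ is strictly increasing and $b_0+v > 0$ for $b = b_0+v \in B$), together with continuous dependence on $v$ in $L^\infty$, which yields $\mu(b) \to 1$ as $b \to b_0$ in $L^\infty$.

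Next, writing $\mu = 1+s$, I would apply the second-order Taylor expansion
\[
f(\mu(b_0+v)(x)) = f(\mu b_0) + \mu f'(\mu b_0)\,v(x) + \tfrac{1}{2}\mu^2 f''(\mu b_0)\,v(x)^2 + r(x),
\]
with Lagrange-form remainder $r(x) = \tfrac{1}{2}\mu^2\bigl(f''(\xi(x)) - f''(\mu b_0)\bigr)\,v(x)^2$ for some $\xi(x)$ between $\mu b_0$ and $\mu(b_0+v(x))$. Integrating this identity and invoking $\int_0^L v\,dx = 0$ to kill the linear term, the defining equation reduces to the scalar balance
\[
L\bigl(f(\mu b_0) - f(b_0)\bigr) + \tfrac{1}{2}\mu^2 f''(\mu b_0)\|v\|_2^2 + \int_0^L r(x)\,dx = 0.
\]
Expanding $f(\mu b_0) - f(b_0) = f'(b_0)b_0\,s + O(s^2)$ and using $\mu \to 1$, $f''(\mu b_0) \to f''(b_0)$ then reduces the problem to an algebraic equation in $s$ whose leading-order balance should dictate the claimed formula.

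The main obstacle is justifying that $\int_0^L r(x)\,dx = o(\|v\|_2^2)$, rather than merely $o(\|v\|_\infty^2)$; the weaker $L^\infty$-type bound would be too crude, since it only gives $O(L\|v\|_\infty^2)$ and need not be $o(\|v\|_2^2)$. The key observation is to keep the factor $v(x)^2$ inside the integral, yielding
\[
\Bigl|\int_0^L r(x)\,dx\Bigr| \leq \tfrac{1}{2}\mu^2 \sup_{x}\bigl|f''(\xi(x)) - f''(\mu b_0)\bigr|\cdot\|v\|_2^2,
\]
where the supremum tends to $0$ as $\|v\|_\infty, |s| \to 0$ by the uniform continuity of $f''$ near $b_0$. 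Finally, a one-step bootstrap closes the argument: the balance above first yields $s = O(\|v\|_2^2)$ (dividing by the nonzero coefficient $Lf'(b_0)b_0$), after which the $O(s^2) = O(\|v\|_2^4)$ term is absorbed into $o(\|v\|_2^2)$, producing
\[
s = -\frac{f''(b_0)}{2 f'(b_0) b_0 L}\|v\|_2^2 + o(\|v\|_2^2),
\]
which is exactly \eqref{eq:mu}.
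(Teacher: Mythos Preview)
Your argument is correct. The paper proceeds differently: it introduces an auxiliary scalar parameter, setting $b_\ep:=b_0+\ep v$ and $\mu(\ep):=\mu(b_\ep)$, differentiates the constraint $\int_0^L f(\mu b_\ep)\,dx=\beta L$ in $\ep$ to obtain explicit expressions for $\mu'(\ep)$ and $\mu''(\ep)$, and then recovers $\mu(1)$ via the Taylor formula $\mu(1)=\mu(0)+\mu'(0)+\int_0^1(1-\ep)\mu''(\ep)\,d\ep$. Your approach instead Taylor-expands $f$ pointwise around $\mu b_0$, integrates, and solves the resulting scalar balance for $s=\mu-1$ by a single bootstrap. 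Both routes hinge on the same two points---the mean-zero condition $\int_0^L v\,dx=0$ killing the first-order term, and the remainder being $o(\|v\|_2^2)$ rather than merely $o(\|v\|_\infty^2)$ (the paper isolates this as a separate lemma, while you handle it inline via uniform continuity of $f''$). Your version is somewhat more direct since it avoids the $\ep$-parametrization and the second differentiation; the paper's version, on the other hand, makes the dependence on $\ep$ explicit and perhaps tracks the error terms a bit more mechanically.
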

\begin{proof}[Proof of Lemma \ref{lm:projection}]
Let $P$ be the map defined as in Lemma \ref{lm:projection}. Then by Lemma \ref{lm:mu},
for $b=b_0+v\in B$,
\[
\begin{split}
P(b)&=\mu(b)b=\Big\{1-\frac{f''(b_0)}{2f'(b_0)b_0 L}\|v\|_2^2+o(\|v\|_2^2)\Big\}(b_0+v)\\
&=b_0+v-\frac{f''(b_0)}{2f'(b_0)L}\|v\|_2^2+o(\|v\|_2^2).
\end{split}
\]
This completes the proof.
\end{proof}
\begin{lemma}\label{lm:small order}
Let $g$ be a continuous function with $g(0)=0$. Then
\[
\lim_{\|v\|_{\infty}\rightarrow0}
\frac{\int_0^L g(v)v^2 dx}{\|v\|_2^2}=0.
\]
\end{lemma}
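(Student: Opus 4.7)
The plan is to exploit the continuity of $g$ at $0$ to bound $|g(v(x))|$ uniformly in $x$ by a small constant whenever $\|v\|_{\infty}$ is small, and then factor this uniform bound out of the integral.

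More precisely, fix $\varepsilon>0$. By the continuity of $g$ at $0$ together with $g(0)=0$, there exists $\delta=\delta(\varepsilon)>0$ such that $|g(s)|<\varepsilon$ whenever $|s|<\delta$. Now suppose $v\in L^{\infty}_{L\text{-per}}$ with $\|v\|_{\infty}<\delta$. Then for almost every $x\in[0,L]$ we have $|v(x)|\leq\|v\|_{\infty}<\delta$, so $|g(v(x))|<\varepsilon$. Hence
\[
\Big|\int_0^L g(v)v^2\,dx\Big|
\leq \int_0^L |g(v)|\,v^2\,dx
\leq \varepsilon\int_0^L v^2\,dx
=\varepsilon\,\|v\|_2^2.
\]
Assuming $v\not\equiv 0$, dividing by $\|v\|_2^2>0$ gives
\[
\frac{|\int_0^L g(v)v^2\,dx|}{\|v\|_2^2}\leq \varepsilon.
\]
Since $\varepsilon>0$ was arbitrary, this proves the claimed limit.

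There is essentially no obstacle here: the statement is a direct consequence of the continuity of $g$ at the origin, since multiplying by $v^2$ on both sides of a pointwise bound for $|g(v)|$ and integrating reproduces $\|v\|_2^2$ exactly. The only mild subtlety is the trivial case $v\equiv 0$, which is handled by the convention that the limit is taken through nonzero $v$ with $\|v\|_{\infty}\to 0$ (otherwise the ratio is undefined), so no special argument is required.
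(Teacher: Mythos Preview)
Your proof is correct and follows essentially the same approach as the paper: both bound $|g(v(x))|$ uniformly by $\sup_{|t|\leq\|v\|_\infty}|g(t)|$, factor this out of the integral to recover $\|v\|_2^2$, and then use continuity of $g$ at $0$ to conclude. The only cosmetic difference is that the paper writes the bound directly via the supremum, while you phrase it in $\varepsilon$--$\delta$ language.
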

\begin{proof}
By continuity of $g$,
\[
\begin{split}
\Big|\frac{\int_0^L g(v)v^2 dx}{\|v\|_2^2}\Big|
&\leq\frac{\int_0^L |g(v)|v^2 dx}{\|v\|_2^2}\\
&\leq \sup_{|t|\leq\|v\|_{\infty}}|g(t)|\rightarrow0\ \
{\rm as}\ \ \|v\|_{\infty}\rightarrow0.
\end{split}
\]
\end{proof}
\begin{proof}[Proof of Lemma \ref{lm:mu}]
Fix $\ep_1\in(0,b_0)$ and put
\[
\begin{split}
m&:=\underset{\tiny
\begin{array}{c}
\|v\|_{\infty}\leq\ep_1,\\
\int_0^L v dx=0
\end{array}}{\inf}\, \mu(b_0+v)\,\inf_{x}\,(b_0+v(x)),\\
M&:=\underset{\tiny
\begin{array}{c}
\|v\|_{\infty}\leq\ep_1,\\
\int_0^L v dx=0
\end{array}}{\sup}\, \mu(b_0+v)\,\sup_{x}\,(b_0+v(x)).
\end{split}
\]
Fix $v\in L^{\infty}_{L\mbox{\tiny-per}}$
with $\int_0^L v dx=0$, $\|v\|_{\infty}\leq \ep_1$.
Put $b_{\ep}:=b_0+\ep v$ and $\mu=\mu(\ep):=\mu(b_{\ep})$.
Then by differentiating both sides of $\int_0^L f(\mu b_{\ep})=\beta L$ with respect to $\ep$,
\begin{equation}\label{eq:mu' 1}
\mu'\int_0^L f'(\mu b_\ep) dx+\mu\int_0^L f'(\mu b_{\ep})b_{\ep}dx=0.
\end{equation}
By $\int_0^L v dx=0$,
\begin{equation}\label{eq:1}
\begin{split}
\Big|\int_0^L f'(\mu b_{\ep}) v dx\Big|
&=\Big|\int_0^L\{f'(\mu b_{\ep})-f'(\mu b_0)\}v dx\Big|\\
&\leq \sup_{m\leq t\leq M}|f''(t)|\,\mu\ep\|v\|_2^2=O(\ep\|v\|_2^2).
\end{split}
\end{equation}
Hence by \eqref{eq:mu' 1},
\begin{equation}\label{eq:mu' 2}
\mu'(\ep)=\frac{-\mu\int_0^L f'(\mu b_{\ep}) v dx}{\int_0^L f'(\mu b_{\ep})b_{\ep}dx}
=O(\|v\|_2^2).
\end{equation}
By $\int_0^L v dx=0$, $\mu(b_0)=1$ and \eqref{eq:mu' 1},
\[
\begin{split}
\Big|&\int_0^L f'(\mu b_{\ep}) b_{\ep} dx-\int_0^L f'(b_0)b_0 dx\Big|\\
&\leq \Big|\int_0^L \{f'(\mu b_{\ep}) -f'(\mu b_0)\}b_{\ep} dx\Big|
+\ep\Big|\int_0^L \{f'(\mu b_0) -f'(b_0)\} b_0 dx\Big|
=O(\|v\|_2).
\end{split}
\]
\begin{equation}\label{eq:2}
\therefore\ \int_0^L f'(\mu b_{\ep}) b_{\ep} dx=\int_0^L f'(b_0)b_0 dx+O(\|v\|_2).
\end{equation}
Differentiating both sides of \eqref{eq:mu' 1},
\[
\mu''\int_0^L f'(\mu b_{\ep})b_{\ep} dx+\mu^2\int_0^L f''(\mu b_{\ep})v^2 dx+\mu'R(v)=0,
\]
where
\[
\begin{split}
R(v)&:=
2\int_0^L f'(\mu b_{\ep})v dx+\mu'\int_0^L f''(\mu b_{\ep})b_{\ep}^2 dx
+\mu\int_0^L f''(\mu b_{\ep})b_{\ep}v dx\\
&=O(\|v\|_2).
\end{split}
\]
Hence by \eqref{eq:mu' 2},
\begin{equation}\label{eq:mu'' 1}
\mu''\int_0^L f'(\mu b_{\ep})b_{\ep} dx+\mu^2\int_0^L f''(\mu b_{\ep})v^2 dx+O(\|v\|_2^3)=0.
\end{equation}
On the other hand, it holds that
\begin{equation}\label{eq:3}
\mu^2\int_0^Lf''(\mu b_{\ep})v^2 dx=\int_0^L f''(b_0)v^2 dx+o(\|v\|_2^2).
\end{equation}
In fact,
\[
\mu^2\int_0^L f''(\mu b_{\ep})v^2 dx=\int_0^L f''(b_0)v^2dx+{\rm I}+{\rm II}+{\rm III},
\]
where
\[
\begin{split}
{\rm I}:=&(\mu-1)\int_0^L f''(\mu b_{\ep})v^2 dx,\ \
{\rm II}:=\int_0^L \{f''(\mu b_{\ep})-f''(b_{\ep})\}v^2 dx,\\
{\rm III}:=&\int_0^L\{f(b_{\ep})-f(b_0)\}v^2dx.
\end{split}
\]
From Lemma \ref{lm:small order},
\[
{\rm III}=o(\|v\|_2^2).
\]
By $\mu(b_0)=1$ and \eqref{eq:mu' 2},
\[
{\rm I}=O(\|v\|_2^4),\ \
|{\rm II}|\leq\sup_{\tiny
\begin{array}{c}
m\leq s\leq M,\\
|t|\leq O(\|v\|_2^2)
\end{array}}
|f''(s+t)-f''(s)|\,\|v\|_2^2=o(\|v\|_2^2).
\]
Thus \eqref{eq:3} holds. Hence by \eqref{eq:2}, \eqref{eq:mu'' 1} and \eqref{eq:3},
\[
\begin{split}
\mu''(\ep)&
=\frac{-\mu^2\int_0^L f''(\mu b_{\ep})v^2dx+O(\|v\|_2^3)}
{\int_0^L f'(\mu b_{\ep})b_{\ep} dx}
=\frac{-\int_0^L f''(b_0)v^2 dx+o(\|v\|_2^2)}{\int_0^L f'(b_0)b_0 dx+O(\|v\|_2)}\\
&=\frac{-\int_0^L f''(b_0)v^2 dx}{\int_0^L f'(b_0)b_0 dx}+o(\|v\|_2^2)
=\frac{-f''(b_0)}{f'(b_0)b_0 L}\|v\|_2^2+o(\|v\|_2^2).
\end{split}
\]
Therefore
\[
\begin{split}
\mu(b_0+v)&=\mu(1)=\mu(0)+\mu'(0)+\int_0^L (1-\ep)\mu''(\ep)d\ep\\
&=1-\frac{f''(b_0)}{2f'(b_0)b_0 L}\|v\|_2^2+o(\|v\|_2^2).
\end{split}
\]
\end{proof}
\begin{center}
\acknowledgments
\end{center}

The authors would like to thank Prof. Matano for many helpful suggestions and
continuous encouragement.

\bibliographystyle{amsalpha}

\end{document}